\documentclass[11pt,a4paper]{article}

\usepackage[T1]{fontenc}
\usepackage[utf8]{inputenc}
\usepackage{lmodern}
\usepackage[margin=25mm]{geometry}
\usepackage{amsmath,amssymb,amsthm}
\usepackage{array}
\usepackage{microtype}
\usepackage[numbers,sort&compress]{natbib}
\usepackage[hidelinks]{hyperref}

\newcommand{\WLAncillaryMaterials}{}

\numberwithin{equation}{section}

\theoremstyle{plain}
\newtheorem{theorem}{Theorem}[section]
\newtheorem{lemma}[theorem]{Lemma}
\newtheorem{proposition}[theorem]{Proposition}
\newtheorem{corollary}[theorem]{Corollary}
\theoremstyle{definition}
\newtheorem{definition}[theorem]{Definition}
\theoremstyle{remark}
\newtheorem{remark}[theorem]{Remark}

\newcommand{\rw}{\operatorname{rw}}
\newcommand{\lrw}{\operatorname{lrw}}
\newcommand{\ebw}{\operatorname{ebw}}

\newcommand{\BP}{\mathrm{BP}}
\newcommand{\Clog}{\mathsf{C}}

\title{Decomposition Profiles and Weisfeiler-Leman Dimension for Graphs of Bounded Rank Width}
\author{Antonios Kalampakas\\[0.4em]
  \small College of Engineering and Technology\\
  \small American University of the Middle East\\
  \small Egaila 54200, Kuwait\\[0.4em]
  \small \href{mailto:antonios.kalampakas@aum.edu.kw}{\texttt{antonios.kalampakas@aum.edu.kw}}\\
  \small \href{https://orcid.org/0000-0001-8821-6102}{ORCID: 0000-0001-8821-6102}}
\date{}

\hypersetup{
  pdftitle={Decomposition Profiles and Weisfeiler-Leman Dimension for Graphs of Bounded Rank Width},
  pdfauthor={Antonios Kalampakas},
  pdfsubject={Graph theory and finite model theory},
  pdfkeywords={Weisfeiler-Leman algorithm, graph isomorphism, decomposition profiles, rank-width, linear rank-width, finite model theory, counting logic, bijective pebble game, Cai-Furer-Immerman graphs}}

\begin{document}

\maketitle
\begin{abstract}
The Weisfeiler-Leman (WL) algorithm tests graph isomorphism by assigning colours to $d$-tuples of vertices and iteratively refining the colouring according to neighbourhood configurations until it stabilizes. Increasing $d$ allows the test to detect finer structural differences between graphs, but the computational cost grows exponentially with $d$. The central question is which dimension allows us to distinguish nonisomorphic graphs. We relate a sufficient such dimension to the structure of a graph $G$ through a rank decomposition over $\mathbb F_2$. The two parameters governing the bound are the maximum cut rank $w$ and the fork load $\ell$, which is the largest sum of the parent and two child cut ranks at a fork in the decomposition tree. We prove that WL in dimension $\max\{\ell+1,2w+2\}$ distinguishes every finite nonempty vertex-coloured graph $G$ with such a decomposition from every graph not isomorphic to $G$. Consequently, for $k\geq1$, graphs of rank width at most $k$ can be distinguished from every nonisomorphic graph in dimension $3k+1$, graphs of linear rank width at most $k$ in dimension $2k+2$, and edgeless graphs in dimension one. Uncoloured Cai-F{\"u}rer-Immerman (CFI) constructions give linear lower bounds for both width parameters, so the upper bounds are tight up to constant factors.
\end{abstract}

\noindent\textit{Keywords:} Weisfeiler-Leman algorithm, graph isomorphism,
decomposition profiles, rank-width, linear rank-width, finite model theory,
counting logic, bijective pebble game, Cai-F{\"u}rer-Immerman graphs.

\section{Introduction}
\label{sec:introduction}

The Weisfeiler-Leman procedure is a central bridge between graph
isomorphism, finite model theory, and structural graph theory. Its dimension
measures how many vertices must be considered simultaneously to distinguish a
graph from every nonisomorphic graph. The $d$-dimensional procedure has
equivalent formulations through the $(d+1)$-variable fragment of counting
logic and the bijective $(d+1)$-pebble game. Bounds on the required dimension
therefore yield both combinatorial identification results and bounded-variable
definability statements.

Rank width is particularly suited to this question. A rank decomposition
controls every cut by the rank over $\mathbb F_2$ of its bipartite adjacency
matrix, although the corresponding interface need not be a small separator in
the usual graph-theoretic sense. Oum and Seymour introduced rank width within a
general decomposition framework for dense graphs \cite{OumSeymour2006}.
Grohe and Schweitzer gave the first polynomial-time isomorphism test for every
class of bounded rank width \cite{GroheSchweitzer2015}. Grohe and Neuen later
developed a source-guided method based on split pairs, colour refinement, and
flips \cite{GroheNeuen2023}. Their published result shows that
$(3k+4)$-dimensional Weisfeiler-Leman identifies every graph of rank width at
most $k$. Corollary~3.12 in the full version of the SWAT paper by Levet,
Rombach, and Sieger improves this unrestricted bound to $3k+3$
\cite{LevetRombachSieger2024Full}.

Our main result responds to the ranks displayed by the particular source
decomposition. For a rooted rank decomposition $\mathcal D$, let
$w(\mathcal D)$ be its maximum cut rank and let $\ell(\mathcal D)$ be the
maximum, over its forks, of the sum of the parent and two child cut ranks.
Writing $\dim_{\mathrm{WL}}(G)$ for the least dimension that identifies $G$,
we prove
\[
  \dim_{\mathrm{WL}}(G)
  \leq
  \max\{\ell(\mathcal D)+1,2w(\mathcal D)+2\}.
\]
Since $\ell(\mathcal D)\leq3w(\mathcal D)$, every graph of rank width at
most $k$ is identified by $(3k+1)$-dimensional Weisfeiler-Leman when
$k\geq1$. A linear layout gives a rooted-comb decomposition with a singleton
child side at each fork, which yields the stronger $(2k+2)$-dimensional bound
for linear rank width at most $k$. Rank-width-zero graphs are edgeless and are
identified by one-dimensional Weisfeiler-Leman. Thus the general rank-width
estimate improves the strongest previous unrestricted bound by two
dimensions. Equivalently, individual graphs in the two classes are definable
in counting logic with $3k+2$ and $2k+3$ variables, respectively. Only the
source graph is equipped with a rank decomposition. The target graph remains
arbitrary throughout the proof.

The proof refines the nice-triple geometry underlying the induction of Grohe
and Neuen. The same three-interface containment appears in the pebble count of
Levet, Rombach, and Sieger
\cite[Section~3.2]{LevetRombachSieger2024Full}. At an internal node, let $S$
be the parent split pair and let $F$ and $T$ be the split pairs for its two
children. These pairs can be chosen so that their distinct entries lie in one
common carrier $P$. If the three cut ranks are $r,r_1,r_2$, then
\[
  |P|\leq r+r_1+r_2.
\]
Entries absent from a child interface lie outside the corresponding child
side. This remoteness property turns the local rank sum into the global
fork-load bound.

The main obstacle is synchronizing component information across the two
graphs. Stable colourings formed from different individualized tuples cannot
be compared by refinement alone. We use canonical common-round colour codes
and attach to each relevant stable colour its component coordinate in the
appropriate flipped graph. The resulting component palettes have the same
meaning on both sides of the pebble game. Two palettes are preannounced in
sequence. Each either exposes a response that violates the prescribed data or
forces a nonisomorphic coloured component for the next child transition. A
canonical forgetful map recovers information defined with a finer tuple from
the palette on the selected remote component. This avoids simultaneous
markers for both children.

At a fork $t$, the carrier, current marker, and response use at most
$\ell_{\mathcal D}(t)+2$ pebbles. A marked split-pair palette or component
formula uses at most $2w(\mathcal D)+3$ variables. The strict-child transition
therefore iterates with
\[
  \max\{\ell(\mathcal D)+2,2w(\mathcal D)+3\}
\]
pebbles. Root, leaf, and finitary closure arguments then give the stated
identification theorem.

The dependence on both width parameters is optimal up to constant factors.
Let $D_{\mathrm{rw}}(r)$ and $D_{\mathrm{lrw}}(r)$ be the maximum
Weisfeiler-Leman dimensions on graphs of rank width and linear rank width at
most $r$. We apply the original Cai-F{\"u}rer-Immerman (CFI) construction
\cite{CaiFurerImmerman1992} in the uncoloured form developed by Neuen
\cite{Neuen2024Homomorphism}. A block-layout analysis on the split grids of
Kiefer and Neuen \cite[Section~6.2]{KieferNeuen2022} gives
\[
  \max\left\{2,\left\lfloor\frac r2\right\rfloor-1\right\}
  \leq D_{\mathrm{lrw}}(r)\leq2r+2
\]
for every $r\geq2$. Factoring cuts through the endpoints of crossing base
edges yields
\[
  \max\left\{3,\left\lfloor\frac r2\right\rfloor\right\}
  \leq D_{\mathrm{rw}}(r)\leq3r+1
\]
for every $r\geq4$. The lower construction uses the fact that the $n$ by $n$
square grid has tree width $n$ \cite[Section~13.2]{Bodlaender1998}. The rook
and Shrikhande graphs provide the constant-three calibration, including
$D_{\mathrm{lrw}}(r)\geq3$ for $r\geq5$. These estimates show that both
extremal dimensions grow linearly.

The standard implementation of Weisfeiler-Leman consequently tests
isomorphism on the rank-width-$k$ class in time $O(n^{3k+2}\log n)$, improving
the previous exponent by two. The broader lower-bound context includes
oddomorphisms for homomorphism indistinguishability \cite{Roberson2026} and
compressed CFI graphs designed for strong
Weisfeiler-Leman lower bounds \cite{GroheLichterNeuenSchweitzer2025}. Our
analysis instead tracks rank width and linear rank width closely enough to
place the indistinguishable pairs inside the corresponding width-bounded
classes.

The decomposition-sensitive estimate also yields structural information after
optimization. Define
\[
  \ell^\star(G)=\min_{\mathcal D}\ell(\mathcal D),
  \qquad
  d^\star(G)=\min_{\mathcal D}
  \max\{\ell(\mathcal D)+1,2w(\mathcal D)+2\}.
\]
We prove
\[
  2\rw(G)\leq\ell^\star(G)\leq3\rw(G),
\]
and $d^\star(G)$ lies between $\ell^\star(G)+1$ and
$\ell^\star(G)+2$. We call $(\ell^\star(G),d^\star(G))$ the optimized
profile pair. Every connected graph $G$ on at least three vertices has, for
each $q\geq1$, a connected induced supergraph $G_q$ with the same rank width
and optimized profile pair, while
\[
  \lrw(G_q)=\max\{\lrw(G),q\}.
\]
Using square grids as base graphs gives, for every $r\geq1$ and
$q\geq r+1$, a connected graph with rank width $r$ and linear rank width $q$
such that
\[
  \ell^\star(G_q)\leq\min\{3r,2r+3\},
  \qquad
  2r+2\leq d^\star(G_q)\leq\min\{3r+1,2r+4\}.
\]
Thus linear rank width can grow without changing the optimized profile.

Both optimized quantities obey exact maximum identities under disjoint union
and nontrivial split composition. The corresponding identity for rank width
is classical \cite[Theorem~4.3]{HlinenyEtAl2008}. Iteration over an atomic
split decomposition gives a maximum formula for every nonempty graph. At rank
width exactly two, the optimized profile pair is always $(5,6)$ or $(6,7)$,
and both pairs occur at every exact linear rank width at least three. Hence
global linear rank width does not determine the optimized profile pair, and
decomposition optimization alone cannot lower the endpoint seven for graphs
of rank width exactly two.

These structural results complement recent work on recognizability and
counting monadic second-order logic (CMSO) definability at rank width at most
two, and on sequential cable constructions for linear rank width
\cite{Kalampakas2026CMSO,Kalampakas2026Cable}. Those works do not study the
optimized profile pair introduced here.

The theorem concerns identification after unrestricted Weisfeiler-Leman
stabilization. It does not bound the number of refinement rounds or determine
the optimal dimension. Levet, Rombach, and Sieger obtain a different tradeoff
by using $(6k+3)$-dimensional Weisfeiler-Leman for $O(\log n)$ rounds in a
parallel canonization framework \cite{LevetRombachSieger2024}. Our argument
does not produce a canonical target decomposition, a bounded-depth
description, or fixed-point definable canonization. In particular, for graphs
of rank width at most two it gives a seven-dimensional upper bound without
determining the exact dimension of the class.

Section~\ref{sec:preliminaries} fixes the game, logic, rank-width, split-pair,
and flip conventions. Section~\ref{sec:carrier-synchronization} proves the
carrier bound and the remote synchronization tools.
Section~\ref{sec:key-enforcement} interprets every auxiliary key in the
ordinary graph vocabulary. Section~\ref{sec:transition} establishes the
two-palette strict-child transition. Section~\ref{sec:induction} closes the
source-guided induction and proves the main theorem.
Section~\ref{sec:optimized-profiles} develops optimized fork load, separation,
split composition, and the rank-width-two classification.
Section~\ref{sec:quantitative-consequences} gives the extremal dimension
ranges, finite calibration, and algorithmic consequences.
Section~\ref{sec:conclusion} records the remaining questions.

\section{Preliminaries}
\label{sec:preliminaries}

We work with finite simple undirected graphs over one fixed finite vocabulary.
Besides the symmetric irreflexive edge relation, the vocabulary may contain
finitely many unary vertex colours.  Isomorphisms and partial isomorphisms
must preserve all of these relations.  The uncoloured case is obtained by
using no unary predicates.
Whenever a rank decomposition or an optimized profile pair is discussed, the
graph is understood to have a nonempty vertex set. If the empty graph is
admitted in the ambient graph convention, it is the unique graph of its order
and is identified trivially.

For vertex-coloured graphs $G$ and $H$ of the same order and an integer
$m\geq1$, the bijective $m$-pebble game is played with $m$ indexed pairs of
pebbles. A position is a pair of tuples
$((g_i)_{i\in I},(h_i)_{i\in I})$ for some $I\subseteq[m]$, where
$g_i\in V(G)$ and $h_i\in V(H)$. Entries may repeat. The position is a
partial isomorphism when equality, adjacency, and every unary input colour
agree at all corresponding indices. In one round Spoiler either removes one
occupied pair, in which case the round ends, or requests a new pair when
fewer than $m$ pairs are present. In the latter case Duplicator announces an
arbitrary bijection from $V(G)$ to $V(H)$. Spoiler then chooses a vertex in
the source graph and places the new source pebble there. The matching target
pebble is placed on its image under the announced bijection. Spoiler wins as
soon as the new position is not a partial isomorphism.
Duplicator wins an infinite play if every position remains a partial
isomorphism.  We write $\BP_m(G,H)$ for this game and invoke it only for
equal-order graphs. Graphs of different orders are already separated by a
one-variable counting sentence.

Counting logic is first-order logic augmented with quantifiers of the form
$\exists^{\geq t}x$.  Its fragment using at most $m$ variable names is
denoted by $\Clog^m$.  Rebinding a name in a nested subformula does not bind
an occurrence of that name in a Boolean sibling.  This lexical convention is
used repeatedly in the variable counts below.  We use the standard
correspondence between finite-variable counting logic, bijective pebble
games, and Weisfeiler-Leman refinement
\cite{ImmermanLander1990,CaiFurerImmerman1992,Hella1996}.

\subsection{Rank decompositions, split pairs, and flips}
\label{subsec:source-preliminaries}

Rank width is always taken on the underlying uncoloured graph.  We follow the conventions of
\cite[Sections~2.2, 3, and~4]{GroheNeuen2023}.

For $X\subseteq V(G)$, write $\overline X=V(G)\setminus X$ and let
\[
  M_G(X,\overline X)\in
  \mathbb F_2^{X\times\overline X}
\]
be the bipartite adjacency matrix across the cut.  The cut rank is
\[
  \rho_G(X)=\operatorname{rk}_{\mathbb F_2}M_G(X,\overline X).
\]
A rank decomposition of a nonempty graph is a pair $(T,\gamma)$ in which
$T$ is a finite full rooted binary tree and $\gamma$ assigns a set of
vertices to each node.  At the root this set is $V(G)$.  At every internal
node it is the disjoint union of the sets assigned to the two children.  The
sets at the leaves are singletons.
The width of $(T,\gamma)$ is
\[
  \max_{t\in V(T)}\rho_G(\gamma(t)),
\]
and $\rw(G)$ is the minimum of this quantity over all rank
decompositions.  This is the rooted presentation used in
\cite[Section~2.2]{GroheNeuen2023}.

For a fixed rooted rank decomposition $\mathcal D=(T,\gamma)$, put
\[
  X_t=\gamma(t),
  \qquad
  r_{\mathcal D}(t)=\rho_G(X_t),
  \qquad
  w(\mathcal D)=\max_{t\in V(T)}r_{\mathcal D}(t).
\]
If $t$ has children $t_1,t_2$, define its fork load by
\[
  \ell_{\mathcal D}(t)
  =r_{\mathcal D}(t)+r_{\mathcal D}(t_1)+r_{\mathcal D}(t_2).
\]
The fork load of the decomposition is
\begin{equation}
  \ell(\mathcal D)
  =\max_{t\text{ internal}}\ell_{\mathcal D}(t).
  \label{eq:decomposition-fork-load}
\end{equation}
When the tree has no internal node, we set \(\ell(\mathcal D)=0\).
Thus $\ell(\mathcal D)\leq3w(\mathcal D)$. If the root is internal, its
first summand is zero and its two child cuts have equal rank.
Fomin and Korhonen define a related sum-width for the tripartitions displayed
by a $W$-improvement of a branch decomposition
\cite[Definition~4.3]{FominKorhonen2024}. Here fork load is the maximum,
over the internal nodes of one rooted rank decomposition, of the sum of the
three incident cut ranks.
Set
\begin{equation}
  m(\mathcal D)
  =\max\{\ell(\mathcal D)+2,\,2w(\mathcal D)+3\}.
  \label{eq:decomposition-pebble-budget}
\end{equation}

A linear layout of $G$ is an ordering $(v_1,\ldots,v_n)$ of its vertices.
Its width is
\[
  \max_{1\leq i<n}\rho_G(\{v_1,\ldots,v_i\}).
\]
The maximum is defined to be zero when \(n\leq1\).
The linear rank width $\lrw(G)$ is the minimum width of a linear layout.
Equivalently, it is obtained by restricting rank decompositions to
caterpillar decomposition trees \cite{JeongKwonOum2014}.

For $Y\subseteq V(G)$ and $v\in Y$, let
\[
  \operatorname{vec}_Y(v)
  =\bigl(\mathbf 1_{vw\in E(G)}\bigr)_{w\in\overline Y}
  \in\mathbb F_2^{\overline Y}.
\]
A split pair for $X$ is a pair $(A,B)$ such that $A\subseteq X$ and
$B\subseteq\overline X$, the vectors
$\{\operatorname{vec}_X(a):a\in A\}$ form a basis of the row space of
$M_G(X,\overline X)$, and
$\{\operatorname{vec}_{\overline X}(b):b\in B\}$ form a basis of the row
space of $M_G(\overline X,X)$.  Consequently,
\[
  |A|=|B|=\rho_G(X).
\]
An ordered split pair is an ordering $(\bar a,\bar b)$ of these two sets.  We
also regard the concatenation $(\bar a,\bar b)$ as an ordered anchor
occurrence and denote it by $\mathbf A$.  This agrees with
\cite[Definition~3.2]{GroheNeuen2023}.  The empty pair is the split pair at
the root.

Let $\mathcal U$ be the ordered list of unary relation symbols in the fixed
vocabulary. For $v\in V(G)$, define its complete unary atomic type by
\[
  \chi_G(v)=\bigl(\mathbf 1_{v\in P^G}\bigr)_{P\in\mathcal U}.
\]
For an ordered anchor occurrence
$\mathbf q=(q_1,\ldots,q_r)$, let
\[
  \chi_G^{\mathbf q}(v)
  =\bigl(\chi_G(v),(\mathbf 1_{v=q_i})_{i\in[r]}\bigr)
\]
and define the round-zero canonical colour by
\[
  c^{G,\mathbf q}_0(v)=\chi_G^{\mathbf q}(v).
\]
Inductively, put
\[
  c^{G,\mathbf q}_{i+1}(v)
  =\left(c^{G,\mathbf q}_i(v),
    \bigl\{\!\!\bigl\{c^{G,\mathbf q}_i(w):w\in N_G(v)
    \bigr\}\!\!\bigr\}\right).
\]
These are common recursive codes rather than graph-local names.  We write
\[
  \lambda^G_{\mathbf q}(v)
  =\operatorname{Stab}_G(\chi_G^{\mathbf q})(v)
\]
for the resulting canonical stable colour.  Formally, it may be represented
by the common-round code sequence
$(c^{G,\mathbf q}_i(v))_{i\geq 0}$.  On a finite graph it induces the usual
stable colour-refinement partition.  Repeated anchor occurrences cause no
ambiguity because their complete equality vector is retained.  This is a
canonical naming of the individualized stable colouring used in
\cite[Section~3, before Corollary~3.4]{GroheNeuen2023}.

If $\mathbf q$ is obtained from $\mathbf u$ by a fixed map of source
occurrences, there is a graph-independent forgetful map
$\pi_{\mathbf u,\mathbf q}$ satisfying
\[
  \lambda^G_{\mathbf q}
  =\pi_{\mathbf u,\mathbf q}\circ\lambda^G_{\mathbf u}.
\]
At round zero the map retains, repeats, and reorders the prescribed equality
coordinates.  At every later round it aggregates neighbour counts over the
fibres of the preceding map.  We use only this fine-to-coarse direction.

Let $\alpha:V(G)\to\mathcal C$ be a colouring.  A flip function for
$(G,\alpha)$ is a symmetric map
\[
  f:\mathcal C^2\longrightarrow\{0,1\}.
\]
The flipped graph $G^{\alpha,f}$ has the same vertices and colours as $G$,
and for distinct $u,v$ its edge relation is given by
\[
  \mathbf 1_{uv\in E(G^{\alpha,f})}
  =\mathbf 1_{uv\in E(G)}\oplus
    f(\alpha(u),\alpha(v)).
\]
When the colouring is clear, we write $G^f$.  This is
\cite[Definition~3.5]{GroheNeuen2023}.

Fix the source graph $G$.  If $\mathbf A=(\bar a,\bar b)$ is an ordered split
pair for $X$, choose once and for all a flip function supplied by
\cite[Lemma~3.6]{GroheNeuen2023} for the colouring
$\lambda^G_{\mathbf A}$.  Denote it by
\[
  f_{X,\mathbf A}.
\]
Extend it by zero to the universal alphabet of canonical stable types.  The
same totalized map is then evaluated on every target graph.  This convention
does not claim that the target components respect a target cut.  It only
ensures that the source flip and target flip are the same type-defined
operation.

If $\mathbf A'$ is the target occurrence paired with $\mathbf A$, then
$H^{f_{X,\mathbf A}}$ means that the totalized map is evaluated on the
canonical target colouring $\lambda^H_{\mathbf A'}$.  In particular, a
canonical type realised only on the target is governed by the zero extension.

If a canonical refinement $\beta$ comes with a fixed projection $\pi$ from
its universal stable-type alphabet to the universal stable-type alphabet of
$\lambda_{\mathbf A}$, the lift of $f_{X,\mathbf A}$ to $\beta$ is
\[
  f_{X,\mathbf A}^{\beta}(c,d)
  =f_{X,\mathbf A}(\pi(c),\pi(d)).
\]
The lift toggles exactly the same unordered vertex pairs as the original
flip.  We therefore suppress the superscript when the projection is clear.

Let
\[
  \mathcal C_{\alpha,f}(G)
\]
denote the family of vertex sets of the connected components of
$G^{\alpha,f}$.  For $y\in V(G)$, let
\[
  [\mathbf A,y]^G_{f_{X,\mathbf A}}
\]
be the unique member of
$\mathcal C_{\lambda^G_{\mathbf A},f_{X,\mathbf A}}(G)$ containing
$y$.  We omit $G$, $X$, or the subscript only when they are fixed by the
context.  The analogous target component containing $y'$ is written
\[
  [\mathbf A',y']^H_{f_{X,\mathbf A}}.
\]

Suppose that $X=X_1\mathbin{\dot\cup}X_2$, that $(A,B)$ is a split pair for
$X$, and that $(A_i,B_i)$ is a split pair for $X_i$.  The two child pairs are
nice with respect to $(A,B)$ when
\[
  A\cap X_i\subseteq A_i
  \quad\text{and}\quad
  B_{3-i}\cap X_i\subseteq A_i
  \qquad (i\in\{1,2\}).
\]
Grohe and Neuen prove that they may be chosen so that, in addition,
\[
  B_i\cap\overline X\subseteq B
  \qquad (i\in\{1,2\}).
\]
See \cite[Definition~4.1 and Lemma~4.2]{GroheNeuen2023}.  These inclusions
are converted into the shared-carrier and remoteness bounds in
Section~\ref{sec:carrier}.

We record the external results used below in the precise forms needed here.

\begin{proposition}[Split-pair flip]
\label{prop:gn-split-pair-flip}
Let $\mathbf A$ be an ordered split pair for $X$.  Every component in
\[
  \mathcal C_{\lambda^G_{\mathbf A},f_{X,\mathbf A}}(G)
\]
is contained in $X$ or in $\overline X$.
\end{proposition}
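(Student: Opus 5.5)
The plan is to obtain the statement directly from \cite[Lemma~3.6]{GroheNeuen2023}. The only real work is to check that our conventions make the flip $f_{X,\mathbf A}$ coincide, on the source graph, with the flip that lemma supplies. First, $\lambda^G_{\mathbf A}$ is by construction a canonical naming of the stable colouring obtained by individualizing the entries of $\mathbf A=(\bar a,\bar b)$. So it induces exactly the partition of $V(G)$ that Grohe and Neuen refine: the one obtained by colour refinement after individualizing $A\cup B$. Second, $f_{X,\mathbf A}$ was chosen from their lemma for this colouring. Its zero extension to the universal alphabet changes nothing on $G$, because only colours realised in $G$ are ever evaluated there. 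Hence $G^{\lambda^G_{\mathbf A},f_{X,\mathbf A}}$ is literally the flipped graph of their lemma, and its components lie in $X$ or in $\overline X$.

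For the reader's orientation, I would also recall the mechanism behind that lemma rather than treat it as a black box. Since $\{\operatorname{vec}_{\overline X}(b):b\in B\}$ is a basis of the row space of $M_G(\overline X,X)$, the columns of $M_G(X,\overline X)$ indexed by $B$ have rank $\rho_G(X)$. Consequently, for $v\in X$ the vector $\operatorname{vec}_X(v)$ is determined by $N(v)\cap B$. Symmetrically, for $w\in\overline X$ the vector $\operatorname{vec}_{\overline X}(w)$ is determined by $N(w)\cap A$. This gives a factorization
\[
  \mathbf 1_{vw\in E(G)}=\Phi\bigl(N(v)\cap B,\;N(w)\cap A\bigr)
  \qquad(v\in X,\ w\in\overline X)
\]
for a fixed function $\Phi$. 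Both arguments are read off from the round-one colours $c^{G,\mathbf A}_1(v)$ and $c^{G,\mathbf A}_1(w)$. So all crossing adjacencies are a function of the stable colours, which suggests defining the flip by $f(c,d)=\Phi(\cdot,\cdot)$ on the corresponding neighbourhood patterns. Such a flip deletes every crossing edge and inserts none.

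The expected obstacle is that a single stable colour class may contain vertices of both $X$ and $\overline X$. A flip defined on colour pairs then also toggles pairs inside $X$ or inside $\overline X$, and it is not immediate that it is well defined and symmetric. This is exactly the point handled in \cite[Lemma~3.6]{GroheNeuen2023}. Their argument shows that no new crossing edge is created, and that every toggled pair inside a side remains inside that side, so components cannot cross the cut. I would not reprove this. I would cite it and confine the verification to the two identification steps in the first paragraph: canonical naming preserves the partition, and the zero extension is invisible on $G$.
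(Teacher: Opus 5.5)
Your proposal is correct and matches the paper, which proves this proposition only by citing \cite[Lemma~3.6]{GroheNeuen2023} for the colouring $\lambda^G_{\mathbf A}$, together with the two identifications you verify: the canonical naming induces the individualized stable partition, and the zero extension is never evaluated on realised colours of $G$. In your optional orientation sketch, note that toggles inside one side are harmless automatically. The real content of the cited lemma is that crossing pairs from the same colour pair but with opposite orientations have equal adjacency, and this is where equitability of the stable colouring is needed.
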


\noindent
This is \cite[Lemma~3.6]{GroheNeuen2023}.

\begin{proposition}[Isomorphism invariance under a common flip]
\label{prop:gn-flip-isomorphism}
Let $(G,\alpha)$ and $(H,\alpha')$ use a common colour alphabet and let $f$
be a flip function on that alphabet.  For every bijection
$\varphi:V(G)\to V(H)$,
\[
  \varphi:(G,\alpha)\cong(H,\alpha')
  \quad\Longleftrightarrow\quad
  \varphi:(G^{\alpha,f},\alpha)
  \cong(H^{\alpha',f},\alpha').
\]
\end{proposition}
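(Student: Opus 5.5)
The argument is a direct check from the definition of the flipped graph. The only ingredient is that $\varphi$, in either direction, preserves the colourings, so the same flip bit is added on both sides.

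First, note what $\varphi:(G,\alpha)\cong(H,\alpha')$ means here. It is a bijection with $\alpha'(\varphi(v))=\alpha(v)$ for all $v$. It preserves the unary input colours. For distinct $u,v$ it satisfies $\mathbf 1_{uv\in E(G)}=\mathbf 1_{\varphi(u)\varphi(v)\in E(H)}$. By definition $G^{\alpha,f}$ has the same vertex set, the same unary predicates and the same colouring $\alpha$ as $G$, and likewise for $H$. So the colour-preservation and unary-predicate conditions are literally the same on both sides of the claimed equivalence, and only the edge condition needs comparison.

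Assume colour preservation, and fix distinct $u,v\in V(G)$. Then $\varphi(u),\varphi(v)$ are distinct, and
\[
  f(\alpha'(\varphi(u)),\alpha'(\varphi(v)))=f(\alpha(u),\alpha(v)).
\]
Hence
\[
  \mathbf 1_{\varphi(u)\varphi(v)\in E(H^{\alpha',f})}
  \oplus\mathbf 1_{uv\in E(G^{\alpha,f})}
  =\mathbf 1_{\varphi(u)\varphi(v)\in E(H)}
  \oplus\mathbf 1_{uv\in E(G)},
\]
because the two flip bits cancel. So $\varphi$ preserves adjacency between $G$ and $H$ exactly when it preserves adjacency between the flipped graphs. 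This gives both directions at once.

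Two points need care. First, $f$ must be symmetric, so that the flipped edge relation is well defined and irreflexive; this holds by the definition of a flip function. Second, $\alpha$ and $\alpha'$ must take values in the common alphabet on which $f$ is defined. In the later application, where $f$ is the zero-extended totalized map, this holds by construction. There is no genuine obstacle: the proof is the XOR cancellation above, with colour preservation as the hypothesis shared by both sides.
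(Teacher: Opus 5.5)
Your argument is correct. Both sides of the equivalence include preservation of $\alpha,\alpha'$, so the flip bits $f(\alpha(u),\alpha(v))$ and $f(\alpha'(\varphi(u)),\alpha'(\varphi(v)))$ coincide and cancel, and adjacency is preserved in the original graphs exactly when it is preserved in the flipped ones. The paper gives no proof of its own and simply cites Grohe and Neuen's Lemma~3.9; your direct definitional check is the standard argument behind that lemma.
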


\noindent
This is \cite[Lemma~3.9]{GroheNeuen2023}.  We will also use its
contrapositive for induced coloured subgraphs.

\begin{proposition}[Colour-refinement invariance under a flip]
\label{prop:gn-flip-colour-refinement}
For every coloured graph $(G,\alpha)$ and every flip function $f$ for
$(G,\alpha)$, the two stable colourings
\[
  \operatorname{Stab}_G(\alpha)
  \quad\text{and}\quad
  \operatorname{Stab}_{G^{\alpha,f}}(\alpha)
\]
induce the same partition of $V(G)$.
\end{proposition}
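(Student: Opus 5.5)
We have to prove Proposition~\ref{prop:gn-flip-colour-refinement}: the stable colourings of $(G,\alpha)$ and of $(G^{\alpha,f},\alpha)$ induce the same partition of $V(G)$. The plan is to show that each of the two partitions is equitable for the other graph. Each stable partition is then at least as coarse as the other, so the two coincide.

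Let $\mathcal P$ be the partition induced by $\operatorname{Stab}_G(\alpha)$. Recall two facts about it: it is the coarsest partition refining the partition of $\alpha$ that is equitable in $G$, and every coarser equitable refinement of $\alpha$ contains it. Here equitable means that for all classes $P,Q\in\mathcal P$, every $v\in P$ has the same number $d_G(v,Q)$ of neighbours in $Q$. Since $\mathcal P$ refines $\alpha$, each class $P$ carries a single $\alpha$-colour, which we write $\alpha(P)$.

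The key computation fixes $v\in P$ and a class $Q$. The neighbours of $v$ in $Q$ inside $G^{\alpha,f}$ are determined by the flip value:
\begin{itemize}
\item if $f(\alpha(P),\alpha(Q))=0$, they are exactly the $G$-neighbours of $v$ in $Q$;
\item if the flip value is $1$, they are the non-neighbours of $v$ in $Q\setminus\{v\}$.
\end{itemize}
This gives
\[
  d_{G^{\alpha,f}}(v,Q)=
  \begin{cases}
    d_G(v,Q), & f(\alpha(P),\alpha(Q))=0,\\
    |Q|-\mathbf 1_{v\in Q}-d_G(v,Q), & f(\alpha(P),\alpha(Q))=1.
  \end{cases}
\]
The term $\mathbf 1_{v\in Q}$ is constant for $v\in P$: it equals $1$ exactly when $P=Q$. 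Hence $d_{G^{\alpha,f}}(v,Q)$ depends only on $(P,Q)$. So $\mathcal P$ is equitable in $G^{\alpha,f}$ and refines $\alpha$, and therefore $\mathcal P$ refines the stable partition of $G^{\alpha,f}$.

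For the converse we use that flipping twice with the same $f$ returns $G$: the colouring is unchanged, so $(G^{\alpha,f})^{\alpha,f}=G$. Applying the same argument to $G^{\alpha,f}$ shows that its stable partition is equitable in $G$ and refines $\alpha$, so it refines $\mathcal P$. The two partitions therefore coincide.

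The only delicate step is the irreflexivity correction $\mathbf 1_{v\in Q}$, which must be constant on $P$. It is, because $\mathcal P$ is a partition: $v\in Q$ for some $v\in P$ forces $P=Q$. The other ingredient is the characterisation of the stable partition as the coarsest equitable refinement of the initial colouring. That characterisation is standard and follows by induction on the refinement rounds: every equitable refinement of $\alpha$ refines each round colouring $c_i$.
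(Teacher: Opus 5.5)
Your proof is correct. The paper gives no argument of its own here and only cites \cite[Corollary~3.11]{GroheNeuen2023}; your route is the standard one behind that corollary: the stable partition of $(G,\alpha)$ stays equitable after the flip by the complement count, which is the same count the paper uses in \eqref{eq:flipped-quotient-count}, and the involution $(G^{\alpha,f})^{\alpha,f}=G$ gives the reverse refinement.
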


\noindent
This is \cite[Corollary~3.11]{GroheNeuen2023}.  The assertion concerns
equality of the induced partitions.  Our canonical common-round codes supply
the cross-graph type names used later.

\begin{proposition}[Nice child split pairs]
\label{prop:gn-nice-split-pairs}
Let $X=X_1\mathbin{\dot\cup}X_2$ and let $(A,B)$ be a split pair for $X$.
There are child split pairs $(A_i,B_i)$ for $X_i$ that are nice with respect
to $(A,B)$ and satisfy
\[
  B_i\cap\overline X\subseteq B
  \qquad (i\in\{1,2\}).
\]
\end{proposition}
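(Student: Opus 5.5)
The plan is to build the child pairs directly by linear algebra over $\mathbb F_2$: first choose the $A_i$, and then choose the $B_i$ out of material already placed in $B$ and in the $A$'s. Throughout, for $Y\subseteq V(G)$ the row space of $M_G(Y,\overline Y)$ is the span of the vectors $\operatorname{vec}_Y(v)$ for $v\in Y$. For a set $Z$ of coordinates, write $\operatorname{vec}(v)|_Z$ for the restriction of such a vector to $Z$. We use $\overline{X_i}=X_{3-i}\mathbin{\dot\cup}\overline X$.

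\emph{Step 1 (choosing $A_i$).} Fix $i$. The vectors $\operatorname{vec}_{X_i}(a)$ for $a\in A\cap X_i$ are indexed by $\overline{X_i}\supseteq\overline X$. Their restrictions to $\overline X$ are exactly $\operatorname{vec}_X(a)$, which are linearly independent because $A$ is a basis of the row space of $M_G(X,\overline X)$. Hence the unrestricted vectors are independent as well. Extend $A\cap X_i$ by vertices of $X_i$ to a set $A_i\subseteq X_i$ whose vectors form a basis of the row space of $M_G(X_i,\overline{X_i})$. This secures $A\cap X_i\subseteq A_i$.

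\emph{Step 2 (choosing $B_i$).} We need vertices of $\overline{X_i}=X_{3-i}\cup\overline X$ whose vectors, indexed by $X_i$, span the row space of $M_G(\overline{X_i},X_i)$. That space is the sum of the span of $\{\operatorname{vec}(v)|_{X_i}:v\in\overline X\}$ and the span of $\{\operatorname{vec}(v)|_{X_i}:v\in X_{3-i}\}$.
\begin{itemize}
\item For the first summand: every $\operatorname{vec}_{\overline X}(v)$ with $v\in\overline X$ is a combination of the $\operatorname{vec}_{\overline X}(b)$ with $b\in B$. Restricting the coordinates from $X$ to $X_i$ shows that the restrictions of $B$ already span this summand. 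Choose $B_i^{\mathrm{out}}\subseteq B$ whose restrictions form a basis of it.
\item For the second summand: every $\operatorname{vec}_{X_{3-i}}(v)$ with $v\in X_{3-i}$, indexed by $X_i\cup\overline X$, is a combination of the vectors of $A_{3-i}$ by Step~1. Restricting to $X_i$ shows that the restrictions of $A_{3-i}$ span this summand.
\end{itemize}
Consequently, $\{\operatorname{vec}(v)|_{X_i}:v\in B_i^{\mathrm{out}}\cup A_{3-i}\}$ spans the whole row space. Extend $B_i^{\mathrm{out}}$ greedily by elements $B_i^{\mathrm{in}}\subseteq A_{3-i}$ to a basis, and set $B_i=B_i^{\mathrm{out}}\cup B_i^{\mathrm{in}}$.

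\emph{Step 3 (verification).} By construction, $(A_i,B_i)$ is a split pair for $X_i$, so $|A_i|=|B_i|=\rho_G(X_i)$ holds automatically. Since $B_i^{\mathrm{in}}\subseteq X_{3-i}$ and $B_i^{\mathrm{out}}\subseteq\overline X$, we obtain two inclusions:
\[
B_i\cap\overline X=B_i^{\mathrm{out}}\subseteq B,
\qquad
B_i\cap X_{3-i}=B_i^{\mathrm{in}}\subseteq A_{3-i}.
\]
The second, with indices swapped, is the niceness condition $B_{3-i}\cap X_i\subseteq A_i$. The first niceness condition was secured in Step~1.

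The only delicate point is the order of choices. If one tried to choose $B_i$ before $A_{3-i}$, one would then have to show that $(A\cap X_{3-i})\cup B_i^{\mathrm{in}}$ is independent over $X_i\cup\overline X$, which is awkward. Choosing both $A$'s first and then drawing $B_i^{\mathrm{in}}$ from $A_{3-i}$ removes this issue. The key observation is that the restrictions of a basis of the larger row space still span the restricted row space.
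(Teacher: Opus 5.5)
Your proof is correct. The paper does not prove this proposition itself; it imports it as \cite[Lemma~4.2]{GroheNeuen2023}. Your proposal therefore supplies a self-contained linear-algebra proof where the paper relies on a citation.

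The argument runs in two stages. First you extend $A\cap X_i$ to $A_i$, using that a linear dependency among the $\operatorname{vec}_{X_i}$-vectors would restrict to one among the $\operatorname{vec}_X$-vectors. Then you draw $B_i$ from $B\cup A_{3-i}$, using that restrictions of a spanning family still span the restricted row space. This is the natural route. The order of choices you stress, fixing both $A_i$ before either $B_i$, is exactly what makes all three inclusions hold simultaneously.

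Proving it directly has two small payoffs. First, your construction yields $B_i\subseteq B\cup A_{3-i}$ outright. That is precisely the containment $B_1\subseteq A_2\cup B$, $B_2\subseteq A_1\cup B$ which Lemma~\ref{lem:shared-carrier} later has to extract from niceness together with the extra clause. Second, the citation route silently requires that Grohe and Neuen's conventions for split pairs and nice triples coincide with the ones fixed here. Your argument needs no such check.
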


\noindent
This is \cite[Lemma~4.2]{GroheNeuen2023}.

\begin{proposition}[Selection from component partitions]
\label{prop:gn-component-selection}
Let $G_0$ and $H_0$ be nonisomorphic coloured graphs of the same order.  Let
$\mathcal P$ and $\mathcal Q$ be partitions of their vertex sets such that
every connected component is contained in one block.  For every bijection
$\sigma:V(G_0)\to V(H_0)$, there is a vertex $v\in V(G_0)$ for which
\[
  G_0[P_v]\not\cong H_0[Q_{\sigma(v)}],
\]
where $P_v$ and $Q_{\sigma(v)}$ are the unique blocks containing $v$ and
$\sigma(v)$.
\end{proposition}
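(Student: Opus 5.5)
We argue by contradiction. Suppose that for some bijection $\sigma:V(G_0)\to V(H_0)$ we have $G_0[P_v]\cong H_0[Q_{\sigma(v)}]$ for every $v\in V(G_0)$. From this assumption we assemble an isomorphism $G_0\cong H_0$, contradicting the hypothesis. The guiding idea is that a graph in which every component lies inside a block of a partition is the disjoint union of the induced subgraphs on the blocks: no edge joins two different blocks. Hence each of $G_0$ and $H_0$ is determined up to isomorphism by the multiset of isomorphism types of its block subgraphs. Empty blocks do not arise, since the blocks of a partition are nonempty.

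The first step is this decomposition. Every edge $uv$ of $G_0$ lies in one connected component, which is contained in a single block, so $u$ and $v$ lie in the same block. Therefore
\[
  G_0=\mathop{\dot\bigcup}_{P\in\mathcal P}G_0[P],
\]
and this is an equality of coloured graphs, since unary colours are vertex-local. The same holds for $H_0$ with respect to $\mathcal Q$.

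The second step counts vertices by isomorphism type. Fix an isomorphism type $\tau$ of coloured graphs and let $N_G(\tau)$ be the number of vertices $v$ with $G_0[P_v]\in\tau$. Each block $P$ of type $\tau$ contributes exactly $|P|=|\tau|$ such vertices, so $N_G(\tau)=|\tau|\cdot\#\{P\in\mathcal P: G_0[P]\in\tau\}$. Define $N_H(\tau)$ analogously. Under the contrary assumption, $\sigma$ restricts to a bijection from $\{v: G_0[P_v]\in\tau\}$ onto $\{y: H_0[Q_y]\in\tau\}$, because $\sigma$ is a bijection and $G_0[P_v]\cong H_0[Q_{\sigma(v)}]$ for each $v$. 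Hence $N_G(\tau)=N_H(\tau)$. Dividing by $|\tau|\geq1$ shows that both partitions contain the same number of blocks of type $\tau$.

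The final step builds the isomorphism. For each $\tau$, match the blocks of type $\tau$ in $\mathcal P$ bijectively with those in $\mathcal Q$, and choose an isomorphism for each matched pair. By the disjoint-union decomposition, the union of these isomorphisms is an isomorphism $G_0\cong H_0$, which is the desired contradiction.

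The only delicate point is that $\sigma$ need not map blocks onto blocks, since distinct vertices of one block may be sent into different target blocks. The type-counting step is what handles this: it uses only the fact that $\sigma$ is a bijection, and dividing by $|\tau|$ converts vertex counts into block counts. The equal-order hypothesis is used only in that a bijection $\sigma$ exists at all.
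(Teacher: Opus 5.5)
Your proposal is correct and follows essentially the same route as the paper: the paper phrases the key step through a bipartite incidence graph on $\mathcal P$ and $\mathcal Q$. The substance is identical in both. $\sigma$ maps the union of the type-$\tau$ source blocks onto the union of the type-$\tau$ target blocks, and equal block orders give equal block counts. Matching the blocks type by type and combining block isomorphisms then yields $G_0\cong H_0$, since no edge joins distinct blocks.
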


\begin{proof}
This is the coloured form of
\cite[Observation~4.3]{GroheNeuen2023}.  Suppose the conclusion fails.  Form
the bipartite incidence graph on $\mathcal P$ and $\mathcal Q$, joining two
blocks when $\sigma$ maps at least one vertex between them.  Every incidence
edge joins blocks of the same coloured isomorphism type.  The bijection
$\sigma$ therefore maps the union of the source blocks of each fixed type
onto the union of the target blocks of that type.  Since all blocks of one
type have the same order, the two partitions contain equally many blocks of
that type.  Pair the blocks type by type and combine coloured block
isomorphisms.  No edge joins distinct blocks of a component partition, so
the combined map is an isomorphism from $G_0$ to $H_0$, a contradiction.
\end{proof}

Write $G\simeq_d H$ when $d$-dimensional Weisfeiler-Leman does not
distinguish $G$ and $H$. We say that $d$-dimensional Weisfeiler-Leman
identifies $G$ if $G\not\simeq_d H$ for every graph $H$ over the same
vocabulary that is not isomorphic to $G$. The Weisfeiler-Leman dimension of
$G$ is
\[
  \dim_{\mathrm{WL}}(G)
  =\min\{d\geq1 : d\text{-dimensional Weisfeiler-Leman identifies }G\}.
\]
Write $G\equiv_{\Clog^r}H$ when the two graphs
satisfy the same sentences of $r$-variable first-order logic with counting.

\begin{proposition}[Weisfeiler-Leman, pebble games, and counting logic]
\label{prop:gn-wl-game-logic}
For vertex-coloured graphs $G$ and $H$ of the same order and every $d\geq1$,
\[
  G\simeq_d H
  \quad\Longleftrightarrow\quad
  \text{Duplicator wins }\BP_{d+1}(G,H)
  \quad\Longleftrightarrow\quad
  G\equiv_{\Clog^{d+1}}H.
\]
\end{proposition}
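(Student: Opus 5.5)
This statement is the classical three-way equivalence between Weisfeiler--Leman indistinguishability, bijective pebble games and counting logic. I would not reprove it from scratch. Instead I would assemble it from the standard correspondences of Immerman--Lander, Cai--Fürer--Immerman and Hella, checking that the conventions fixed above (vertex colours, repeated pebble entries, the lexical rebinding rule, equal order) match the hypotheses of those results.

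\emph{First equivalence.} I would show that Duplicator wins $\BP_{d+1}(G,H)$ if and only if $G\equiv_{\Clog^{d+1}}H$, following Hella's argument for the bijective game.
\begin{itemize}
\item \emph{Induction.} Induct on quantifier depth against the number of game rounds. A position $((g_i)_{i\in I},(h_i)_{i\in I})$ survives $n$ more rounds exactly when the partial assignments satisfy the same $\Clog^{d+1}$ formulas of depth at most $n$ with free variables among $\{x_i : i\in I\}$.
\item \emph{Atomic case.} This is the partial-isomorphism condition, including equality between repeated entries and the unary colours.
\item \emph{Quantifier step.} A bijection $V(G)\to V(H)$ preserving the depth-$n$ types of every one-point extension exists if and only if each such type has equally many realisers on both sides. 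This is exactly what the quantifiers $\exists^{\geq t}x_j$ detect.
\item \emph{Passing to the whole game.} Since the graphs are finite, only finitely many formulas of each depth exist up to equivalence. Winning every finite horizon is therefore the same as winning the infinite game.
\end{itemize}

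\emph{Second equivalence.} I would show that $G\simeq_d H$ holds if and only if the stable $d$-WL colour histograms agree, and that this in turn is equivalent to $\Clog^{d+1}$-equivalence, following Cai--Fürer--Immerman.
\begin{itemize}
\item \emph{Upper direction.} Each round-$n$ colour of a $d$-tuple is definable by a $\Clog^{d+1}$ formula with $d$ free variables. The extra variable serves as the counting witness in the refinement step, and existing names are reused. The stable histogram is then expressed by counting sentences.
\item \emph{Lower direction.} Conversely, equal colours at round $n$ imply agreement on formulas of the matching depth. The extra $d+1$-st variable is simulated by the multiset of $d$ substituted tuples in the WL refinement step.
\end{itemize}

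\emph{Main obstacle.} The one point requiring care is the indexing: $d$-dimensional WL must correspond to $d+1$ pebbles and $d+1$ variables. This holds for the $d$-WL variant that refines a $d$-tuple by the multiset over $v$ of the colour tuples obtained by substituting $v$ into each coordinate. It fails for the ``folklore'' variant, which is off by one. So the main obstacle is fixing that the paper's $d$-WL is this standard Immerman--Lander variant and checking the correspondence at $d=1$, where it reduces to colour refinement versus $\Clog^2$. The unary colours only enlarge the initial atomic types, and the equal-order assumption is harmless because different orders are already separated by $\Clog^1$.
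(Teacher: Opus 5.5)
Your plan follows the paper's route. The paper gives no argument of its own here and simply cites Theorems~2.3 and~2.4 of Grohe and Neuen, which are exactly the Immerman--Lander/Cai--F\"urer--Immerman and Hella correspondences you invoke.

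One correction to your ``main obstacle'': you have the names reversed. The variant you describe refines $\bar v$ by the multiset over $v$ of $(\operatorname{atp}(\bar v,v),\chi(\bar v[v/1]),\ldots,\chi(\bar v[v/d]))$. That is precisely what the GNN literature calls \emph{folklore} WL, and it is the standard logic-literature variant matching $\Clog^{d+1}$. The off-by-one variant is the \emph{oblivious} one, which uses a separate multiset for each coordinate. Note also that the atomic type of $(\bar v,v)$ must be part of the refinement; otherwise at $d=1$ the refinement sees no adjacency at all.

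Finally, the step from all finite horizons to the infinite game does not rest on the number of formulas per depth. It rests on finiteness of the position set: the sets of positions surviving $n$ rounds decrease and stabilize, and the stable set is closed under Duplicator's replies.
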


\noindent
The two equivalences are
\cite[Theorems~2.3 and~2.4]{GroheNeuen2023}.  Consequently, if Spoiler wins
$\BP_m(G,H)$ against every same-order nonisomorphic target
$H$ for $m\geq2$, then $(m-1)$-dimensional Weisfeiler-Leman identifies $G$.  Pairwise
$\Clog^m$ separators combine into one identifying sentence by the finite
conjunction argument in \cite[Corollary~2.5]{GroheNeuen2023}.  Graphs of a
different order are separated by a one-variable counting sentence.

For comparison, \cite[Theorem~4.4]{GroheNeuen2023} proves that
$(3k+4)$-dimensional Weisfeiler-Leman identifies graphs of rank width at most
$k$.  Its proof uses the same source-guided descent and a
$\BP_{3k+5}$ strategy. Corollary~3.12 of the full version of
Levet, Rombach, and Sieger sharpens the unrestricted dimension bound to
$3k+3$ \cite{LevetRombachSieger2024Full}. We use the theorem of Grohe and Neuen as
the source of the induction architecture.

\section{Shared carriers and remote synchronization}
\label{sec:carrier-synchronization}

\subsection{The shared carrier and remoteness}
\label{sec:carrier}

Fix a source graph $G$, a graph $H$ of the same order, and a rooted rank
decomposition $\mathcal D=(\mathcal T,\gamma)$ of $G$. Consider an internal
decomposition node $t$ with shore $X$ and children $t_1,t_2$ with shores
$X_1,X_2$, where
\[
X=X_1\mathbin{\dot\cup}X_2.
\]
Let
\begin{equation}
\mathbf S=(\bar a,\bar b),\qquad
\mathbf F=(\bar a_1,\bar b_1),\qquad
\mathbf T=(\bar a_2,\bar b_2)
\label{eq:parent-child-occurrences}
\end{equation}
be the ordered split-pair occurrences for the parent, first child, and second child. We choose them using the strengthened niceness clause of \cite[Lemma~4.2]{GroheNeuen2023}. Their numbers of entries are twice the corresponding cut ranks. When an occurrence tuple appears in a set operation, it denotes the set of its entries.

Let $\mathbf P$ be the ordered carrier obtained from
$\mathbf S\cup\mathbf F\cup\mathbf T$ by retaining one copy of every
distinct source vertex in a fixed source-determined order.

\begin{lemma}[Shared carrier]
\label{lem:shared-carrier}
The split-pair occurrences in \eqref{eq:parent-child-occurrences} can be chosen so that
\[
  |\mathbf P|\leq
  r_{\mathcal D}(t)+r_{\mathcal D}(t_1)+r_{\mathcal D}(t_2),
\]
where $t_1,t_2$ are the children of the current node $t$.
\end{lemma}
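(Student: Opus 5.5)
The plan is to show that the whole union $\mathbf S\cup\mathbf F\cup\mathbf T$ lies inside the three sets $A_1$, $A_2$ and $B$. Their sizes are $r_{\mathcal D}(t_1)$, $r_{\mathcal D}(t_2)$ and $r_{\mathcal D}(t)$. The bound then follows, since $\mathbf P$ has exactly one entry for each distinct vertex of the union.

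First I fix the choice of occurrences. Let $(A,B)$ be any split pair for $X$; at the root this is the empty pair. Proposition~\ref{prop:gn-nice-split-pairs} supplies child split pairs $(A_i,B_i)$ for $X_i$ that are nice with respect to $(A,B)$ and satisfy the strengthened clause $B_i\cap\overline X\subseteq B$. We order all three pairs in any fixed source-determined way. From the definition of split pairs, $|A|=|B|=r_{\mathcal D}(t)$ and $|A_i|=|B_i|=r_{\mathcal D}(t_i)$. Moreover $A\subseteq X$, $B\subseteq\overline X$, $A_i\subseteq X_i$ and $B_i\subseteq\overline{X_i}=X_{3-i}\mathbin{\dot\cup}\overline X$.

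Next I check the containment $A\cup B\cup A_1\cup B_1\cup A_2\cup B_2\subseteq A_1\cup A_2\cup B$ piece by piece.
\begin{itemize}
\item Since $X=X_1\mathbin{\dot\cup}X_2$, niceness gives $A=(A\cap X_1)\cup(A\cap X_2)\subseteq A_1\cup A_2$.
\item For $B_i$, we split along $\overline{X_i}=X_{3-i}\mathbin{\dot\cup}\overline X$. The part $B_i\cap X_{3-i}$ lies in $A_{3-i}$ by the second niceness inclusion, applied with index $3-i$. The part $B_i\cap\overline X$ lies in $B$ by the strengthened clause.
\item The sets $B$, $A_1$ and $A_2$ are trivially contained in the union.
\end{itemize}
Hence $|\mathbf P|\leq|A_1|+|A_2|+|B|=r_{\mathcal D}(t_1)+r_{\mathcal D}(t_2)+r_{\mathcal D}(t)$.

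There is no real obstacle here; the argument is pure bookkeeping. The only step needing care is the decomposition of $\overline{X_i}$ into $X_{3-i}$ and $\overline X$, and matching the niceness indices correctly ($B_{3-i}\cap X_i\subseteq A_i$). I would also note that at the root, where $B=\emptyset$ and $\overline X=\emptyset$, the same computation goes through unchanged.
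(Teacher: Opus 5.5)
Your proposal is correct and matches the paper's proof: both use niceness together with the strengthened clause $B_i\cap\overline X\subseteq B$ to obtain $A\subseteq A_1\cup A_2$ and $B_i\subseteq A_{3-i}\cup B$. Both then bound the distinct values by $|A_1|+|A_2|+|B|=r_{\mathcal D}(t_1)+r_{\mathcal D}(t_2)+r_{\mathcal D}(t)$. The paper also notes that $A_1$, $A_2$ and $B$ lie in disjoint regions, but as your union bound shows, disjointness is not needed for the inequality.
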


\begin{proof}
Write the parent split pair as $(A,B)$, where $A\subseteq X$ and $B\subseteq\overline X$. Write the child split pairs as $(A_1,B_1)$ and $(A_2,B_2)$, where $A_i\subseteq X_i$. The additional clause in \cite[Lemma~4.2]{GroheNeuen2023} gives
\begin{equation}
B_i\cap\overline X\subseteq B
\qquad\text{for }i\in\{1,2\}.
\label{eq:external-niceness}
\end{equation}
Together with niceness, this yields
\begin{equation}
A\subseteq A_1\cup A_2,
\qquad
B_1\subseteq A_2\cup B,
\qquad
B_2\subseteq A_1\cup B.
\label{eq:nice-triple-containment}
\end{equation}
The sets $A_1$, $A_2$, and $B$ lie in the pairwise disjoint regions $X_1$, $X_2$, and $\overline X$. Their sizes are
$r_{\mathcal D}(t_1)$, $r_{\mathcal D}(t_2)$, and
$r_{\mathcal D}(t)$, respectively. Every vertex occurring in
$\mathbf S\cup\mathbf F\cup\mathbf T$ belongs to
$A_1\cup A_2\cup B$, which proves the bound.
\end{proof}

\begin{corollary}[Profile carrier bound]
\label{cor:profile-carrier}
At every internal node $t$,
\[
  |\mathbf P|\leq\ell_{\mathcal D}(t)\leq\ell(\mathcal D).
\]
\end{corollary}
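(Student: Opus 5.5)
This corollary follows by chaining Lemma~\ref{lem:shared-carrier} with the definitions of fork load. At an internal node $t$ with children $t_1,t_2$, choose the split-pair occurrences $\mathbf S,\mathbf F,\mathbf T$ as in Lemma~\ref{lem:shared-carrier}, using the strengthened niceness clause of Proposition~\ref{prop:gn-nice-split-pairs}. The carrier $\mathbf P$ is then formed from these occurrences, and the lemma gives
\[
  |\mathbf P|\leq r_{\mathcal D}(t)+r_{\mathcal D}(t_1)+r_{\mathcal D}(t_2).
\]
By the definition of the fork load at $t$, the right-hand side is exactly $\ell_{\mathcal D}(t)$. This gives the first inequality.

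For the second inequality, note that $t$ is internal. It is therefore one of the nodes over which the maximum in \eqref{eq:decomposition-fork-load} is taken, so $\ell_{\mathcal D}(t)\leq\ell(\mathcal D)$. The convention $\ell(\mathcal D)=0$ for trees without internal nodes plays no role, because the statement only concerns internal nodes.

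The root deserves a brief check. There $\overline X=\emptyset$, the parent pair is empty, and $r_{\mathcal D}(t)=0$. The lemma's proof still applies, since $B=\emptyset$ and each $B_i\subseteq X$ lies in $A_{3-i}$ by niceness. So no separate case is needed.

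There is no real obstacle here. The only point to state explicitly is that $\mathbf P$ denotes the carrier built from the particular choice of occurrences supplied by Lemma~\ref{lem:shared-carrier}. The corollary should be read with that fixed choice in force at every internal node.
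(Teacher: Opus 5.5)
Your proposal is correct and is essentially the paper's own proof: the paper likewise just cites Lemma~\ref{lem:shared-carrier} and the definition of fork load, so that $|\mathbf P|\leq r_{\mathcal D}(t)+r_{\mathcal D}(t_1)+r_{\mathcal D}(t_2)=\ell_{\mathcal D}(t)\leq\ell(\mathcal D)$. Your remarks that the root needs no separate case (there $B=\varnothing$ and niceness gives $B_i\subseteq A_{3-i}$) and that $\mathbf P$ refers to the carrier built from the lemma's particular choice of occurrences are accurate.
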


\begin{proof}
This is Lemma~\ref{lem:shared-carrier} and the definition of the fork load.
\end{proof}

\begin{corollary}[Remoteness]
\label{cor:carrier-remoteness}
The shared carrier satisfies
\begin{equation}
\mathbf P\setminus\mathbf F\subseteq\overline{X_1},
\qquad
\mathbf P\setminus\mathbf T\subseteq\overline{X_2}.
\label{eq:carrier-remoteness}
\end{equation}
\end{corollary}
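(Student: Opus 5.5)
We prove the first inclusion; the second follows by exchanging the roles of the two children, since the hypotheses are symmetric in $i\in\{1,2\}$. As in the proof of Lemma~\ref{lem:shared-carrier}, write the parent pair as $(A,B)$ and the child pairs as $(A_1,B_1)$ and $(A_2,B_2)$. Here $A\subseteq X$, $B\subseteq\overline X$, $A_i\subseteq X_i$ and $B_i\subseteq\overline{X_i}$. As sets,
\[
\mathbf P=A\cup B\cup A_1\cup B_1\cup A_2\cup B_2,
\qquad
\mathbf F=A_1\cup B_1 .
\]
Thus $\mathbf P\setminus\mathbf F\subseteq A\cup B\cup A_2\cup B_2$. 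We will show that each of these four sets meets $X_1$ only inside $A_1$. Then every vertex of $\mathbf P\setminus\mathbf F$ lying in $X_1$ would lie in $A_1\subseteq\mathbf F$, which is impossible, so $\mathbf P\setminus\mathbf F\subseteq\overline{X_1}$.

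We treat the four sets one at a time.
\begin{itemize}
\item $B\subseteq\overline X\subseteq\overline{X_1}$, since $X_1\subseteq X$.
\item $A_2\subseteq X_2$, and $X_2$ is disjoint from $X_1$.
\item $A\cap X_1\subseteq A_1$ by the first niceness clause with $i=1$.
\item $B_2\cap X_1\subseteq A_1$ by the second niceness clause with $i=1$, namely $B_{3-i}\cap X_i\subseteq A_i$.
\end{itemize}
The external clause $B_i\cap\overline X\subseteq B$ is not needed here. The containments \eqref{eq:nice-triple-containment} already encode the same information, so one could also argue from them directly: $A\subseteq A_1\cup A_2$ and $B_2\subseteq A_1\cup B$ place these sets inside $\mathbf F\cup X_2\cup\overline X$.

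There is no real obstacle. The only point needing care is the set convention: occurrence tuples are read as sets of entries. A vertex that occurs in $\mathbf F$ and also elsewhere is removed by the difference, which is exactly what the argument uses. The second inclusion follows by swapping the indices $1$ and $2$ and replacing $\mathbf F$ by $\mathbf T$.
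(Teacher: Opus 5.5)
Your proof is correct and is essentially the paper's argument: the paper uses the containment $\mathbf P\subseteq A_1\cup A_2\cup B$ from the proof of Lemma~\ref{lem:shared-carrier} and notes that deleting $\mathbf F\supseteq A_1$ leaves $A_2\cup B\subseteq\overline{X_1}$, whereas your direct check of $A$, $B$, $A_2$, $B_2$ against $X_1$ shows that only the two basic niceness clauses are needed. You are right that the external clause $B_i\cap\overline X\subseteq B$ is not needed for your main argument; it matters only for the size bound on $\mathbf P$. Your alternative route via \eqref{eq:nice-triple-containment} does use it, however, because $B_2\subseteq A_1\cup B$ was derived from that clause.
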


\begin{proof}
Use the notation $A_1,A_2,B$ from the proof of
Lemma~\ref{lem:shared-carrier}. Every vertex of $\mathbf P$ lies in
$A_1\cup A_2\cup B$. Since $A_1$
occurs in $\mathbf F$, removing the vertices of $\mathbf F$ leaves only
vertices in $A_2\cup B$. The other inclusion is symmetric.
\end{proof}

Suppose the current position contains $\mathbf S$ and a marker $v$. Spoiler
acquires the distinct vertices in
$\mathbf P\setminus(\mathbf S\cup\{v\})$ one at a time. The target
occurrences are obtained from the replies through the fixed source occurrence
maps. After each reply, Spoiler checks the full enlarged position, including
all incidences between the newly acquired carrier vertex and the old marker.
A disagreement in equality, adjacency, or input colour is an atomic win.
Otherwise the resulting position contains at most $\ell(\mathcal D)+1$ live
roles, represented by $\mathbf P$ and $v$. This construction uses no decomposition
of $H$.

We use the semantic stable palettes $\lambda_{\mathbf A}$ defined in
Section~\ref{subsec:source-preliminaries}. A palette is the combined
colouring determined by one canonical colouring or by a finite tuple of
canonical colourings and Boolean coordinates. We call a palette stable when
this combined colouring is stable under colour refinement in the graph under
consideration. In particular, every stable palette is equitable. A factor
of a palette is a colouring obtained from it by one fixed map on colours.
The histogram of a palette records the multiplicity of each canonical
colour. For a colouring $\alpha$ and an ordered occurrence $\mathbf Y$, the notation
$\alpha^{\mathbf Y}$ means that the equality vector of $\mathbf Y$ is added to the initial
colours. We write $\alpha\equiv\beta$ when the two colourings induce the same
partition. Whenever an argument passes through this partition equivalence,
we relabel one colour set by the induced bijection between its classes before
comparing colour maps. This relabelling changes neither the partition nor any
component predicate.

All lookups are source-fixed maps on canonical stable types. Their finite
common-round representatives and counting-logic formulas are constructed in
Section~\ref{sec:key-enforcement}. The finite-to-complete passage is stated
formally in Lemma~\ref{lem:finite-complete-type-bridge}. Source-derived flips, their zero
extensions, and their lifts use the conventions of
Section~\ref{subsec:source-preliminaries}. A common lifted flip means one
source-fixed flip table that is zero-extended and pulled back on both graphs
through the same source-fixed map on their common canonical colour alphabet.
It therefore toggles corresponding colour pairs by the same rule. In
particular, later refinement does not change the flipped edge relation.

We make the component conventions explicit. For a graph $K$ and a fixed flip $f$, let
\[
\mathcal C_f(K)
=
\{D:D\text{ is a connected component of }K^f\}
\]
be the full-component partition of the flipped graph. For a flip
$f_{\mathbf A}$ defined through an anchor occurrence $\mathbf A$, we also
write
$\mathcal C_{\mathbf A,f_{\mathbf A}}(K)=\mathcal C_{f_{\mathbf A}}(K)$.
For $W\subseteq V(K)$, define the associated intersection partition by
\begin{equation}
\mathcal C_{\mathbf A,f_{\mathbf A}}(K)\mathbin{|}W
=
\{W\cap D:D\in\mathcal C_{\mathbf A,f_{\mathbf A}}(K),\ W\cap D\neq\varnothing\}.
\label{eq:intersection-partition}
\end{equation}
This is the partition obtained by intersecting $W$ with full components. It
is not, in general, the component partition of the induced graph
$K^{f_{\mathbf A}}[W]$.

For $y,u\in V(K)$, write
\begin{equation}
\operatorname{Comp}_{f}^{K}(y,u)=1
\quad\Longleftrightarrow\quad
y\text{ and }u\text{ lie in the same member of }\mathcal C_f(K).
\label{eq:component-coordinate}
\end{equation}
For an anchored flip $f_{\mathbf A}$, let
$[\mathbf A,y]_{f_{\mathbf A}}^{K}$ denote the unique full
component containing $y$ and abbreviate
\[
\operatorname{Comp}_{\mathbf A,f_{\mathbf A}}^{K}(y,u)
:=\operatorname{Comp}_{f_{\mathbf A}}^{K}(y,u).
\]
Superscripts are omitted when the graph is clear. For later use, set
\begin{equation}
J_y^{\mathbf A}(u)=\operatorname{Comp}_{\mathbf A,f_{\mathbf A}}(y,u).
\label{eq:J-coordinate}
\end{equation}

At the current node the induction state has the form
\begin{equation}
\bigl(G[C],\lambda_{\mathbf S,v}|C\bigr)
\not\cong
\bigl(H[C'],\lambda'_{\mathbf S',v'}|C'\bigr),
\label{eq:recursive-state}
\end{equation}
where $C=[\mathbf S,v]_{f_{\mathbf S}}^{G}\subseteq X$ and
$C'=[\mathbf S',v']_{f_{\mathbf S}}^{H}$. Only $\mathbf S,v$ and their
target mates remain in this normalized position. We relabel the children so
that $v\in X_1$.

\subsection{Remote-union locality}
\label{sec:remote-locality}

The remaining statements in this section are independent of the
decomposition parameters.

Fix a graph $K$, a stable canonical colouring
$\alpha\colon V(K)\to\Sigma$, and a flip $f$ pulled back to $\Sigma$. Let
$\mathbf Y=(y_1,\ldots,y_m)$ be an ordered marker occurrence, possibly
empty. Define
\begin{equation}
R_K(\mathbf Y)
=
\bigcup\{D\in\mathcal C_f(K):D\cap\{y_1,\ldots,y_m\}=\varnothing\}.
\label{eq:remote-union}
\end{equation}
Thus $R_K(\mathbf Y)$ is the union of the full components of $K^f$ that
avoid every entry of $\mathbf Y$. In particular,
$R_K(())=R_K(\varnothing)=V(K)$. Let $\pi_{\mathbf Y}$ forget the equality
vector of $\mathbf Y$ from the alphabet of $\alpha^{\mathbf Y}$ to
$\Sigma$, and set
\[
f^{\mathbf Y}(c,d)
=f\bigl(\pi_{\mathbf Y}(c),\pi_{\mathbf Y}(d)\bigr).
\]
The graphs obtained by evaluating $f$ through $\alpha$ and by evaluating
$f^{\mathbf Y}$ through $\alpha^{\mathbf Y}$ have the same edge relation.
We denote both by
$K^f$. Put
\begin{equation}
\delta_K=\operatorname{Stab}_{K}(\alpha^{\mathbf Y}),
\qquad
\widetilde\delta_K
=\operatorname{Stab}_{K^{f^{\mathbf Y}}}(\alpha^{\mathbf Y}).
\label{eq:two-stabilizations}
\end{equation}

\begin{lemma}[Remote-union locality]
\label{lem:remote-union-locality}
There is a colour set $\Delta_K$ and a function
$g_K\colon\Sigma\to\Delta_K$, determined by
$K,f,\alpha$, and $\mathbf Y$, such that
\begin{equation}
\delta_K|R_K(\mathbf Y)=g_K\circ\alpha|R_K(\mathbf Y).
\label{eq:remote-local-function}
\end{equation}
The same function works on the entire remote union.
\end{lemma}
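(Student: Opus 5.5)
The plan is to pass to the flipped graph $K^f$. There, full components are closed under adjacency, so individualizing the markers $\mathbf Y$ cannot propagate into the remote union $R=R_K(\mathbf Y)$. I then return to $K$ through Proposition~\ref{prop:gn-flip-colour-refinement}. Two structural facts are used throughout:
\begin{itemize}
\item[(i)] Every $K^f$-neighbour of a vertex lies in its own full component. Hence $R$ and $V(K)\setminus R$ are each closed under $K^f$-adjacency.
\item[(ii)] No entry of $\mathbf Y$ lies in $R$. Hence the equality vector of $\mathbf Y$ is identically zero on $R$, and $\alpha^{\mathbf Y}|R=(\alpha,\mathbf 0)|R$.
\end{itemize}

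First I show that $\alpha$ is stable in $K^f$ and not only in $K$. Since $\alpha$ is stable in $K$, $\operatorname{Stab}_K(\alpha)\equiv\alpha$. The map $f$ is a flip function for $(K,\alpha)$, so Proposition~\ref{prop:gn-flip-colour-refinement} gives $\operatorname{Stab}_{K^f}(\alpha)\equiv\operatorname{Stab}_K(\alpha)\equiv\alpha$. Thus $\alpha$ is equitable in $K^f$: the number of $K^f$-neighbours of $u$ of any colour $c\in\Sigma$ is a function $n(\alpha(u),c)$.

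Next I build an explicit colouring $\beta$ and check that it is an equitable refinement of $\alpha^{\mathbf Y}$ in $K^f$:
\begin{itemize}
\item $\beta(u)=(\alpha(u),\mathrm{rem})$ for $u\in R$;
\item $\beta(u)=(\widetilde\delta_K(u),\mathrm{near})$ for $u\notin R$.
\end{itemize}

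$\beta$ refines $\alpha^{\mathbf Y}$. On $R$ this follows from (ii). Off $R$ it holds because $\widetilde\delta_K$ refines $\alpha^{\mathbf Y}$. The two tags keep the two regions in separate classes.

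$\beta$ is equitable in $K^f$. Take $u\in R$. By (i) all its $K^f$-neighbours are in $R$, so it has $n(\alpha(u),c)$ neighbours in the class $(c,\mathrm{rem})$ and none in any $\mathrm{near}$ class. Take $u\notin R$. By (i) all its $K^f$-neighbours lie outside $R$. So its count in the class $(d,\mathrm{near})$ equals its count in the $\widetilde\delta_K$-class $d$. This count depends only on $\widetilde\delta_K(u)$, because $\widetilde\delta_K$ is stable in $K^f$; that $d$ may also contain $R$-vertices does not matter. It has no neighbours in $\mathrm{rem}$ classes.

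The stable colouring $\widetilde\delta_K=\operatorname{Stab}_{K^f}(\alpha^{\mathbf Y})$ is the coarsest equitable refinement of $\alpha^{\mathbf Y}$ in $K^f$. Since $\beta$ is such a refinement, $\beta$ refines $\widetilde\delta_K$. In particular, on $R$ the value $\widetilde\delta_K(u)$ depends only on $\alpha(u)$. This gives a single function $\tilde g\colon\Sigma\to\widetilde\Delta$ with $\widetilde\delta_K|R=\tilde g\circ\alpha|R$. It is one function on the whole remote union, because no component-dependent data entered $\beta|R$.

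Finally I transfer this back to $K$. The lifted map $f^{\mathbf Y}$ is a flip function for $(K,\alpha^{\mathbf Y})$ and yields the same edge relation $K^f$. Proposition~\ref{prop:gn-flip-colour-refinement} therefore gives $\delta_K\equiv\widetilde\delta_K$. Following the relabelling convention stated earlier, let $\iota$ be the induced bijection between the class sets. Setting $\Delta_K$ to be the colour set of $\delta_K$ and $g_K=\iota\circ\tilde g$ (restricted to colours realised on $R$, and extended arbitrarily elsewhere) gives \eqref{eq:remote-local-function}. All ingredients depend only on $K,f,\alpha,\mathbf Y$.

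I expect the main obstacle to be the equitability check for $\beta$ at vertices outside $R$. That check needs the $\mathrm{near}/\mathrm{rem}$ tag, so that $\widetilde\delta_K$-classes meeting both regions do not spoil the counts. It also needs (i), which holds only in $K^f$ and not in $K$. This is why the argument must pass through $\widetilde\delta_K$ and not work with $\delta_K$ directly.
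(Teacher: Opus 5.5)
Your proof is correct and follows the paper's route: pass to $K^f$, use Proposition~\ref{prop:gn-flip-colour-refinement} to make $\alpha$ equitable there and to get $\delta_K\equiv\widetilde\delta_K$, show that $\widetilde\delta_K$ factors through $\alpha$ on $R_K(\mathbf Y)$, and relabel. The only difference is the middle step: the paper inducts over refinement rounds, showing that each round-$i$ colour on the remote union is a function of $\alpha$ because no flipped edge leaves it, whereas you exhibit the equitable refinement $\beta$ of $\alpha^{\mathbf Y}$ and invoke the coarsest-equitable-refinement characterization, which the paper itself uses in Lemma~\ref{lem:successive-stabilization}; both arguments are valid.
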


\begin{proof}
The colour-refinement transfer across the flip $f$ and its pullback
$f^{\mathbf Y}$
from
Proposition~\ref{prop:gn-flip-colour-refinement} gives
\begin{equation}
\operatorname{Stab}_{K^f}(\alpha)\equiv\alpha,
\qquad
\widetilde\delta_K\equiv\delta_K.
\label{eq:flip-stabilization-transfer}
\end{equation}
Work first in the fixed flipped graph $K^f$. At round zero, every equality
coordinate associated with $\mathbf Y$ is false on $R_K(\mathbf Y)$.
Suppose that the round-$i$ colour of every vertex in $R_K(\mathbf Y)$ is
determined by its $\alpha$-colour. No flipped edge leaves
$R_K(\mathbf Y)$ because this set is a union
of full components. Since $\alpha$ is equitable in $K^f$, vertices with the
same $\alpha$-colour have equal neighbour counts into every $\alpha$-class.
For vertices in the remote union, these global counts equal the counts inside
the remote union. Every round-$i$ class occurring there is determined by an
$\alpha$-class, so the round-$(i+1)$ colours also agree. Induction proves
that $\widetilde\delta_K|R_K(\mathbf Y)$ is a function of
$\alpha|R_K(\mathbf Y)$. Relabel the colours of $\widetilde\delta_K$ by the class
bijection supplied by the second equivalence in
\eqref{eq:flip-stabilization-transfer}. Composing the local function with
this relabelling transfers the conclusion to $\delta_K$.
\end{proof}

\subsection{Preannounced common maps}
\label{sec:preannounced-common-maps}

Lemma~\ref{lem:remote-union-locality} produces a function separately on each
graph. The next statement gives the condition under which those functions
coincide.

Let $\alpha_G\colon V(G)\to\Sigma$ and
$\alpha_H\colon V(H)\to\Sigma$ be stable canonical palettes with a common
colour alphabet. Let $f$ be a common lifted flip. Let
$\mathbf Y=(y_1,\ldots,y_m)$ and
$\mathbf Y'=(y'_1,\ldots,y'_m)$ be corresponding marker occurrences,
possibly empty. Define $R_G(\mathbf Y)$ and
$R_H(\mathbf Y')$ by \eqref{eq:remote-union}. Suppose
that $r_G\colon V(G)\to\Delta$ and $r_H\colon V(H)\to\Delta$ are colourings
with a common codomain and satisfy
\[
  r_G=g_G\circ\alpha_G\text{ on }R_G(\mathbf Y),
  \qquad
  r_H=g_H\circ\alpha_H\text{ on }R_H(\mathbf Y')
\]
for two possibly different functions $g_G,g_H\colon\Sigma\to\Delta$.

\begin{lemma}[Preannounced common map]
\label{lem:preannounced-common-map}
If a bijection
$\sigma:V(G)\to V(H)$ preserves the pointwise palette
\begin{equation}
\left(
\alpha_G(u),r_G(u),
\bigl(\operatorname{Comp}_{f}^{G}(y_i,u)\bigr)_{i\in[m]}
\right),
\label{eq:preannounced-key}
\end{equation}
where the corresponding target value at $\sigma(u)$ uses
$\alpha_H,r_H,\mathbf Y'$, then $g_G(c)=g_H(c)$ for every colour $c$ realised in
either remote union.
\end{lemma}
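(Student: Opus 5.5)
The plan is a direct transfer argument: the component coordinates in the key \eqref{eq:preannounced-key} recognise membership in the remote union, so $\sigma$ must carry $R_G(\mathbf Y)$ onto $R_H(\mathbf Y')$. The colour coordinates then force the two local functions to agree on every realised colour. No use of stability or of Lemma~\ref{lem:remote-union-locality} is needed beyond the hypotheses already stated.

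First I will characterise the remote union pointwise. By \eqref{eq:remote-union} and \eqref{eq:component-coordinate}, a vertex $u\in V(G)$ lies in $R_G(\mathbf Y)$ if and only if the full component of $G^f$ containing $u$ contains no entry $y_i$. Equivalently,
\[
  \operatorname{Comp}_f^G(y_i,u)=0 \quad\text{for every } i\in[m].
\]
This holds because $\operatorname{Comp}$ is the equivalence relation of lying in one member of $\mathcal C_f(G)$, so in particular $\operatorname{Comp}_f^G(y_i,y_i)=1$. The same description holds in $H$ with $\mathbf Y'$, and it uses the same common lifted flip $f$ on the target. If $m=0$, both remote unions are the whole vertex sets and the condition is vacuous.

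Next I will show that $\sigma$ restricts to a bijection $R_G(\mathbf Y)\to R_H(\mathbf Y')$. Since $\sigma$ preserves the whole key, it preserves the Boolean vector $\bigl(\operatorname{Comp}_f^G(y_i,u)\bigr)_{i\in[m]}$. By the characterisation above, $u\in R_G(\mathbf Y)$ if and only if $\sigma(u)\in R_H(\mathbf Y')$. Because $\sigma$ is a bijection of the whole vertex sets, this gives $\sigma(R_G(\mathbf Y))=R_H(\mathbf Y')$.

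Finally I will compare the functions. Let $c$ be realised in $R_G(\mathbf Y)$, say $c=\alpha_G(u)$ with $u\in R_G(\mathbf Y)$. Then $\sigma(u)\in R_H(\mathbf Y')$, and preservation of the first key coordinate gives $\alpha_H(\sigma(u))=c$. Preservation of the second coordinate, together with the two local representations, yields
\[
  g_G(c)=r_G(u)=r_H(\sigma(u))=g_H(c).
\]
If instead $c$ is realised at some $u'\in R_H(\mathbf Y')$, I will apply the same argument to $u=\sigma^{-1}(u')$, which lies in $R_G(\mathbf Y)$ by the previous step. Hence every colour realised in either remote union is realised in both, and $g_G$ and $g_H$ agree there.

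I expect no genuine obstacle. The only point needing care is the first step, because the key records component coordinates rather than membership in $R$ directly. One must note that $\operatorname{Comp}$ is reflexive and is evaluated in the same flipped graph on each side, so the all-zero vector is exactly the remote-union condition in both graphs.
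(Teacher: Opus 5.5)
Your proposal is correct and follows essentially the same route as the paper. Like the paper, you identify each remote union as the all-zero region of the component-bit vector, show that $\sigma$ carries one all-zero region onto the other, and compare $g_G$ and $g_H$ through the jointly preserved pair $(\alpha,r)$. The only difference is cosmetic: the paper phrases the last step through the region-restricted joint multiplicities $N_G(c,d)=N_H(c,d)$, whereas your witness vertex $u$ and its image $\sigma(u)$ give the pointwise version of that count.
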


\begin{proof}
The component-bit vector in \eqref{eq:preannounced-key} defines the remote
union because it is the all-zero vector exactly on the full components that
avoid $\mathbf Y$. Pointwise preservation maps the all-zero region in $G$ onto the
all-zero region in $H$. Preservation of the complete pointwise palette then
gives equality of every region-restricted joint multiplicity. In detail, put
\begin{equation}
\begin{aligned}
N_G(c,d)&=\#\{u\in R_G(\mathbf Y):\alpha_G(u)=c,\ r_G(u)=d\},\\
N_H(c,d)&=\#\{u'\in R_H(\mathbf Y'):\alpha_H(u')=c,\ r_H(u')=d\}.
\end{aligned}
\label{eq:joint-remote-multiplicity}
\end{equation}
Then $N_G(c,d)=N_H(c,d)$ for all $(c,d)\in\Sigma\times\Delta$. Summing over
$d$ shows that an $\alpha$-colour is realised in one remote union exactly
when it is realised in the other. If the two graph-specific maps assigned
different values to such a colour $c$, then for $d=g_G(c)$ the first count
would be positive and the second would be zero. This is impossible.
\end{proof}

Separate marginal histograms do not suffice. The canonical factor and the complete remote component-bit vector must occur in the same pointwise announcement.

\subsection{Common component palettes}
\label{sec:component-palettes}

Fix corresponding ordered anchor occurrences $\mathbf A,\mathbf A'$ in
$G,H$ and corresponding markers $y,y'$. The source-fixed flip
$f_{\mathbf A}$ is understood with its zero extension to the universal
alphabet of complete canonical $\mathbf A$-types.

\begin{lemma}[Marked component palette]
\label{lem:marked-common-component-palette}
Suppose that $\lambda^G_{\mathbf A,y}$ and
$\lambda^H_{\mathbf A',y'}$ have the same global histogram and give the
markers the same singleton colour. There is one Boolean map $h$ on their
common complete-type alphabet such that
\begin{equation}
\operatorname{Comp}_{\mathbf A,f_{\mathbf A}}^{G}(y,u)
=h\bigl(\lambda^G_{\mathbf A,y}(u)\bigr)
\quad\text{and}\quad
\operatorname{Comp}_{\mathbf A',f_{\mathbf A}}^{H}(y',u')
=h\bigl(\lambda^H_{\mathbf A',y'}(u')\bigr).
\label{eq:common-component-map}
\end{equation}
\end{lemma}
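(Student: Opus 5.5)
We will show that in each graph the component coordinate $u\mapsto\operatorname{Comp}(y,u)$ is a union of stable classes of $\lambda_{\mathbf A,y}$, and that which classes belong to the marker's component can be read off the canonical colours themselves. The common map $h$ will then be defined purely on the type alphabet.

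First we place everything in the flipped graph. The colouring $\lambda^G_{\mathbf A,y}$ refines $\lambda^G_{\mathbf A}$ through the forgetful map $\pi$ of Section~\ref{subsec:source-preliminaries}. The lifted flip $f_{\mathbf A}\circ(\pi\times\pi)$ therefore toggles exactly the pairs toggled by $f_{\mathbf A}$ on $\lambda^G_{\mathbf A}$. By Proposition~\ref{prop:gn-flip-colour-refinement}, the stable colouring of $G^{f_{\mathbf A}}$ started from $\chi^{\mathbf A,y}$ induces the same partition as $\lambda^G_{\mathbf A,y}$. The same holds in $H$, because the same totalized table is evaluated there.

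Next we prove a component-bit recursion. Call a stable class $c$ \emph{$y$-linked} when it is reachable from the singleton marker class in the quotient graph whose edges are the pairs of classes $(c,d)$ with positive flipped neighbour count. In an equitable colouring of $K^f$, whenever two classes are adjacent in this quotient, every vertex of the first has a flipped neighbour in the second. Induction along quotient paths starting at the singleton class of $y$ therefore puts every vertex of a $y$-linked class into $[\mathbf A,y]$. Conversely, the component is a union of vertices in linked classes, since flipped edges never leave the set of linked classes. Hence $\operatorname{Comp}(y,u)=1$ exactly when $\lambda(u)$ is $y$-linked.

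The main obstacle is making the quotient graph a function of the canonical colours rather than of the individual graph. This is where the common-round codes are used. A flipped neighbour count between classes $c$ and $d$ is determined by the code of $c$ and the flip bit $f(\pi c,\pi d)$. If the flip bit is $0$, it equals the original neighbour count of $d$ read off the next-round multiset in $c$. If the flip bit is $1$, it equals $|d|$ minus that count, with $1$ subtracted when $c=d$. The class size $|d|$ is supplied by the common global histogram. So for colours $c,d$ realised in a graph, the predicate ``positive flipped count'' is a function of $(c,d)$ and the histogram alone, and the histogram is shared by $G$ and $H$.

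Since the markers receive the same singleton colour and the realised colour sets coincide, the quotient graphs of $G$ and $H$ are literally the same graph on the same colour set. We define $h(c)=1$ exactly when $c$ is realised and linked to the marker colour in this common quotient, and $h=0$ on unrealised types. The recursion then gives \eqref{eq:common-component-map} in both graphs.

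If the relabelling across the flip produces a mismatch between the class names of $K$ and $K^f$, we avoid it by computing the counts directly in $K$, as described above.
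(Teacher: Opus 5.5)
Your proposal is correct and is essentially the paper's proof. Both arguments express the flipped quotient count through the unflipped count stored in the common-round code of $c$, the flip bit $f_{\mathbf A}(\pi c,\pi d)$ and the shared class size $|d|$, conclude that $G$ and $H$ have the same quotient on the same realised colours, and define $h$ by reachability from the singleton marker colour. The paper additionally makes explicit the joint stable round at which the counts are read, and the symmetry $q^f(c,d)>0\Rightarrow q^f(d,c)>0$ that you use implicitly when propagating outward from the marker.

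One correction to your first step. You misapply Proposition~\ref{prop:gn-flip-colour-refinement}: refinement in $G^{f_{\mathbf A}}$ started from $\chi^{\mathbf A,y}$ can be strictly coarser than $\lambda^G_{\mathbf A,y}$, for instance when the flip erases the cross edges that separated two classes. Nothing depends on this claim, however, because your direct count formula already shows that $\lambda_{\mathbf A,y}$ is equitable in the flipped graph.
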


\begin{proof}
Let $\Omega$ be the common alphabet of complete marked types and let
$\pi_0\colon\Omega\to\Sigma$ be the canonical map that forgets the marker
coordinate. Pull the totalized source flip back by setting
\[
  \widehat f(c,d)
  =f_{\mathbf A}\bigl(\pi_0(c),\pi_0(d)\bigr).
\]
This is one table on $\Omega^2$ for both graphs. In particular, if a factor
type is realised only on the target, the zero extension of
$f_{\mathbf A}$ assigns the value zero to every pair involving that type.

For $K\in\{G,H\}$ and realised colours $c,d\in\Omega$, let $s_K(d)$ be the
size of the $d$-class and let $q_K(c,d)$ be the number of neighbours in that
class of a vertex of colour $c$. Equitability makes $q_K(c,d)$ well defined.
Run colour refinement on $G\mathbin{\dot\cup}H$ with the corresponding
marked anchor vectors and universal type names. Choose a round $R$ at which
the joint refinement is stable. Every realised round-$R$ class has a unique
complete continuation, as formalized in
Lemma~\ref{lem:finite-complete-type-bridge}. Equality of a complete colour
$c$ includes equality of its round-$(R+1)$ code. The coordinate belonging
to the round-$R$ class with continuation $d$ gives
\[
q_G(c,d)=q_H(c,d).
\]
The common histogram gives $s_G(d)=s_H(d)$. It also shows that the two
graphs realise exactly the same colours, so no target-only marked colour is
left to compare. After the flip, the quotient count is
\begin{equation}
q_K^f(c,d)=
\begin{cases}
q_K(c,d) & \text{if }\widehat f(c,d)=0,\\
s_K(d)-\mathbf 1_{c=d}-q_K(c,d) & \text{if }\widehat f(c,d)=1.
\end{cases}
\label{eq:flipped-quotient-count}
\end{equation}
Thus the two flipped graphs have the same weighted quotient on the realised
marked colours.

The common marker colour is a singleton class. If $q_K^f(c,d)>0$, then
undirected equitability gives $q_K^f(d,c)>0$. Hence every vertex of the
$d$-class has a neighbour in the $c$-class. Induction from the marker class
shows that an entire stable colour class lies in the marker component exactly
when its quotient vertex is reachable from the marker colour. Reachability
in the common quotient defines $h$ on the realised colours. Set $h=0$ on
the unrealised colours of $\Omega$. This proves
\eqref{eq:common-component-map}. In the notation of
\eqref{eq:intersection-partition}, the corresponding cell is
\[
W\cap[\mathbf A,y]_{f_{\mathbf A}}^{G}
=
\{u\in W:h(\lambda^G_{\mathbf A,y}(u))=1\},
\]
and the same formula selects the target-graph cell.
\end{proof}

Let $\zeta_G\colon V(G)\to\Gamma$ and
$\zeta_H\colon V(H)\to\Gamma$ be stable palettes with one common codomain.
Suppose that both marker classes are singletons and that one fixed map
$\pi\colon\Gamma\to\Sigma$ satisfies
\begin{equation}
\lambda^G_{\mathbf A}=\pi\circ\zeta_G,
\qquad
\lambda^H_{\mathbf A'}=\pi\circ\zeta_H.
\label{eq:common-component-factor}
\end{equation}

\begin{lemma}[Preannounced component palette]
\label{lem:common-component-palettes}
Suppose that a bijection $\sigma\colon V(G)\to V(H)$ sends $y$ to $y'$ and
preserves the joint pointwise palette, so that for every $u\in V(G)$,
\begin{equation}
\bigl(\zeta_G(u),\operatorname{Comp}_{\mathbf A,f_{\mathbf A}}^{G}(y,u)\bigr)
=
\bigl(\zeta_H(\sigma(u)),
\operatorname{Comp}_{\mathbf A',f_{\mathbf A}}^{H}(y',\sigma(u))\bigr).
\label{eq:joint-component-palette}
\end{equation}
Then there is one map $b\colon\Gamma\to\{0,1\}$ such that
\[
\operatorname{Comp}_{\mathbf A,f_{\mathbf A}}^{G}(y,u)
=b(\zeta_G(u))
\quad\text{and}\quad
\operatorname{Comp}_{\mathbf A',f_{\mathbf A}}^{H}(y',u')
=b(\zeta_H(u')).
\]
\end{lemma}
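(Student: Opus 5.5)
The plan is to reduce to Lemma~\ref{lem:preannounced-common-map} after first showing that, on each graph separately, the component coordinate is a function of the palette $\zeta$. On a single graph $K\in\{G,H\}$, I would argue that $\operatorname{Comp}_{\mathbf A,f_{\mathbf A}}^{K}(y,\cdot)$ is constant on every $\zeta_K$-class. Since $\zeta_K$ factors through $\lambda_{\mathbf A}$ by \eqref{eq:common-component-factor}, the flip $f_{\mathbf A}$ lifts to $\zeta_K$ through $\pi$ and toggles the same pairs. The lifted table is one table on $\Gamma^2$, hence common to both graphs. The marker class of $\zeta_K$ is a singleton by hypothesis, and $\zeta_K$ is stable, hence equitable in $K$.

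Next I would repeat the quotient computation \eqref{eq:flipped-quotient-count} inside $K$ alone. The flipped neighbour counts between $\zeta_K$-classes are well defined, so $\zeta_K$ remains equitable in $K^{f}$. Symmetry of positivity then shows that an entire class is either contained in the marker component or disjoint from it, according to reachability from the singleton marker class in the quotient. This yields a graph-specific map $b_K\colon\Gamma\to\{0,1\}$ with $\operatorname{Comp}^{K}(y,u)=b_K(\zeta_K(u))$. Alternatively, I could obtain this step from Lemma~\ref{lem:marked-common-component-palette} on each graph, but the direct argument avoids needing equal histograms in advance.

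To synchronize $b_G$ and $b_H$, I would note that $\sigma$ preserves the pair in \eqref{eq:joint-component-palette}. For each colour $c$ realised in $G$, choose $u$ with $\zeta_G(u)=c$. Then $\zeta_H(\sigma(u))=c$, and
\[
  b_G(c)=\operatorname{Comp}^{G}(y,u)=\operatorname{Comp}^{H}(y',\sigma(u))=b_H(c).
\]
Bijectivity of $\sigma$ gives the symmetric statement for colours realised in $H$, so the two graphs realise the same colours and the maps agree there. I then define $b$ to be this common value on realised colours and zero elsewhere. This is the same counting reasoning as in Lemma~\ref{lem:preannounced-common-map}, with $r=\operatorname{Comp}$ and $\mathbf Y$ empty.

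The main obstacle is the first step, namely that the component coordinate is constant on $\zeta_K$-classes. This requires that $\zeta_K$, a refinement of $\lambda_{\mathbf A}$ that is merely stable, remains equitable after the flip. It must also be checked that the convention $q^f(c,c)=s-1-q$, which excludes self-pairs, is handled correctly for classes that are not singletons. Stability of $\zeta_K$ together with the fact that the flip is defined through $\pi\circ\zeta_K$ should suffice. I would verify the quotient formula carefully, including the diagonal term $\mathbf 1_{c=d}$.
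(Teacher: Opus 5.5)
Your proposal is correct and follows the paper's proof. In both, each graph gets its own Boolean map $b_K$ from equitability of $\zeta_K$ in the flipped graph and quotient reachability from the singleton marker class, and then $b_G$ and $b_H$ are shown to agree on the common realised colours and are extended by zero. The only differences are cosmetic: you check equitability after the flip directly via the flipped quotient count instead of citing Proposition~\ref{prop:gn-flip-colour-refinement}, and you compare $b_G$ and $b_H$ pointwise through $\sigma$ instead of applying Lemma~\ref{lem:preannounced-common-map} with the empty marker occurrence.
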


\begin{proof}
Pull the totalized flip back through the common
factor in \eqref{eq:common-component-factor}. The resulting table
\[
  \widehat f_{\zeta}(c,d)
  =f_{\mathbf A}\bigl(\pi(c),\pi(d)\bigr)
\]
has codomain $\{0,1\}$ and is the same on both graphs, including on colours
whose factors are source-unrealised. By
Proposition~\ref{prop:gn-flip-colour-refinement}, each $\zeta_K$ remains an
equitable partition after this flip. Quotient reachability from its singleton
marker class therefore gives a graph-specific Boolean map
$b_K\colon\Gamma\to\{0,1\}$ whose value is the component predicate.

Equation~\eqref{eq:joint-component-palette} first implies that the realised
$\Gamma$-colours coincide. A target-only colour would violate preservation
of the first coordinate and hence cannot remain under the hypothesis. Apply
Lemma~\ref{lem:preannounced-common-map} with the empty marker occurrence,
base palettes $\zeta_G,\zeta_H$, and Boolean factors $b_G,b_H$. The remote
unions are the whole vertex sets, and the preserved pointwise palette is
exactly \eqref{eq:joint-component-palette}. The lemma gives
$b_G(c)=b_H(c)$ for every realised $c$. Their common restriction, extended
by zero to unrealised colours, is the required Boolean map. No reduct between
two graph-specific palettes is used.
\end{proof}

\subsection{Canonical reducts and successive stabilization}
\label{sec:canonical-reducts}

\begin{lemma}[Canonical forgetful reduct]
\label{lem:canonical-forgetful-reduct}
Let $\mathbf U$ be an ordered anchor occurrence. Let $\mathbf Q$ be obtained
from $\mathbf U$ by a fixed source occurrence map, which may repeat or
reorder coordinates. There is a graph-independent map
$\pi_{\mathbf U,\mathbf Q}$ on complete canonical stable types such that,
on every graph,
\begin{equation}
\lambda_{\mathbf Q}
=\pi_{\mathbf U,\mathbf Q}\circ\lambda_{\mathbf U}.
\label{eq:canonical-forgetful-map}
\end{equation}
\end{lemma}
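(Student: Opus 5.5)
We construct $\pi_{\mathbf U,\mathbf Q}$ round by round on the common-round codes $(c^{K,\mathbf U}_i(v))_{i\geq0}$. The result is a family of graph-independent maps $\pi_i$ with
\[
  c^{K,\mathbf Q}_i=\pi_i\circ c^{K,\mathbf U}_i
\]
on every graph $K$. The stable map is then the sequence-wise application $(c_i)_{i\geq0}\mapsto(\pi_i(c_i))_{i\geq0}$. Since $\lambda_{\mathbf U}$ and $\lambda_{\mathbf Q}$ are represented by these code sequences, \eqref{eq:canonical-forgetful-map} follows at once, and on finite graphs it induces the correct stable partition.

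At round zero, write the occurrence map as $\theta\colon[|\mathbf Q|]\to[|\mathbf U|]$ with $q_j=u_{\theta(j)}$. Define
\[
  \pi_0\bigl(\chi,(e_i)_{i}\bigr)=\bigl(\chi,(e_{\theta(j)})_{j}\bigr).
\]
Then $\mathbf 1_{v=q_j}=\mathbf 1_{v=u_{\theta(j)}}$ gives $c_0^{K,\mathbf Q}=\pi_0\circ c_0^{K,\mathbf U}$. This map retains, repeats, and reorders the equality coordinates, and it depends only on $\theta$, not on $K$.

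For the inductive step, a round-$(i+1)$ code has the form $(c,M)$, where $M$ is a finite multiset of round-$i$ codes. Define
\[
  \pi_{i+1}(c,M)=\bigl(\pi_i(c),\pi_i^{\ast}(M)\bigr),
\]
where $\pi_i^{\ast}$ is the pushforward of multisets. The pushforward aggregates multiplicities over the fibres of $\pi_i$. Applying the induction hypothesis inside the neighbour multiset gives
\[
  \pi_i^{\ast}\bigl\{\!\!\bigl\{c_i^{K,\mathbf U}(w):w\in N_K(v)\bigr\}\!\!\bigr\}
  =\bigl\{\!\!\bigl\{c_i^{K,\mathbf Q}(w):w\in N_K(v)\bigr\}\!\!\bigr\},
\]
which yields $c_{i+1}^{K,\mathbf Q}=\pi_{i+1}\circ c_{i+1}^{K,\mathbf U}$. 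The maps $\pi_i$ are defined on the universal alphabet of all syntactically possible codes, so they are total and graph-independent, including on types realised only in a target graph.

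The only delicate point is passing from rounds to complete stable types. A complete type is an infinite sequence, and two graphs may stabilize at different rounds. We do not need to choose a stabilization round: applying $\pi_i$ coordinatewise to the sequence is well defined regardless. If a finite-round representative is preferred, Lemma~\ref{lem:finite-complete-type-bridge} provides the finite-to-complete passage, and compatibility of the $\pi_i$ with truncation gives the same map.
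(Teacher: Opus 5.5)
Your proof is correct and takes the same approach as the paper. Round zero retains, repeats, and reorders the anchor equality coordinates via the occurrence map. Each later round pushes the neighbour multiset forward along the fibres of $\pi_i$, which is the paper's ``summing neighbour counts over each fibre''. The complete map is then obtained coordinatewise on the common-round code sequences.
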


\begin{proof}
At round zero, retain, repeat, and reorder the anchor equality coordinates
according to the fixed source occurrence map from $\mathbf U$ to
$\mathbf Q$. Suppose that a common map $\pi_i$ sends every round-$i$ fine
type to its round-$i$ coarse type. A round-$(i+1)$ fine type records the
neighbour count in every round-$i$ fine type. Summing these counts over each
fibre of $\pi_i$ gives exactly the neighbour counts in every round-$i$
coarse type. This defines one graph-independent map $\pi_{i+1}$. Induction
over the common-round complete codes gives
$\pi_{\mathbf U,\mathbf Q}$.
\end{proof}

Only the fine-to-coarse direction is asserted. A coarse type need not determine a finer type.

\begin{lemma}[Successive stabilization]
\label{lem:successive-stabilization}
For ordered anchor occurrences $\mathbf A$ and $\mathbf B$,
\begin{equation}
\operatorname{Stab}\bigl(\operatorname{Stab}(\chi^{\mathbf A})^{\mathbf B}\bigr)
\equiv
\operatorname{Stab}(\chi^{\mathbf A,\mathbf B}).
\label{eq:successive-stabilization}
\end{equation}
\end{lemma}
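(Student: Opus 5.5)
The plan is to prove the two refinement inequalities between the partitions and conclude equality of the induced partitions. Write $\beta=\operatorname{Stab}(\chi^{\mathbf A})$, $\gamma_1=\operatorname{Stab}(\beta^{\mathbf B})$ and $\gamma_2=\operatorname{Stab}(\chi^{\mathbf A,\mathbf B})$. We use two standard monotonicity facts about colour refinement. First, if an initial colouring $\alpha$ refines $\alpha'$, then $\operatorname{Stab}(\alpha)$ refines $\operatorname{Stab}(\alpha')$; this follows by induction over rounds. Second, $\operatorname{Stab}(\alpha)$ is the coarsest stable (equitable) colouring that refines $\alpha$.

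For the direction "$\gamma_2$ refines $\gamma_1$", observe that $\chi^{\mathbf A,\mathbf B}$ refines $\chi^{\mathbf A}$. Hence $\gamma_2$ refines $\operatorname{Stab}(\chi^{\mathbf A})=\beta$. Since $\gamma_2$ also records the equality vector of $\mathbf B$ at round zero, and refinement never merges classes, $\gamma_2$ refines $\beta^{\mathbf B}$. Because $\gamma_2$ is stable, the coarsest-stable characterization gives that $\gamma_2$ refines $\operatorname{Stab}(\beta^{\mathbf B})=\gamma_1$.

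For the direction "$\gamma_1$ refines $\gamma_2$", note that $\beta$ refines $\chi^{\mathbf A}$, since stabilization only splits classes. Therefore $\beta^{\mathbf B}$ refines $\chi^{\mathbf A}$ with the $\mathbf B$-equality coordinates appended, which is exactly $\chi^{\mathbf A,\mathbf B}$. Monotonicity then gives that $\gamma_1$ refines $\operatorname{Stab}(\chi^{\mathbf A,\mathbf B})=\gamma_2$. Both refinements together yield \eqref{eq:successive-stabilization} as an equality of partitions.

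The only delicate point is the meaning of $\equiv$, which compares partitions and not canonical names. The canonical codes on the two sides are different objects: the left-hand side has nested common-round codes. So we assert only partition equality, as the notation requires. If a cross-graph naming is needed later, we relabel by the induced class bijection, as in the convention stated before Lemma~\ref{lem:remote-union-locality}.

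The main obstacle I expect is stating the coarsest-stable characterization cleanly. The way to do it is the standard induction: if $\theta$ is stable and refines $\alpha$, then by induction on $i$, $\theta$ refines the round-$i$ colour $c_i$. Indeed, equal $\theta$-colours give equal counts of neighbours in each $\theta$-class, hence in each $c_i$-class, since $c_i$-classes are unions of $\theta$-classes. Equal $c_i$-colours plus equal neighbour multisets give equal $c_{i+1}$-colours. Everything else is routine.
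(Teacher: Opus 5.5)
Your proof is correct and takes essentially the paper's route: both directions rest on $\operatorname{Stab}(\zeta)$ being the coarsest equitable refinement of $\zeta$. Your monotonicity step in the second direction is that same fact applied to the equitable refinement $\operatorname{Stab}(\beta^{\mathbf B})$ of $\chi^{\mathbf A,\mathbf B}$, and your reading of $\equiv$ as partition equality only matches the paper's convention.
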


\begin{proof}
We use the standard fact that $\operatorname{Stab}(\zeta)$ is the coarsest
equitable refinement of $\zeta$. Both sides are equitable refinements of the
input colouring in which every displayed anchor is individualized. The
right-hand side refines $\chi^{\mathbf A}$ and is equitable, so it refines
$\operatorname{Stab}(\chi^{\mathbf A})$. It also individualizes
$\mathbf B$, and hence it
refines the left-hand side by the coarsest-refinement fact. Conversely, the
left-hand side is an equitable refinement of
$\chi^{\mathbf A,\mathbf B}$, so it refines
the right-hand side by the same fact. The two stable partitions are
therefore equivalent.
\end{proof}

\section{Enforcing finite transition palettes}
\label{sec:key-enforcement}

The transition argument uses stable colour-refinement types and fixed lookups
on those types.  It also uses component predicates in source-derived flipped
graphs.  These objects are auxiliary colours rather than relation symbols of
the input vocabulary.  We show that every such colour can be enforced in the
ordinary bijective pebble game without increasing the claimed number of
pebbles.

Throughout the section, the two graphs have the same order \(n\) and are
given over one fixed finite graph vocabulary.  The vocabulary may contain a
fixed finite collection of unary input colours.  Let

\[
  \bar a=(a_1,\ldots,a_q)
  \quad\text{and}\quad
  \bar a'=(a'_1,\ldots,a'_q)
\]

be corresponding ordered anchor occurrences in the two graphs.  Repeated
entries are allowed.  All type names below are universal names.  In
particular, the same name has the same meaning under the assignments
\(\bar a\) and \(\bar a'\).

The empty graph is handled by the exact-order sentence.  We therefore assume
\(n\geq1\) below.

The formulas constructed here may depend on \(n\) and on the fixed source
graph.  This is permitted because the identification theorem is nonuniform.
Graphs of a different order are separated before the transition argument is
entered.

\subsection{Common-round colour-refinement types}

Fix \(q\geq 0\) and \(n\geq 1\).  We define finite sets
\(\mathcal T_{q,r,n}\) of possible order-\(n\), round-\(r\) types.  A
round-zero type records the truth values of all unary input colours and the
equality vector

\[
  \bigl([x=a_i]\bigr)_{i\in[q]}.
\]

Thus \(\mathcal T_{q,0,n}\) is finite.  Having defined
\(\mathcal T_{q,r,n}\), a member of
\(\mathcal T_{q,r+1,n}\) is a pair

\[
  t=(t_0,\mu),
  \qquad
  t_0\in\mathcal T_{q,r,n},
\]

where

\[
  \mu\colon\mathcal T_{q,r,n}\longrightarrow\{0,\ldots,n-1\}
\]

and \(\sum_s\mu(s)\leq n-1\).  Types that are not realised in any
order-\(n\) graph may be retained in this finite universe.  For a graph
\(G\), an anchor occurrence \(\bar a\), and \(v\in V(G)\), define
recursively

\[
  \operatorname{tp}^{G}_{0}(\bar a,v)
\]

to be the round-zero data above, and put

\begin{equation}
  \operatorname{tp}^{G}_{r+1}(\bar a,v)
  =
  \left(
    \operatorname{tp}^{G}_{r}(\bar a,v),
    \left(
      \left|
        N_G(v)\cap
        \{w:\operatorname{tp}^{G}_{r}(\bar a,w)=s\}
      \right|
    \right)_{s\in\mathcal T_{q,r,n}}
  \right).
  \label{eq:canonical-type-recursion}
\end{equation}

This is the usual canonical colour-refinement recursion after the anchor
positions have been individualized by their equality vectors.

\begin{lemma}[Finite type formulas]
\label{lem:finite-type-formulas}
For every \(t\in\mathcal T_{q,r,n}\) there is a formula

\[
  \theta^{r}_{t}(\bar x,x)
\]

such that, for every graph \(G\) of order \(n\),

\[
  G\models\theta^{r}_{t}(\bar a,v)
  \quad\Longleftrightarrow\quad
  \operatorname{tp}^{G}_{r}(\bar a,v)=t.
\]

The formula belongs to counting logic and uses only the \(q\) anchor
variables together with the two variable names \(x,y\).
\end{lemma}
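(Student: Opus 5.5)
The plan is induction on the round $r$, building $\theta^r_t(\bar x,x)$ so that its free variables are $\bar x$ and $x$ and it uses, apart from $\bar x$, only the two names $x,y$, which alternate roles between levels. More precisely, I will construct for each $r$ and $t$ two formulas, $\theta^r_t(\bar x,x)$ and $\theta^r_t(\bar x,y)$. The second is obtained from the first by swapping the names $x$ and $y$ throughout, so its correctness is immediate once the first is correct. This is the standard two-variable description of colour refinement, carried out with the anchors kept as free parameters.

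\emph{Base case.} A round-zero type $t$ consists of a unary atomic type $\chi$ and an equality vector $(e_i)_{i\in[q]}$. Set
\[
  \theta^0_t(\bar x,x)=\bigwedge_{P\in\mathcal U}(\neg)^{[\chi_P=0]}P(x)\wedge\bigwedge_{i\in[q]}(\neg)^{[e_i=0]}\,(x=x_i).
\]
This is quantifier-free and mentions only $\bar x$ and $x$.

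\emph{Inductive step.} Let $t=(t_0,\mu)\in\mathcal T_{q,r+1,n}$. For a finite universe and fixed order $n$, the equation ``exactly $m$ neighbours have type $s$'' is expressed by $\exists^{\geq m}y\,(E(x,y)\wedge\theta^r_s(\bar x,y))\wedge\neg\exists^{\geq m+1}y\,(E(x,y)\wedge\theta^r_s(\bar x,y))$, and for $m=0$ the positive conjunct is omitted. Take $\theta^{r+1}_t(\bar x,x)$ to be the conjunction of $\theta^r_{t_0}(\bar x,x)$ with these count clauses over the finitely many $s\in\mathcal T_{q,r,n}$. Correctness then follows from recursion \eqref{eq:canonical-type-recursion} together with the induction hypothesis applied to the $y$-version.

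The variable count relies on the lexical convention stated in the preliminaries. Inside the clause, $\theta^r_s(\bar x,y)$ may internally requantify $x$. That rebinding does not capture the occurrence of $x$ in the sibling atom $E(x,y)$, because the inner formula is a Boolean sibling bound under its own quantifiers. So no third name is needed, and the anchors $\bar x$ remain free throughout.

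The one point I expect to need care is this reuse of variables. I have to check that swapping $x\leftrightarrow y$ commutes with the construction, and that anchor variables never get quantified. The latter holds because only $x$ and $y$ are ever bound. Finiteness of each conjunction comes from the finiteness of $\mathcal T_{q,r,n}$. Types not realised by any order-$n$ graph cause no trouble: their formulas are simply never satisfied, or are satisfied only in accordance with the recursion. A repeated entry in $\bar a$ is handled by the corresponding repeated equality atoms.
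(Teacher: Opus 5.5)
Your proposal is correct and follows essentially the same route as the paper's proof. Both argue by induction on $r$: a quantifier-free base case of unary and equality literals, then an inductive step conjoining $\theta^r_{t_0}(\bar x,x)$ with exact-count clauses $\exists^{=\mu(s)}y\,(E(x,y)\wedge\theta^r_s(\bar x,y))$ over all of $\mathcal T_{q,r,n}$, with $x$ and $y$ alternating under the scoping convention so that only $q+2$ names are used.
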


\begin{proof}
At round zero, use the conjunction of the required unary input-colour
literals and the required equalities or inequalities \(x=a_i\).  Suppose
that formulas have been constructed at round \(r\).  For

\[
  t=(t_0,\mu)\in\mathcal T_{q,r+1,n}
\]

put

\begin{equation}
  \theta^{r+1}_{t}(\bar x,x)
  :=
  \theta^{r}_{t_0}(\bar x,x)
  \wedge
  \bigwedge_{s\in\mathcal T_{q,r,n}}
  \exists^{=\mu(s)}y
  \bigl(E(x,y)\wedge\theta^{r}_{s}(\bar x,y)\bigr).
  \label{eq:type-formula-recursion}
\end{equation}

Here \(\exists^{=d}y\) abbreviates the conjunction of the usual at-least
\(d\) and not-at-least \(d+1\) counting quantifiers.  In the occurrence
\(\theta^{r}_{s}(\bar x,y)\), the name \(x\) may be rebound as its local
counting variable.  This binding is confined to that subformula.  It does
not bind the occurrence of \(x\) in \(E(x,y)\) or in any sibling of that
subformula.  Alternating the names \(x\) and \(y\) at successive recursive
levels therefore proves the stated \(q+2\) variable bound.
The conjunction in \eqref{eq:type-formula-recursion} ranges over the entire
universal lower-round type set.  It therefore includes a condition with
prescribed count zero for every lower-round type whose coordinate in
\(\mu\) is zero.  In particular, a target vertex with additional neighbours
of target-only lower-round types cannot satisfy the formula for the source
type.
\end{proof}

We next choose a finite common round at which the canonical types may be
treated as stable colours on both graphs. Give the disjoint union of the
source and target graphs the round-zero colouring that uses the equality
vector relative to \(\bar a\) on the source and relative to \(\bar a'\) on
the target. The recursive type in
\eqref{eq:canonical-type-recursion} is exactly the colour obtained by running
colour refinement on this disjoint union with universal colour names.
For \(K\in\{G,H\}\), put \(\bar a_G=\bar a\) and
\(\bar a_H=\bar a'\), and define the paired disjoint-union type by
\[
  \tau_r(x)=\operatorname{tp}^{K}_{r}(\bar a_K,x)
  \qquad\text{for }x\in V(K).
\]

\begin{lemma}[Common-round stabilization]
\label{lem:common-round-stabilization}
For two graphs of order \(n\), the partition of their disjoint union by the
types \(\tau_{2n}\) is stable. Its restriction to either
graph is the stable colour-refinement partition obtained after
individualizing the corresponding anchor occurrence.
\end{lemma}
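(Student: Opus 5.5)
The argument is the standard one: the number of classes can grow only a bounded number of times. The disjoint union \(G\mathbin{\dot\cup}H\) has \(2n\) vertices. It carries the round-zero colouring \(\tau_0\), which combines the unary input colours with the equality vector relative to \(\bar a_K\) on each side.

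By \eqref{eq:canonical-type-recursion}, \(\tau_{r+1}(x)\) contains \(\tau_r(x)\) as its first coordinate. Hence the partition \(\Pi_r\) induced by \(\tau_r\) is refined by \(\Pi_{r+1}\). We first note that \(\tau_{r+1}\) is exactly one colour-refinement round applied to \(\tau_r\) on the disjoint union. Neighbourhoods in \(G\mathbin{\dot\cup}H\) stay inside one graph, and the type names are universal, so the counts \(|N(v)\cap\tau_r^{-1}(s)|\) are the same whether computed in \(K\) or in the union.

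Next we show that once \(\Pi_{r+1}=\Pi_r\), every later partition equals \(\Pi_r\). The round-\((r+2)\) type of \(v\) is determined by \(\tau_{r+1}(v)\) and the neighbour counts over \(\tau_{r+1}\)-classes. Under the class bijection \(\Pi_{r+1}=\Pi_r\), these counts are the counts over \(\tau_r\)-classes, which \(\tau_{r+1}(v)\) already records. So two vertices with equal \(\tau_{r+1}\) have equal \(\tau_{r+2}\), and induction continues this. Since a strict refinement increases the number of classes and there are at most \(2n\) classes, at most \(2n-1\) strict steps occur. Therefore \(\Pi_{2n-1}=\Pi_{2n}=\Pi_{2n+1}\). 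The last equality says exactly that \(\Pi_{2n}\) is equitable on the union: vertices in the same class have equal neighbour counts into every class. This gives stability.

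For the restriction claim, the restriction of \(\Pi_r\) to \(V(K)\) is the partition produced by \(r\) rounds of colour refinement on \(K\) alone, starting from \(\chi^{\bar a_K}_K\). The reason is that the types on \(K\) depend only on \(K\) and \(\bar a_K\). The restriction of \(\Pi_{2n}\) is equitable in \(K\), since neighbour counts into a union class equal the counts into its \(K\)-part. It also refines the individualized initial colouring, so it refines \(\operatorname{Stab}_K(\chi^{\bar a_K})\) by the coarsest-equitable-refinement fact used in Lemma~\ref{lem:successive-stabilization}. Conversely, each refinement round preserves the property of being refined by \(\operatorname{Stab}_K(\chi^{\bar a_K})\), by induction on \(r\). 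So the restriction is coarser than, hence equal to, the stable partition.

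The only delicate point is that restricting union-stability to one graph could in principle be coarser than that graph's own stabilization. The argument above avoids this by proving both containments directly on \(K\), rather than relying on the class count of the union alone.
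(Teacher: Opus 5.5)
Your proof is correct and follows essentially the paper's route: counting classes on the $2n$-vertex disjoint union forces stability by round $2n$, and locality of the type recursion identifies the restriction with colour refinement on $K$ alone. Your two-sided comparison with $\operatorname{Stab}_K(\chi^{\bar a_K})$ makes explicit what the paper leaves implicit, namely that the round-$2n$ partition on $K$ is already the stable one. One slip in your closing remark: an equitable refinement of the individualized initial colouring can never be coarser than $\operatorname{Stab}_K(\chi^{\bar a_K})$, so the real a priori risk is that the restriction is strictly finer, and your converse containment is what rules this out.
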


\begin{proof}
The disjoint union has \(2n\) vertices.  Every strict colour-refinement round
increases the number of colour classes.  Consequently fewer than \(2n\)
strict rounds are possible.  The partition at round \(2n\) is therefore
stable. The type recursion of a vertex only uses neighbours in its own
graph. Its restriction to either graph is consequently the ordinary
colour-refinement recursion on that graph.  This proves the second claim.
\end{proof}

For $x\in V(K)$, where $K\in\{G,H\}$, write
\[
  \Lambda(x)=\lambda^K_{\bar a_K}(x)
\]
for its complete canonical stable type under the corresponding anchor
occurrence.

\begin{lemma}[Finite-to-complete type bridge]
\label{lem:finite-complete-type-bridge}
For all vertices $x,z\in V(G)\mathbin{\dot\cup}V(H)$,
\begin{equation}
  \tau_{2n}(x)=\tau_{2n}(z)
  \quad\Longleftrightarrow\quad
  \Lambda(x)=\Lambda(z).
  \label{eq:finite-complete-type-bridge}
\end{equation}
Consequently every realised round-$2n$ type has one complete continuation,
common to the source and target graphs wherever that finite type occurs. A
complete target type is source-realised exactly when its round-$2n$ prefix
is source-realised.
\end{lemma}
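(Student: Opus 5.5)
The plan is to compare the finite paired type $\tau_{2n}$ with the complete common-round code sequence $\Lambda=(c_i)_{i\geq0}$ on the disjoint union, using two facts: the recursion \eqref{eq:canonical-type-recursion} is literally the common-round recursion with universal names, and Lemma~\ref{lem:common-round-stabilization} says the $\tau_{2n}$-partition of $V(G)\mathbin{\dot\cup}V(H)$ is stable. First I would check that $\tau_r(x)$ and the round-$r$ code $c_r(x)$ of $\Lambda(x)$ determine each other by one graph-independent bijection. At round zero both record the unary colours and the anchor equality vector. At later rounds the only difference is that $\tau_{r+1}$ lists counts over the whole finite universe $\mathcal T_{q,r,n}$, including zeros, whereas $c_{r+1}$ records the multiset of neighbour codes; these carry the same information. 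With this identification, $\Lambda(x)=\Lambda(z)$ implies $\tau_{2n}(x)=\tau_{2n}(z)$ at once, since $\tau_{2n}$ is a prefix of $\Lambda$.

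For the converse I would prove by induction on $r\geq 2n$ that equality of $\tau_r$ implies equality of $\tau_{r+1}$ everywhere on the disjoint union. Stability at round $2n$ says the round-$(2n+1)$ partition equals the round-$2n$ partition. The round-$(r+1)$ type of $x$ is $\tau_r(x)$ together with the counts of neighbours of $x$ in the $\tau_r$-classes. If the round-$r$ partition equals the round-$(r-1)$ partition, each round-$r$ class is exactly one round-$(r-1)$ class, and its universal name is a fixed function of the earlier name, determined by any one realiser. So the round-$r$ counts are a relabelling of the round-$(r-1)$ counts, and the partition cannot split again. Therefore equal $\tau_{2n}$ gives equal $\tau_r$ for all $r$, and hence equal $\Lambda$. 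The point that needs care is that the relabelling from round-$(r-1)$ names to round-$r$ names is the same on $G$ and on $H$. This holds because we refine the disjoint union jointly: a round-$(r-1)$ class with realisers in both graphs has a single round-$r$ name, so the induction never compares names produced separately in the two graphs.

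The remaining claims follow. A realised round-$2n$ type $t$ has the single continuation $\Lambda(x)$ for any realiser $x$, and this is independent of the graph in which $x$ lies by \eqref{eq:finite-complete-type-bridge}. For the last sentence: if a complete target type $\Lambda(x')$ has a source-realised prefix $\tau_{2n}(x')=\tau_{2n}(x)$ with $x\in V(G)$, then $\Lambda(x')=\Lambda(x)$ is source-realised; the converse is immediate by taking prefixes.

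The main obstacle is the bookkeeping in the first step. Universal type names retain unrealised types and zero counts, so I must confirm that the zero entries over $\mathcal T_{q,r,n}$ add no distinctions beyond the multiset code; otherwise the identification of $\tau$ with the prefix of $\Lambda$ could fail. I would handle this by observing that the zero entries are identical for every vertex, so they carry no information.
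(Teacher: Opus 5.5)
Your proposal is correct and takes essentially the same route as the paper. Prefixes give $\Lambda(x)=\Lambda(z)\Rightarrow\tau_{2n}(x)=\tau_{2n}(z)$, stability of the round-$2n$ partition from Lemma~\ref{lem:common-round-stabilization} is propagated to every later round to get the converse, and the continuation and source-realisation claims are then read off as in the paper; your induction just spells out the paper's ``repeating this argument''. One small correction to your bookkeeping remark: zero counts at realised lower-round types do vary between vertices, and only the entries at unrealised types are zero for every vertex, so the right justification is that a count function on the fixed universe $\mathcal T_{q,r,n}$ and the corresponding neighbour multiset determine each other by one graph-independent bijection.
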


\begin{proof}
Equality of complete types implies equality of every finite prefix. For the
converse, suppose that $\tau_{2n}(x)=\tau_{2n}(z)$. By
Lemma~\ref{lem:common-round-stabilization}, the round-$2n$ partition of the
disjoint union is stable. Hence vertices in the same round-$2n$ class have
the same neighbour count into every such class and receive the same
round-$(2n+1)$ type. Repeating this argument gives
$\tau_r(x)=\tau_r(z)$ for every $r\geq2n$. The type at round $2n$ already
contains all earlier prefixes, so the complete code sequences agree. This
proves \eqref{eq:finite-complete-type-bridge}.

The equivalence defines a unique continuation for each realised finite
type. If a complete target type is realised on the source, its finite prefix
is also source-realised. Conversely, a common finite prefix gives a common
complete continuation by the equivalence.
\end{proof}

We write

\begin{equation}
  \lambda_{\bar a}^{G,[2n]}(v)
  :=\operatorname{tp}^{G}_{2n}(\bar a,v)
  \label{eq:common-stable-palette}
\end{equation}

when the two graphs under consideration have order \(n\), and use the
analogous notation \(\lambda_{\bar a'}^{H,[2n]}\) on the target graph. These
are finite representatives of the semantic stable palettes defined in
Section~\ref{subsec:source-preliminaries}. By
Lemma~\ref{lem:finite-complete-type-bridge}, each finite palette induces
exactly the same classes as its semantic stable palette, and its finite
names have unique complete-code continuations. The round \(2n\) notation is only a finite naming
convention.  It does not impose a uniform bound on the number of
Weisfeiler-Leman rounds as the order of the source graph varies.

We now make the passage from complete stable types to a source-fixed finite
table explicit. Let \(\mathbf U\) be an ordered fine anchor occurrence and
let \(\mathbf Q\) be obtained from \(\mathbf U\) by a fixed source occurrence
map. The map may retain, repeat, or reorder coordinates. Write
\(q=|\mathbf U|\) and \(p=|\mathbf Q|\).
For every round \(r\), define a finite-round forgetful map

\begin{equation}
  \pi_r\colon
  \mathcal T_{q,r,n}\longrightarrow\mathcal T_{p,r,n}.
  \label{eq:finite-round-forgetful-map}
\end{equation}

At round zero, \(\pi_0\) retains, repeats, and reorders the equality
coordinates prescribed by the occurrence map and leaves the unary input
colour data unchanged.  Suppose that \(\pi_r\) has been defined.  For a
fine type \(s=(s_0,\mu)\), put

\begin{equation}
  \pi_{r+1}(s)
  =\bigl(\pi_r(s_0),\nu\bigr),
  \qquad
  \nu(c)=
  \sum_{d\in\pi_r^{-1}(c)}\mu(d).
  \label{eq:finite-round-forgetful-recursion}
\end{equation}

The sum simply aggregates fine neighbour counts over one coarse fibre.
Induction on \(r\) gives, for every order-\(n\) graph \(K\), the
corresponding occurrences \(\mathbf U_K,\mathbf Q_K\), and every vertex \(v\),

\begin{equation}
  \operatorname{tp}^{K}_{r}(\mathbf Q_K,v)
  =\pi_r\bigl(\operatorname{tp}^{K}_{r}(\mathbf U_K,v)\bigr).
  \label{eq:finite-round-forgetful-correctness}
\end{equation}

If no coarser occurrence is involved, we take
\(\mathbf Q=\mathbf U\) and define every
\(\pi_r\) to be the identity.  This convention covers the unlifted case
rather than leaving it implicit.

Fix the source graph \(G\) and its occurrence \(\mathbf Q_G\). Let
\(\mathcal R_{G,\mathbf Q}\) be the set of types in
\(\mathcal T_{p,2n,n}\) realised on the source.  For
\(c\in\mathcal R_{G,\mathbf Q}\), let

\[
  \kappa_{G,\mathbf Q}(c)
\]

be the complete canonical stable type of any source vertex whose
round-\(2n\) type is \(c\).  This is well defined by
Lemma~\ref{lem:finite-complete-type-bridge}. Let \(f\) be the symmetric flip
specified on the complete stable \(\mathbf Q_G\)-types realised on the source, and
let \(f^0\) be its semantic zero extension.  Thus \(f^0(\xi,\eta)=0\)
whenever at least one of \(\xi,\eta\) is not realised on the source.

Define the total source-fixed finite table on fine round-\(2n\) types by

\begin{equation}
  \widetilde f^{\,\mathbf U,\mathbf Q}_{G,n}(s,t)
  =
  \begin{cases}
    f\bigl(\kappa_{G,\mathbf Q}(\pi_{2n}(s)),
           \kappa_{G,\mathbf Q}(\pi_{2n}(t))\bigr),
      &\text{if }\pi_{2n}(s),\pi_{2n}(t)\in\mathcal R_{G,\mathbf Q},\\
    0,&\text{otherwise}.
  \end{cases}
  \label{eq:source-fixed-finite-flip-table}
\end{equation}

This definition never asks for a complete continuation of a finite type
that is not realised on the source.  It is therefore fixed by \(G\), \(n\),
the two source occurrences, and the source flip.  The table is total and
symmetric.

To verify the table, let \(K\) be the source or target graph and let
\(u,v\in V(K)\). By \eqref{eq:finite-round-forgetful-correctness}, the
projection of a fine round-\(2n\) type is the round-\(2n\) type relative to
\(\mathbf Q_K\). If this projection is source-realised,
Lemma~\ref{lem:finite-complete-type-bridge} says that its complete type on
\(K\) is \(\kappa_{G,\mathbf Q}\) of that finite type. If the projection is not
source-realised, the same lemma says that its complete type on \(K\) is not
source-realised. The semantic zero extension therefore assigns zero to
every pair involving it. Consequently

\begin{equation}
\begin{split}
  &\widetilde f^{\,\mathbf U,\mathbf Q}_{G,n}
    \bigl(\operatorname{tp}^{K}_{2n}(\mathbf U_K,u),
          \operatorname{tp}^{K}_{2n}(\mathbf U_K,v)\bigr)\\
  &\qquad={}
    f^0\bigl(\lambda^K_{\mathbf Q_K}(u),
             \lambda^K_{\mathbf Q_K}(v)\bigr).
\end{split}
  \label{eq:finite-flip-table-correctness}
\end{equation}

Thus one source-fixed finite table implements the totalized semantic flip
on every order-\(n\) target. A fine type realised only on the target still
inherits the source flip value when its coarse projection is
source-realised because the bridge identifies its complete coarse factor
with the source continuation. Only a source-unrealised coarse projection
receives the default zero value. In the identity case, a target-only fine type receives
zero exactly as required.  Henceforth write \(\widehat f\) for the table in
\eqref{eq:source-fixed-finite-flip-table}.

\subsection{Finite flip tables and their components}

\begin{lemma}[Definability of a lifted flipped edge]
\label{lem:flip-edge-definability}
The edge relation of the graph obtained by the lifted flip is definable by a
formula

\[
  E_{\widehat f}(\bar x,x,y)
\]

using only the \(q\) anchor variables and the two endpoint variable names
\(x,y\).
\end{lemma}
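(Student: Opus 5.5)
The formula is written down directly from the finite table $\widehat f$ in \eqref{eq:source-fixed-finite-flip-table} together with the type formulas of Lemma~\ref{lem:finite-type-formulas}. Only finitely many fine round-$2n$ types exist, namely the set $\mathcal T_{q,2n,n}$. The table $\widehat f$ is total and symmetric on this set, so let $\Phi=\{(s,t):\widehat f(s,t)=1\}$. I would define
\[
  E_{\widehat f}(\bar x,x,y)
  :=\neg(x=y)\wedge\Bigl(E(x,y)\leftrightarrow\neg\bigvee_{(s,t)\in\Phi}\bigl(\theta^{2n}_{s}(\bar x,x)\wedge\theta^{2n}_{t}(\bar x,y)\bigr)\Bigr).
\]
This is a finite Boolean combination, so it is a legitimate counting-logic formula. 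Its correctness on every order-$n$ graph $K$ with occurrence $\mathbf U_K$ follows in two steps. First, Lemma~\ref{lem:finite-type-formulas} gives $\theta^{2n}_s(\mathbf U_K,u)$ exactly when $\operatorname{tp}^K_{2n}(\mathbf U_K,u)=s$, so the disjunction holds exactly when $\widehat f$ evaluated at the two round-$2n$ types equals one. Second, by \eqref{eq:finite-flip-table-correctness} that value equals $f^0(\lambda^K_{\mathbf Q_K}(u),\lambda^K_{\mathbf Q_K}(v))$. The formula therefore expresses $\mathbf 1_{uv\in E(K)}\oplus f^0(\cdots)$ for distinct $u,v$, which is the lifted flipped edge relation.

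The variable count is the step that needs care. The outer formula has free variables $\bar x,x,y$. Inside the conjunct $\theta^{2n}_s(\bar x,x)$, the recursion \eqref{eq:type-formula-recursion} rebinds $y$ (and, at alternate levels, $x$) as local counting variables. By the lexical convention of the preliminaries, such a rebinding is confined to its subformula and does not capture the free $y$ in the sibling conjunct $\theta^{2n}_t(\bar x,y)$ or in $E(x,y)$. For the conjunct $\theta^{2n}_t(\bar x,y)$, I would obtain it from $\theta^{2n}_t(\bar x,x)$ by swapping the names $x$ and $y$ throughout. This is again a formula in the same $q+2$ names. Hence every subformula uses only $\bar x,x,y$, and the whole formula uses $q$ anchor variables plus the two endpoint names.

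The only potential obstacle is the side condition on types realised only on the target. This is already absorbed into \eqref{eq:finite-flip-table-correctness}: the conjunction in each $\theta$ ranges over the whole universal type set, so a target vertex satisfies exactly one $\theta^{2n}_s$, namely the one for its own type. Zero-extended pairs then contribute nothing to the disjunction, so no further case analysis is needed.
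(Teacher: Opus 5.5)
Your proposal is correct and follows the paper's proof: the paper takes the same finite disjunction $F_{\widehat f}(\bar x,x,y)$ of the pairs $\theta^{2n}_s(\bar x,x)\wedge\theta^{2n}_t(\bar x,y)$ over all $(s,t)$ with $\widehat f(s,t)=1$, combines it with $E(x,y)$ by exclusive or under $x\neq y$ (your biconditional with $\neg F_{\widehat f}$ is the same connective), and bounds the variables by the same lexical-rebinding argument for the two sibling conjuncts. Your explicit semantic check through Lemma~\ref{lem:finite-type-formulas} and \eqref{eq:finite-flip-table-correctness} only spells out what the paper leaves to the paragraph preceding the lemma.
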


\begin{proof}
Define

\begin{equation}
  F_{\widehat f}(\bar x,x,y)
  :=
  \bigvee_{\widehat f(s,t)=1}
  \left(
    \theta^{2n}_{s}(\bar x,x)
    \wedge
    \theta^{2n}_{t}(\bar x,y)
  \right)
  \label{eq:flip-predicate}
\end{equation}

and put

\begin{equation}
\begin{split}
  E_{\widehat f}(\bar x,x,y)
  := {}& x\neq y\ \wedge \\
  &\bigl(
    (E(x,y)\wedge\neg F_{\widehat f}(\bar x,x,y))
    \vee
    (\neg E(x,y)\wedge F_{\widehat f}(\bar x,x,y))
  \bigr).
\end{split}
  \label{eq:flipped-edge}
\end{equation}

Inside \(\theta^{2n}_{s}(\bar x,x)\), the endpoint name \(y\) can be
rebound as the local counting variable.  Inside the sibling formula
\(\theta^{2n}_{t}(\bar x,y)\), the name \(x\) can be rebound.  These local
bindings do not affect the free endpoint occurrences in the other siblings
of \eqref{eq:flip-predicate} or
\eqref{eq:flipped-edge}.  No third nonanchor variable name is needed.
\end{proof}

For component membership we use a capture-safe three-variable recursion.
Let \(u_0,u_1,u_2\) be three variable names distinct from the \(q\) anchor
names.  For distinct \(i,j\in\{0,1,2\}\), let

\begin{equation}
  R^{ij}_0(\bar x,u_i,u_j)
  :=
  u_i=u_j\vee E_{\widehat f}(\bar x,u_i,u_j).
  \label{eq:path-base}
\end{equation}

If \(\ell\) is the remaining member of \(\{0,1,2\}\), define

\begin{equation}
  R^{ij}_{r+1}(\bar x,u_i,u_j)
  :=
  \exists u_\ell
  \left(
    R^{i\ell}_{r}(\bar x,u_i,u_\ell)
    \wedge
    R^{\ell j}_{r}(\bar x,u_\ell,u_j)
  \right).
  \label{eq:path-recursion}
\end{equation}

The first child of \eqref{eq:path-recursion} may bind \(u_j\) as its
local scratch name.  The second child may bind \(u_i\).  Each such binding is
confined to its own child.  Thus the two endpoints of the parent formula
remain free.

Put
\[
  h_n=\left\lceil\log_2\max\{1,n-1\}\right\rceil.
\]

\begin{lemma}[Finite component predicate]
\label{lem:component-predicate}
For vertices \(z,v\) in either graph,

\begin{equation}
  \operatorname{Comp}_{\bar a,\widehat f}(z,v)
  :=R^{01}_{h_n}(\bar a,z,v)
  \label{eq:component-formula}
\end{equation}

holds exactly when \(z\) and \(v\) belong to the same connected component
of the lifted flipped graph.  This predicate uses at most \(q+3\) variable
names.
\end{lemma}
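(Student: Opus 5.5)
The argument has two parts: a semantic claim about walk lengths, proved by induction on $r$, and a syntactic count of variable names.

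For the semantics, I would prove by induction on $r$ that for all distinct $i,j$ and all vertices $z,v$ of either graph, $R^{ij}_r(\bar a,z,v)$ holds exactly when $v$ is reachable from $z$ by a walk of length at most $2^r$ in the lifted flipped graph $K^{\widehat f}$. The base case $r=0$ follows from \eqref{eq:path-base} and Lemma~\ref{lem:flip-edge-definability}: $R^{ij}_0$ says $z=v$ or $zv$ is a flipped edge. For the inductive step, a walk of length at most $2^{r+1}$ has a vertex $w$ that splits it into two walks, each of length at most $2^r$; take $w$ at the midpoint, or at an endpoint if the walk is short. Conversely, two such walks concatenate to a walk of length at most $2^{r+1}$. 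So \eqref{eq:path-recursion} with $u_\ell$ interpreted as $w$ gives the claim.

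Connectivity then follows. A component of a graph of order $n$ has diameter at most $n-1$, and $2^{h_n}\geq n-1$ by the choice of $h_n$. Hence $R^{01}_{h_n}(\bar a,z,v)$ is equivalent to $z$ and $v$ lying in one component of the flipped graph. The case $n=1$ gives $h_n=0$, where only $z=v$ is possible, and this is consistent. Correctness of the flipped edge relation on both source and target graphs is already given by \eqref{eq:finite-flip-table-correctness} together with Lemma~\ref{lem:flip-edge-definability}.

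For the variable count, I would show by induction that every $R^{ij}_r$ uses only the $q$ anchor names together with $u_0,u_1,u_2$. The base formula invokes $E_{\widehat f}$ with endpoints $u_i,u_j$. By Lemma~\ref{lem:flip-edge-definability}, its internal counting quantifiers can rebind the endpoint names themselves, so after renaming its two endpoint names to $u_i,u_j$ it uses $q+2$ names. The recursion adds only $u_\ell$, which is already among the three.

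The main obstacle is capture. In the first conjunct $R^{i\ell}_r(\bar x,u_i,u_\ell)$, the nested quantifier binds the third name, which is $u_j$. This is harmless because that binding is confined to the conjunct, so the free occurrence of $u_j$ in the sibling $R^{\ell j}_r$ is unaffected; this is exactly the lexical convention fixed in Section~\ref{sec:preliminaries}. I would state the inductive invariant as: \emph{the free variables of $R^{ij}_r$ are exactly $\bar x,u_i,u_j$, and all bound names lie in $\{u_0,u_1,u_2\}$ or are rebindings internal to $\theta$-subformulas}. With that invariant both the semantic induction and the $q+3$ bound go through.
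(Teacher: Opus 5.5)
Your proposal is correct and follows essentially the same route as the paper. You prove by induction on $r$ that $R^{ij}_r$ expresses walks of length at most $2^r$ in the lifted flipped graph, and you use $2^{h_n}\geq n-1$ to cover every simple path inside a component. The scoping convention then confines each rebinding to its own conjunct, so only $u_0,u_1,u_2$ are added to the $q$ anchor names; you simply make explicit the midpoint split, the $n=1$ case, and the scoping invariant that the paper leaves implicit.
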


\begin{proof}
An induction on \(r\) shows that \(R^{ij}_r\) expresses the existence of a
walk of length at most \(2^r\) between its endpoints in the flipped graph.
Two vertices in the same component of an \(n\)-vertex graph are joined by a
simple path of length at most \(n-1\).  Equations
\eqref{eq:path-base} and
\eqref{eq:path-recursion} therefore prove the semantic claim.  The
capture convention following \eqref{eq:path-recursion} proves that only
the three nonanchor names \(u_0,u_1,u_2\) occur.
\end{proof}

\subsection{Enforcing a finite key}

We use the precise pointed form of
Proposition~\ref{prop:gn-wl-game-logic}.  Let \(I\subseteq[m]\) index the
occupied pebble pairs in a position
\((p_i,p'_i)_{i\in I}\).  Then Duplicator wins the bijective
\(m\)-pebble game from this labelled position if and only if

\begin{equation}
  G\models\varphi((p_i)_{i\in I})
  \quad\Longleftrightarrow\quad
  H\models\varphi((p'_i)_{i\in I})
  \label{eq:pointed-logic-game-equivalence}
\end{equation}

for every \(\varphi\in\mathsf C^m\) whose free variable names are among the
labels indexed by \(I\).  This is the standard induction on formulas with
free variables.  The free assignments remain fixed, while a counting
quantifier is simulated with the pebble carrying its bound variable name.
No constants are added and no extra variable name is introduced.

We also record the equality-stratum substitution used below.  Let
\(\varphi\) be a formula in \(\mathsf C^m\) with displayed free occurrences
\(z_1,\ldots,z_k\).  Let \(\sim\) be an equality relation on these
occurrences.  Choose one representative \(i_C\) for each
\(\sim\)-class \(C\).  There is a formula

\[
  \varphi_{\sim}\bigl((z_{i_C})_C\bigr)\in\mathsf C^m
\]

such that every assignment with equality pattern \(\sim\) satisfies

\begin{equation}
  K\models\varphi(a_1,\ldots,a_k)
  \quad\Longleftrightarrow\quad
  K\models\varphi_{\sim}\bigl((a_{i_C})_C\bigr).
  \label{eq:equality-stratum-substitution}
\end{equation}

For each nonrepresentative free name \(z_i\), let \(z_j\) be the chosen
representative of its class and replace the current formula \(\psi\) by
\[
  \exists z_i\,(z_i=z_j\wedge\psi).
\]
On the equality stratum this eliminates \(z_i\) as a free name without any
substitution or freshness requirement. Iterating over the nonrepresentatives
constructs \(\varphi_{\sim}\), introduces no new variable name, and proves
\eqref{eq:equality-stratum-substitution}. Hence repeated or coincident free
roles require only one name per distinct assigned vertex.

Consider a current position

\[
  (\bar p,\bar p')
\]

with at most \(m-1\) occupied pebble pairs.  A finite key is a tuple

\begin{equation}
  \mathcal K(\bar p,u)
  =
  \bigl(K_1(\bar p,u),\ldots,K_s(\bar p,u)\bigr),
  \label{eq:finite-key}
\end{equation}

where every fibre \(K_i=d\) is defined, on both graphs, by the same
source-fixed formula in \(\mathsf C^m\).  A coordinate may use only a fixed
subtuple of \(\bar p\).  Different coordinates may reuse the same bound
variable names because they occur in different Boolean siblings.

Suppose Duplicator announces a bijection
\[
  \sigma\colon V(G)\longrightarrow V(H)
\]
from the position $(\bar p,\bar p')$.

\begin{lemma}[Finite key enforcement]
\label{lem:finite-key-enforcement}
Exactly one of the following two
possibilities applies.

\begin{enumerate}
  \item There are \(u\in V(G)\) and a coordinate \(K_i\) such that
  
  \[
    K_i^G(\bar p,u)
    \neq
    K_i^H(\bar p',\sigma(u)).
  \]
  
  Spoiler places the available pebble on \(u\).  Spoiler then wins from the
  resulting position in the bijective \(m\)-pebble game.

  \item For every \(u\in V(G)\),
  
  \[
    \mathcal K^G(\bar p,u)
    =
    \mathcal K^H(\bar p',\sigma(u)).
  \]
  
  Hence \(\sigma\) preserves every key fibre, every Boolean combination of
  key fibres, and every joint key multiplicity.
\end{enumerate}
\end{lemma}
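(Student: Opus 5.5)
The dichotomy is a tautology: either $\sigma$ preserves the whole key pointwise or some coordinate fails at some $u$. The real content is in the two consequences. In case 2, pointwise preservation of $\mathcal K$ by the bijection $\sigma$ means $\sigma$ maps each fibre $\{u:\mathcal K^G(\bar p,u)=\kappa\}$ onto the fibre $\{u':\mathcal K^H(\bar p',u')=\kappa\}$. This gives preservation of every Boolean combination of key fibres and equality of every joint multiplicity $\#\{u:\mathcal K(\bar p,u)=\kappa\}$. So case 2 is immediate from bijectivity. The work is in case 1: showing that Spoiler wins $\BP_m(G,H)$ after placing the free pebble on $u$.

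For case 1, fix $u$ and $i$ with $K_i^G(\bar p,u)=d\neq K_i^H(\bar p',\sigma(u))$. Let $\psi_d(\bar z,w)\in\Clog^m$ be the source-fixed formula defining the fibre $K_i=d$. Its free names are those of the fixed subtuple of $\bar p$ used by $K_i$, together with one name $w$ for the new vertex. Since at most $m-1$ pairs are occupied, a free pebble pair exists, and Spoiler requests it. Spoiler places it on $u$, and the rules force the target pebble onto $\sigma(u)$. In the new labelled position we have
\[
G\models\psi_d(\bar p,u)
\quad\text{but}\quad
H\not\models\psi_d(\bar p',\sigma(u)).
\]
If the new placement already breaks the partial isomorphism, Spoiler has an atomic win. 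Otherwise, the pointed equivalence \eqref{eq:pointed-logic-game-equivalence} says Duplicator cannot win from this position, because a $\Clog^m$ formula with free names among the occupied labels separates the two sides. Hence Spoiler wins.

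The main obstacle is variable bookkeeping. The free names of $\psi_d$ must be exactly the pebble labels of the occupied pairs and the new pebble, with at most $m$ names in total. Two situations need handling. First, the key coordinate may refer to anchor roles that coincide as vertices, or the new vertex $u$ may equal an already pebbled vertex. In both cases I would apply the equality-stratum substitution \eqref{eq:equality-stratum-substitution}, so that only one name per distinct assigned vertex is free. The source and target equality patterns agree, since otherwise the position is not a partial isomorphism and Spoiler has already won. Second, the bound names inside $\psi_d$ may include labels of occupied pebbles not used by $K_i$. This is harmless, because those pebbles are not free in $\psi_d$ and the lexical rebinding convention lets such names be reused as scratch variables. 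I would also check that placing the new pebble does not require removing a pebble that $K_i$ uses. This holds because the position has at most $m-1$ pairs and $K_i$ uses only a fixed subtuple of $\bar p$.

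Finally, the two cases are mutually exclusive by definition, which gives ``exactly one''.
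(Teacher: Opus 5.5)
Your proposal is correct and follows essentially the same route as the paper. The dichotomy is tautological, and case 2 follows from bijectivity. In case 1 Spoiler pebbles \(u\), and the defining \(\mathsf C^m\) formula for the fibre \(K_i=d\) separates the new pointed assignments. A mismatch of equality patterns is an atomic win; otherwise equality-stratum substitution and the pointed logic--game equivalence give the win.

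The only difference is cosmetic. The paper explicitly removes pairs outside the formula's free support after \(u\) has been pebbled. You instead let the standard simulation reuse those occupied labels as scratch variables, which the pointed equivalence already permits because it only requires the free names to lie among the occupied labels.
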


\begin{proof}
If the second alternative fails, choose \(u\) and \(K_i\) as in the first
alternative and let \(d=K_i^G(\bar p,u)\).  After Spoiler pebbles \(u\), the
source assignment satisfies the \(\mathsf C^m\) formula defining \(K_i=d\),
while the target assignment with response \(\sigma(u)\) does not.  If the
two assignments have different equality patterns, the new position already
fails to be a partial isomorphism.  If their equality patterns agree, apply
\eqref{eq:equality-stratum-substitution} to the defining formula.  The
resulting \(\mathsf C^m\) formula still distinguishes the reduced pointed
assignments.  Equation~\eqref{eq:pointed-logic-game-equivalence} then gives
Spoiler a winning strategy from the new position.

Only after \(u\) has been pebbled and its announced mate \(\sigma(u)\) has
been received does Spoiler remove obsolete pairs. If the defining formula
for \(K_i=d\) uses only a subtuple of the current anchors, every pair whose
label is absent from the free support of that formula may now be removed.
These removals do not change either truth value. The pair representing the
free response variable is retained unless equality-stratum substitution has
identified it with another retained pair. The number of occupied pairs never
increases during these removals. Bound variables of the distinguishing
formula are then realised by reusing the remaining \(m\) pebble pairs in the
standard game simulation.

If no violating \(u\) exists, the displayed equality in the second
alternative holds pointwise.  Its remaining conclusions follow because
\(\sigma\) is a bijection.
\end{proof}

The lemma applies equally to multivalued stable-type coordinates and lifted
flip component bits. For a stable-type mismatch, use the formula for the
source type. For a component bit, use
Lemma~\ref{lem:component-predicate}. Thus an assertion that an announced
bijection preserves a transition key is a valid dichotomy in the ordinary
graph vocabulary.  A failure gives a bounded-pebble win, while preservation
gives an actual pointwise key-preserving bijection.

\subsection{Application to the parameter bound}

Fix the source decomposition $\mathcal D$ and set
\[
  m=m(\mathcal D)
  =\max\{\ell(\mathcal D)+2,\,2w(\mathcal D)+3\}.
\]
At a transition node $t$, the shared master occurrence has at most
$\ell_{\mathcal D}(t)$ distinct anchors. A stable master type therefore
uses at most
\begin{equation}
  \ell_{\mathcal D}(t)+2
  \leq\ell(\mathcal D)+2
  \leq m
  \label{eq:master-profile-support-bound}
\end{equation}
variable names by Lemma~\ref{lem:finite-type-formulas}.

An ordered split-pair occurrence associated with a node $s$ has
$2r_{\mathcal D}(s)$ entries. A marked stable type or a component predicate
based on this occurrence uses at most
\begin{equation}
  2r_{\mathcal D}(s)+3
  \leq2w(\mathcal D)+3
  \leq m
  \label{eq:local-profile-support-bound}
\end{equation}
names. Lifted flip adjacency uses at most
$2r_{\mathcal D}(s)+2$ names.

At a transition key probe, the master anchors, the current marker, and the
response vertex occupy at most
$\ell_{\mathcal D}(t)+2\leq m$ roles. If the violated coordinate is the
master stable type, the current marker is absent from its support and its
name may be reused when it is distinct from the master anchors. When it
aliases a master anchor, the equality accounting below supplies the movable
name. If a violating occurrence or
component coordinate is selected, all master names outside its support are
obsolete and may be removed before the pointed game simulation. The fact
that a key contains many coordinates causes no increase in variable width.
Spoiler chooses one violated coordinate, and formulas belonging to different
coordinates are Boolean siblings with reusable local names.

Before this renaming is performed, fix the equality stratum of the current
assignment and apply \eqref{eq:equality-stratum-substitution}.  Repeated
anchor or marker occurrences are then represented by one shared variable.
If a selected response is already pebbled, a different target mate violates
equality immediately.  With the same mate, the duplicate pair may be
removed.  Thus aliases only decrease the number of distinct names and live
roles.

Combining Lemma~\ref{lem:finite-key-enforcement} with
\eqref{eq:master-profile-support-bound} and
\eqref{eq:local-profile-support-bound} closes the passage from the auxiliary
transition palettes to the bijective $m(\mathcal D)$-pebble game.

\section{The two-palette strict-child transition}
\label{sec:transition}

Fix a rooted source rank decomposition $\mathcal D=(\mathcal T,\gamma)$ and an
internal node $t$. Let its shore be $X$, let its two child shores be $X_1$
and $X_2$, and write
\[
  X=X_1\mathbin{\dot\cup}X_2.
\]
Let $\mathbf S$, $\mathbf F$, and $\mathbf T$ be ordered split-pair
occurrences for $X$, $X_1$, and $X_2$, respectively.  The child occurrences
are chosen as in Proposition~\ref{prop:gn-nice-split-pairs}.
Write
\[
  f_{\mathbf S}=f_{X,\mathbf S},
  \qquad
  f_{\mathbf F}=f_{X_1,\mathbf F},
  \qquad
  f_{\mathbf T}=f_{X_2,\mathbf T}
\]
for the three source-fixed flips from
Proposition~\ref{prop:gn-split-pair-flip}.

The distinction between occurrences and their values is important.  Choose
an ordering
\[
  \mathbf P=(p_1,\ldots,p_r)
\]
of the distinct vertices occurring in
$\mathbf S\cup\mathbf F\cup\mathbf T$.  Thus the entries of $\mathbf P$ are
pairwise distinct and
$r\leq\ell_{\mathcal D}(t)\leq\ell(\mathcal D)$ by
Corollary~\ref{cor:profile-carrier}. For
$\mathbf Q\in\{\mathbf S,\mathbf F,\mathbf T\}$, fix the unique source
occurrence map
\[
  \iota_{\mathbf Q}\colon [|\mathbf Q|]\longrightarrow[r]
\]
for which the $j$th entry of $\mathbf Q$ is
$p_{\iota_{\mathbf Q}(j)}$.  These maps retain every repetition and every
ordering in the three split-pair occurrences.

For brevity, $\mathbf P\setminus\mathbf Q$ denotes the ordered suboccurrence
of $\mathbf P$ obtained by deleting the values that occur in $\mathbf Q$.
Thus
\begin{equation}
  \operatorname{val}(\mathbf P\setminus\mathbf Q)
  =\operatorname{val}(\mathbf P)\setminus\operatorname{val}(\mathbf Q).
  \label{eq:carrier-occurrence-difference}
\end{equation}

Given a current marker pair $v,v'$, when carrier acquisition has been
completed, write the target mates of the carrier values as
\[
  \mathbf P'=(p'_1,\ldots,p'_r).
\]
Spoiler checks the full enlarged position $\mathbf P,v$ against
$\mathbf P',v'$, including all equality, input-colour, and adjacency facts
between the old marker and every acquired carrier value. If this position is
not a partial isomorphism, Spoiler has already won. We may therefore assume
that the entries of $\mathbf P'$ are pairwise distinct and define
\begin{equation}
  \mathbf Q'
  =\bigl(p'_{\iota_{\mathbf Q}(1)},\ldots,
          p'_{\iota_{\mathbf Q}(|\mathbf Q|)}\bigr)
  \qquad
  \text{for }\mathbf Q\in\{\mathbf S,\mathbf F,\mathbf T\}.
  \label{eq:target-occurrence-projection}
\end{equation}
No split pair or decomposition is chosen in the target graph.  The target
occurrences in \eqref{eq:target-occurrence-projection} are only projections
of the replies to the source carrier.

For an occurrence $\mathbf Q$ and a vertex $y$, abbreviate the component
indicator by
\begin{equation}
  J_y^{\mathbf Q}(u)
  =\mathbf 1_{u\in[\mathbf Q,y]^G_{f_{\mathbf Q}}}.
  \label{eq:transition-component-bit}
\end{equation}
On the target graph the same symbol with a prime means
\[
  (J_{y'}^{\mathbf Q'})'(u')
  =\mathbf 1_{u'\in[\mathbf Q',y']^H_{f_{\mathbf Q}}},
\]
where the source flip is totalized and evaluated on the target as specified
in Section~\ref{subsec:source-preliminaries}.

We use the remote synchronization results from the preceding section in the
following precise form.  Suppose paired components avoid corresponding
anchor sets and one preserving key contains a stable base palette, a stable
factor, and all component bits that define the two remote unions.  If the
factor is a function of the base palette in each graph, the joint
multiplicities in that one key make the two functions equal on every base
type realised in either remote union.  We also use that a component bit is a
common Boolean function of an individualized stable palette whenever that
palette and the bit occur jointly in one preserving key.  All common-function
statements below refer to one map on the canonical type alphabet, not to two
unrelated graph-local maps.

It will be convenient to write
\[
  \eta\preccurlyeq_{U,U'}\xi
\]
when $\eta,\eta'$ have a common codomain, $\xi,\xi'$ have a common
codomain, and there is one map $g$ between these two colour sets such that
$\eta=g\circ\xi$ on $U$ and
$\eta'=g\circ\xi'$ on $U'$.  A constant coordinate is also regarded as a
common function.

Assume that the current pebble position consists only of one pair for each
distinct value in $\mathbf S\cup\{v\}$, sends the source occurrences and
marker to their displayed target mates, and is a partial isomorphism. Suppose
also that this normalized position is represented by
\[
  (\mathbf S,v),(\mathbf S',v')
\]
with
\begin{equation}
  C=[\mathbf S,v]^G_{f_{\mathbf S}}\subseteq X,
  \qquad
  C'=[\mathbf S',v']^H_{f_{\mathbf S}},
  \label{eq:incoming-components}
\end{equation}
and
\begin{equation}
  \bigl(G[C],\lambda_{\mathbf S,v}|_C\bigr)
  \not\cong
  \bigl(H[C'],\lambda'_{\mathbf S',v'}|_{C'}\bigr).
  \label{eq:incoming-mismatch}
\end{equation}
We call these three requirements the normalized boundary conditions at $t$.
Relabel the children so that $v\in X_1$. Starting from this boundary,
Spoiler places a pebble on each carrier value not already represented by
$\mathbf S,v$. He does this one value at a time. The occurrence maps above
determine all target occurrences. After each reply, he checks the full
enlarged position, including its incidences with the old marker. At the
completed carrier position, at most $m(\mathcal D)-1$ pebble pairs are
occupied.

\begin{lemma}[Two-palette transition]
\label{lem:two-palette-transition}
Suppose that \eqref{eq:incoming-components} and
\eqref{eq:incoming-mismatch} satisfy the normalized boundary conditions above.
Spoiler acquires the carrier through the fixed source occurrence maps and
checks the full enlarged position after every reply. He either obtains a
position with a winning continuation during this acquisition or can continue
to reach a position
\[
  (\mathbf F,x),(\mathbf F',x')
  \quad\text{or}\quad
  (\mathbf T,x),(\mathbf T',x')
\]
that satisfies the normalized boundary conditions on a strict child shore.
Throughout the transition at most $m(\mathcal D)$ pebble pairs are needed.
\end{lemma}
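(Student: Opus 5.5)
We follow the architecture of \cite[Theorem~4.4]{GroheNeuen2023}, with the shared carrier replacing their three separately placed split pairs. Spoiler first acquires $\mathbf P\setminus(\mathbf S\cup\{v\})$ one value at a time. By Corollary~\ref{cor:profile-carrier} the completed position has at most $\ell(\mathcal D)+1\le m(\mathcal D)-1$ live pairs. Any failure of partial isomorphism during acquisition is an atomic win. Otherwise the projections \eqref{eq:target-occurrence-projection} define $\mathbf S',\mathbf F',\mathbf T'$.

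The first step is to transport the incoming mismatch \eqref{eq:incoming-mismatch} to the refined individualization by $\mathbf P,v$. By Lemma~\ref{lem:successive-stabilization}, $\lambda_{\mathbf P,v}$ refines $\lambda_{\mathbf S,v}$ through the graph-independent reduct of Lemma~\ref{lem:canonical-forgetful-reduct}. By Proposition~\ref{prop:gn-split-pair-flip}, $C\subseteq X=X_1\mathbin{\dot\cup}X_2$. The flips $f_{\mathbf F}$ and $f_{\mathbf T}$ split $C$ into pieces inside $X_1$ or $X_2$. Let $\mathcal P$ be the partition of $C$ into intersections with $f_{\mathbf F}$-components meeting $X_1$ and $f_{\mathbf T}$-components meeting $X_2$, and let $\mathcal Q$ be the corresponding partition of $C'$. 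The target version is read off through the common Boolean maps of Lemmas~\ref{lem:marked-common-component-palette} and~\ref{lem:common-component-palettes}.

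The second step is the first preannounced palette. Spoiler asks Duplicator for a bijection and, via Lemma~\ref{lem:finite-key-enforcement}, enforces the key
\[
\bigl(\lambda_{\mathbf P,v}(u),\,J^{\mathbf S}_v(u),\,(\operatorname{Comp}_{f_{\mathbf F}}(p_i,u))_{i},\,(\operatorname{Comp}_{f_{\mathbf T}}(p_i,u))_{i}\bigr).
\]
By \eqref{eq:master-profile-support-bound} and \eqref{eq:local-profile-support-bound}, each coordinate costs at most $m(\mathcal D)$ names, counting the response. A violation gives a win. Preservation makes $\sigma$ map $C$ onto $C'$ and respect the component bits, so by Lemma~\ref{lem:preannounced-common-map} the remote unions and their factor maps coincide.

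Now apply Proposition~\ref{prop:gn-component-selection} to $G[C]$ and $H[C']$ with the partitions $\mathcal P,\mathcal Q$. This works as long as every flipped component of the coloured graph lies in one block and the colourings agree (Proposition~\ref{prop:gn-flip-isomorphism}). The proposition yields a vertex $x$ whose block is not isomorphic to its image block. Assume $x\in X_1$; the case $x\in X_2$ is symmetric with $\mathbf T$. Spoiler pebbles $x$, and the response is $x'=\sigma(x)$.

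The third step passes to the child boundary, and we expect it to be the main obstacle. The mismatch is stated with the fine palette $\lambda_{\mathbf P,v,x}$, whereas the child boundary needs $\lambda_{\mathbf F,x}$ on $[\mathbf F,x]_{f_{\mathbf F}}$. By Corollary~\ref{cor:carrier-remoteness}, every entry of $\mathbf P\setminus\mathbf F$ lies outside $X_1$. If the selected block avoids $v$ as well, Lemma~\ref{lem:remote-union-locality} shows that on it the fine palette is a function of $\lambda_{\mathbf F,x}$. The key from the second step makes this function common to both graphs. So the mismatch descends to the coarse palette, or else the refinement map would give an isomorphism.

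If the block contains $v$, the marker is not remote. In that case a second palette is preannounced: the key $(\lambda_{\mathbf F,x},J^{\mathbf F}_x)$, enforced with $2w(\mathcal D)+3$ names. It either exposes a violation or, through Lemma~\ref{lem:common-component-palettes}, synchronizes the child component together with the forgetful map from the finer tuple. Finally Spoiler removes every pair outside $\mathbf F\cup\{x\}$, giving $(\mathbf F,x),(\mathbf F',x')$. Since $[\mathbf F,x]_{f_{\mathbf F}}\subseteq X_1\subsetneq X$, this position satisfies the normalized boundary conditions on a strict child shore. Throughout, the count never exceeds carrier plus marker plus response, which is $\ell(\mathcal D)+2$, or a local key of size $2w(\mathcal D)+3$.
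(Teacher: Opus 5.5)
\textbf{Verdict: there is a genuine gap.} The one-shot selection step does not work, and the first key also exceeds the pebble budget.

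\textbf{The mixed partition is not usable.} Proposition~\ref{prop:gn-component-selection} needs partitions of $C$ and $C'$ that are coarser than the component partitions of two nonisomorphic coloured graphs $G_0,H_0$ obtained by one common rule. Your mixed partition $\mathcal P$, with $f_{\mathbf F}$-cells on $C\cap X_1$ and $f_{\mathbf T}$-cells on $C\cap X_2$, is not of this kind:
\begin{itemize}
\item Components of $G^{f_{\mathbf F}}[C]$ lying in $X_2$ need not lie in single $f_{\mathbf T}$-cells, and symmetrically for $G^{f_{\mathbf T}}[C]$ inside $X_1$.
\item A flip acting as $f_{\mathbf F}$ on $X_1$ and as $f_{\mathbf T}$ on $X_2$ is not a common type-defined flip. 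The set $C\cap X_1$ need not be a union of stable colour classes, and it has no meaning in $H$.
\end{itemize}
So $\mathcal Q$ is simply undefined. Lemmas~\ref{lem:marked-common-component-palette} and~\ref{lem:common-component-palettes} synchronize membership in the flipped component of a \emph{named marker}; they do not transport shore membership. For the same reason, the case $x\in X_2$ is not symmetric to $x\in X_1$.

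This is exactly what the paper's two palettes are for:
\begin{enumerate}
\item The first selection uses the $\mathbf F$-flip alone. It yields $w$, $C_1=[\mathbf F,w]$ and $M=C\cap C_1$.
\item The dichotomy $M\subseteq X_1$ versus $M\subseteq X_2$ is decided on the source side only.
\item If $M\subseteq X_2$, the mismatch on $M$ is reduced to $\alpha$, $C$ is rerooted as $[\mathbf S,w]$, and $v$ is released.
\item A second key containing $I_C=J^{\mathbf S}_w$ and $I_{C_1}=J^{\mathbf F}_w$ forces the new bijection to map $M$ onto $M'$.
\item Only then is a $\mathbf T$-flip selection performed inside $M$.
\end{enumerate}

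\textbf{The coordinate $\lambda_{\mathbf P,v}$ breaks the budget.} Its type formula has $|\mathbf P|+1$ anchor names plus $x,y$, so up to $\ell(\mathcal D)+3$ names. This exceeds $m(\mathcal D)$ whenever $\ell(\mathcal D)>2w(\mathcal D)$; for rank width $k$ it would give only dimension $3k+2$. The bound \eqref{eq:master-profile-support-bound} covers only the marker-free $\alpha=\lambda_{\mathbf P}$. The paper therefore splits the data into $\alpha$, $\mu_v=\lambda_{\mathbf S,v}$, $\phi_v=\lambda_{\mathbf F,v}$ and $\lambda_{\mathbf F}$, each within budget. It recovers what is needed on the selected component through Lemmas~\ref{lem:remote-union-locality} and~\ref{lem:preannounced-common-map}:
\begin{itemize}
\item if $v\in C_1$, the base is $\phi_v$ and the child marker is $v$;
\item if $v\notin C_1$, the base is $\alpha$, then $\lambda_{\mathbf F}$.
\end{itemize}

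\textbf{Problems in your third step.}
\begin{itemize}
\item A key announced before $x$ is chosen cannot contain $\lambda_{\mathbf F,x}$. The common factor map must therefore have base $\lambda_{\mathbf F}$, which $\lambda_{\mathbf F,x}$ refines.
\item Your extra palette $(\lambda_{\mathbf F,x},J^{\mathbf F}_x)$ in the case $v\in[\mathbf F,x]$ transfers no mismatch. What is needed there is $\phi_v$, already present jointly with $\alpha$, $\mu_v$ and the bits $J^{\mathbf F}_y$ in the first key.
\item You must also make $I_C$ a common function of the child palette. Only then does an isomorphism of the full components $C_1,C'_1$ restrict to the selected cells $M,M'$.
\end{itemize}
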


The proof has two possible handoffs. The first uses the $\mathbf F$-flip to
select a component meeting $C$. If the selected cell lies in $X_1$, this
already gives the first-child state. Otherwise the selected vertex replaces
the old marker, and the $\mathbf T$-flip selects a component in $X_2$. At
either handoff, whether the current marker lies inside or outside the selected
component determines the base palette used for the common-map reduction. We
record the base palette, lifted flip, remote tuple, stabilization, target
factor, and preserving coordinates separately in all four branches.

\begin{proof}
Suppress graph superscripts on source palettes and put
\begin{equation}
  \alpha=\lambda_{\mathbf P},
  \qquad
  \mu_v=\lambda_{\mathbf S,v},
  \qquad
  \phi_v=\lambda_{\mathbf F,v}.
  \label{eq:first-palette-factors}
\end{equation}
The corresponding target palettes are denoted by primes.
Whenever a stable palette $\xi$ below canonically refines
$\lambda_{\mathbf Q}$ for $\mathbf Q\in\{\mathbf F,\mathbf T\}$, write
$f_{\mathbf Q}^{\xi}$ for the common lift of the source-fixed flip
$f_{\mathbf Q}$ through the canonical factor map from $\xi$ to
$\lambda_{\mathbf Q}$. Every such lift gives exactly the
$f_{\mathbf Q}$-flipped edge relation on both graphs.

Let
\begin{equation}
  Y_0=\operatorname{val}(\mathbf P\setminus\mathbf F)\cup\{v\},
  \label{eq:first-remote-anchor-set}
\end{equation}
where repeated values are deleted, and define the first palette by
\begin{equation}
\begin{aligned}
  \mathcal K_0(u)=\bigl(&
    \alpha(u),\mu_v(u),\phi_v(u),\lambda_{\mathbf F}(u),I_C(u),\\
    &
    (J_y^{\mathbf F}(u))_{y\in Y_0}\bigr),
\end{aligned}
  \label{eq:first-transition-key}
\end{equation}
where $I_C=J_v^{\mathbf S}$.  The target key uses the occurrences and marker
mates fixed above.  Every coordinate of
\eqref{eq:first-transition-key} is enforceable within $m(\mathcal D)$ variables by
Lemma~\ref{lem:finite-key-enforcement}.

Duplicator announces a bijection
\[
  \sigma_0\colon V(G)\longrightarrow V(H)
\]
from the position $\mathbf P,v$.  A failure to preserve
$\mathcal K_0$ gives a winning response by
Lemma~\ref{lem:finite-key-enforcement}.  Assume from now on that
$\sigma_0$ preserves the key pointwise.  The carrier colours in $\alpha$ are
singletons, and the marker colour in $\mu_v$ is a singleton.  It follows
that
\begin{equation}
  \sigma_0(p_i)=p'_i\quad(i\in[r]),
  \qquad
  \sigma_0(v)=v'.
  \label{eq:first-key-fixes-anchors}
\end{equation}
The coordinate $I_C$ now gives
\begin{equation}
  \sigma_0(C)=C'.
  \label{eq:first-key-restricts-parent-component}
\end{equation}
In particular, $\sigma_0|_C$ is a bijection from $C$ to $C'$.

The key $\mathcal K_0$ contains $\lambda_{\mathbf F}$ as a coordinate.  Let
$\pi_0$ denote the projection from a $\mathcal K_0$-colour to its
$\lambda_{\mathbf F}$-colour and lift the first-child flip by
\begin{equation}
  \widehat f_{\mathbf F}(c,d)
  =f_{\mathbf F}(\pi_0(c),\pi_0(d)).
  \label{eq:first-key-lifted-flip}
\end{equation}
This lift toggles exactly the edges toggled by $f_{\mathbf F}$ in both
graphs.  Since \eqref{eq:incoming-mismatch} remains a mismatch after the
refinement to $\mathcal K_0$, Proposition~\ref{prop:gn-flip-isomorphism}
gives a mismatch between the two $\mathcal K_0$-coloured graphs induced by
$C$ and $C'$ after applying \eqref{eq:first-key-lifted-flip}.

Consider the nonempty intersection cells
\begin{equation}
  \mathcal P_F=
  \bigl\{C\cap D:D\in
    \mathcal C_{\mathbf F,f_{\mathbf F}}(G),
    \ C\cap D\neq\varnothing\bigr\}
  \label{eq:first-source-intersection-partition}
\end{equation}
and
\begin{equation}
  \mathcal P'_F=
  \bigl\{C'\cap D':D'\in
    \mathcal C_{\mathbf F',f_{\mathbf F}}(H),
    \ C'\cap D'\neq\varnothing\bigr\}.
  \label{eq:first-target-intersection-partition}
\end{equation}
These are partitions of $C$ and $C'$.  Every connected component of either
flipped induced graph is contained in one block of the corresponding
partition.  Apply
Proposition~\ref{prop:gn-component-selection} to these partitions and to
$\sigma_0|_C$.  It supplies a vertex $w\in C$.  Spoiler places the available
response pair on $w$, whose announced mate is $w'=\sigma_0(w)$.  If the
enlarged position is not a partial isomorphism, Spoiler wins immediately.
We may therefore assume that it is a partial isomorphism and put
\begin{equation}
  C_1=[\mathbf F,w]^G_{f_{\mathbf F}},
  \qquad
  C'_1=[\mathbf F',w']^H_{f_{\mathbf F}},
  \label{eq:first-selected-components}
\end{equation}
and
\begin{equation}
  M=C\cap C_1,
  \qquad
  M'=C'\cap C'_1.
  \label{eq:first-selected-cells}
\end{equation}
Both cells are nonempty. By the conclusion of the application of
Proposition~\ref{prop:gn-component-selection}, the two
$\mathcal K_0$-coloured flipped graphs induced by $M$ and $M'$ are
nonisomorphic. The restriction of
$\widehat f_{\mathbf F}$ to these cells is the same common flip on their
$\mathcal K_0$-colours. The contrapositive direction of
Proposition~\ref{prop:gn-flip-isomorphism} therefore transfers this flipped
mismatch back to the original edge relations and yields
\begin{equation}
  \bigl(G[M],\mathcal K_0|_M\bigr)
  \not\cong
  \bigl(H[M'],\mathcal K'_0|_{M'}\bigr).
  \label{eq:first-selected-mismatch}
\end{equation}

The preserved component coordinates provide all target-side membership
facts needed below.  For every $y\in Y_0$, let $y'$ be its displayed target
mate.  Evaluation at the selected response gives
\begin{equation}
  J_y^{\mathbf F}(w)
  =(J_{y'}^{\mathbf F'})'(w').
  \label{eq:first-target-bit-evaluation}
\end{equation}
In particular,
\begin{equation}
  y\notin C_1
  \quad\Longleftrightarrow\quad
  y'\notin C'_1
  \qquad(y\in Y_0),
  \label{eq:first-target-remoteness}
\end{equation}
and
\begin{equation}
  v\in C_1
  \quad\Longleftrightarrow\quad
  v'\in C'_1.
  \label{eq:first-marker-alignment}
\end{equation}
The same key contains the joint pairs
\begin{equation}
  (\mu_v,I_C),
  \quad
  (\phi_v,J_v^{\mathbf F}),
  \quad
  (\alpha,J_y^{\mathbf F})
  \quad(y\in\operatorname{val}(\mathbf P\setminus\mathbf F)).
  \label{eq:first-joint-component-pairs}
\end{equation}
Lemma~\ref{lem:common-component-palettes} therefore
makes each displayed bit a common Boolean function of the palette paired
with it.

By Proposition~\ref{prop:gn-split-pair-flip}, $C_1$ is contained in $X_1$
or in $\overline{X_1}$.  Since $M$ is nonempty and $M\subseteq C\subseteq
X$, precisely one of the following alternatives holds:
\begin{equation}
  M\subseteq X_1
  \quad\text{or}\quad
  M\subseteq X_2.
  \label{eq:first-shore-dichotomy}
\end{equation}

Suppose first that $M\subseteq X_1$. Then $C_1\subseteq X_1$ and
Corollary~\ref{cor:carrier-remoteness} gives
\begin{equation}
  \operatorname{val}(\mathbf P\setminus\mathbf F)\cap C_1
  =\varnothing.
  \label{eq:first-child-source-remoteness}
\end{equation}
Equation~\eqref{eq:first-target-remoteness} gives the corresponding
remoteness of $C'_1$ from all displayed target values.

Suppose first that $v\in C_1$. Equation~\eqref{eq:first-marker-alignment}
also gives $v'\in C'_1$. The locality application in this branch has base
palette $\phi_v$, lifted flip $f_{\mathbf F}^{\phi_v}$, and remote tuple
$\mathbf P\setminus\mathbf F$. By
Lemma~\ref{lem:successive-stabilization}, its stabilized palette satisfies
\begin{equation}
  \operatorname{Stab}
  \bigl(\phi_v^{\mathbf P\setminus\mathbf F}\bigr)
  \equiv\lambda_{\mathbf P,v}.
  \label{eq:first-child-inside-stabilization}
\end{equation}
The target factors are $\alpha$ and $\mu_v$, both obtained from
$\lambda_{\mathbf P,v}$ by canonical forgetful maps. The source remoteness
premise is \eqref{eq:first-child-source-remoteness}. Its target counterpart
follows from \eqref{eq:first-target-remoteness}. The preserved key contains
the joint coordinates
\begin{equation}
  \left(
    \phi_v,\alpha,\mu_v,
    (J_y^{\mathbf F})_{
      y\in\operatorname{val}(\mathbf P\setminus\mathbf F)}
  \right)
  \label{eq:first-child-inside-common-key}
\end{equation}
and the corresponding target coordinates. The component-bit vector in this
joint key defines the two remote unions. Lemma~\ref{lem:remote-union-locality}
first makes each target factor a graph-specific function of $\phi_v$ on
$C_1,C'_1$. Lemma~\ref{lem:preannounced-common-map} then makes the two
functions common. We obtain
\begin{equation}
  \alpha,\mu_v
  \preccurlyeq_{C_1,C'_1}\phi_v.
  \label{eq:first-child-inside-reductions}
\end{equation}
Lemma~\ref{lem:canonical-forgetful-reduct} handles
$\lambda_{\mathbf F}$.  Lemma~\ref{lem:common-component-palettes} and the joint pairs in
\eqref{eq:first-joint-component-pairs} handle $I_C$.  Finally,
$J_v^{\mathbf F}=1$ on the selected components and
$J_y^{\mathbf F}=0$ there for every
$y\in\operatorname{val}(\mathbf P\setminus\mathbf F)$.  Hence every
$\mathcal K_0$-coordinate is a common function of $\phi_v$ on the selected
components.  Such a coloured isomorphism between $C_1$ and $C'_1$ would
preserve $I_C$, hence it would preserve $M$ and $M'$, contrary to
\eqref{eq:first-selected-mismatch}.  We obtain the strict child state
\begin{equation}
  \bigl(G[C_1],\lambda_{\mathbf F,v}|_{C_1}\bigr)
  \not\cong
  \bigl(H[C'_1],\lambda'_{\mathbf F',v'}|_{C'_1}\bigr).
  \label{eq:first-child-inside-state}
\end{equation}

Suppose next that $v\notin C_1$. Equation~\eqref{eq:first-marker-alignment}
gives $v'\notin C'_1$. Two locality applications are needed. The first has
base palette $\alpha$, lifted flip $f_{\mathbf F}^{\alpha}$, remote tuple
$(v)$, and stabilization
\begin{equation}
  \operatorname{Stab}(\alpha^v)
  \equiv\lambda_{\mathbf P,v}.
  \label{eq:first-child-outside-marker-stabilization}
\end{equation}
Its target factors are $\mu_v$ and $\phi_v$. Its source and target
remoteness premises are $v\notin C_1$ and $v'\notin C'_1$. The preserved
joint coordinates are
\begin{equation}
  (\alpha,\mu_v,\phi_v,J_v^{\mathbf F})
  \label{eq:first-child-outside-marker-key}
\end{equation}
and their target counterparts. The second application has base palette
$\lambda_{\mathbf F}$, lifted flip $f_{\mathbf F}$, remote tuple
$\mathbf P\setminus\mathbf F$, and stabilization
\begin{equation}
  \operatorname{Stab}
  \bigl(\lambda_{\mathbf F}^{\mathbf P\setminus\mathbf F}\bigr)
  \equiv\alpha.
  \label{eq:first-child-outside-carrier-stabilization}
\end{equation}
Its target factor is $\alpha$. Equation~\eqref{eq:first-child-source-remoteness}
is the source remoteness premise, and
\eqref{eq:first-target-remoteness} is the target premise. The preserving
joint coordinates are
\begin{equation}
  \left(
    \lambda_{\mathbf F},\alpha,
    (J_y^{\mathbf F})_{
      y\in\operatorname{val}(\mathbf P\setminus\mathbf F)}
  \right)
  \label{eq:first-child-outside-carrier-key}
\end{equation}
and their target counterparts. For each application the displayed
component-bit vector defines the two remote unions.
Lemmas~\ref{lem:remote-union-locality} and
\ref{lem:preannounced-common-map} now give
\begin{equation}
  \mu_v,\phi_v
  \preccurlyeq_{C_1,C'_1}\alpha,
  \qquad
  \alpha\preccurlyeq_{C_1,C'_1}\lambda_{\mathbf F}.
  \label{eq:first-child-outside-reductions}
\end{equation}
All first-key component bits are then common functions of
$\lambda_{\mathbf F}$ or are constantly zero on the selected components.
The fresh palette $\lambda_{\mathbf F,w}$ refines
$\lambda_{\mathbf F}$.  An isomorphism of the two full components with this
fresh palette would preserve $M$ and every $\mathcal K_0$-coordinate.  This
again contradicts \eqref{eq:first-selected-mismatch}.  Thus
\begin{equation}
  \bigl(G[C_1],\lambda_{\mathbf F,w}|_{C_1}\bigr)
  \not\cong
  \bigl(H[C'_1],\lambda'_{\mathbf F',w'}|_{C'_1}\bigr).
  \label{eq:first-child-outside-state}
\end{equation}

It remains to treat the second alternative in
\eqref{eq:first-shore-dichotomy}.  Suppose that $M\subseteq X_2$.  Then
$C_1\subseteq\overline{X_1}$, while $v\in X_1$.  Equations
\eqref{eq:first-target-remoteness} and
\eqref{eq:first-marker-alignment} give
\begin{equation}
  v\notin C_1,
  \qquad
  v'\notin C'_1.
  \label{eq:case-two-marker-remoteness}
\end{equation}
The common prelude to the two second-child branches uses base palette
$\alpha$, lifted flip $f_{\mathbf F}^{\alpha}$, and remote tuple $(v)$.
Its stabilization is
\begin{equation}
  \operatorname{Stab}(\alpha^v)
  \equiv\lambda_{\mathbf P,v},
  \label{eq:cross-marker-stabilization}
\end{equation}
whose target factors are $\mu_v$ and $\phi_v$. Equation
\eqref{eq:case-two-marker-remoteness} supplies both source and target
remoteness. The preserved joint coordinates are
\begin{equation}
  (\alpha,\mu_v,\phi_v,J_v^{\mathbf F})
  \label{eq:cross-marker-common-key}
\end{equation}
and their target counterparts. Thus the key identifies both remote unions.
Lemmas~\ref{lem:remote-union-locality} and
\ref{lem:preannounced-common-map} yield
\begin{equation}
  \mu_v,\phi_v
  \preccurlyeq_{C_1,C'_1}\alpha.
  \label{eq:case-two-rooted-reductions}
\end{equation}
The unmarked occurrence palette is a canonical reduct of $\alpha$.  The
joint component pairs in \eqref{eq:first-joint-component-pairs} deal with
$I_C$ and $J_v^{\mathbf F}$.  For every
$y\in\operatorname{val}(\mathbf P\setminus\mathbf F)$, the same joint pairs
make $J_y^{\mathbf F}$ a common Boolean function of $\alpha$.  Therefore every
$\mathcal K_0$-coordinate is a common function of $\alpha$ on these two
components.  Equation~\eqref{eq:first-selected-mismatch} implies
\begin{equation}
  \bigl(G[M],\alpha|_M\bigr)
  \not\cong
  \bigl(H[M'],\alpha'|_{M'}\bigr).
  \label{eq:cross-alpha-mismatch}
\end{equation}

The selected response can now replace the old marker. Since $w\in C$ and
\eqref{eq:first-key-restricts-parent-component} gives
$w'=\sigma_0(w)\in C'$, the parent components may be rerooted as
\begin{equation}
  C=[\mathbf S,w]^G_{f_{\mathbf S}},
  \qquad
  C'=[\mathbf S',w']^H_{f_{\mathbf S}}.
  \label{eq:parent-component-rerooting}
\end{equation}
Consequently,
\begin{equation}
  I_C=J_w^{\mathbf S},
  \qquad
  I'_C=(J_{w'}^{\mathbf S'})'.
  \label{eq:rerooted-parent-component-bit}
\end{equation}
The old marker role is now released. If $v$ is not a
carrier value, its pebble pair is removed.  If it aliases an entry of
$\mathbf P$, one shared pair is retained only in its carrier role.  The next
predecessor position is $\mathbf P,w$, with at most
$m(\mathcal D)-1$ occupied pairs.

Set
\begin{equation}
  \mu_w=\lambda_{\mathbf S,w},
  \qquad
  \beta_w=\lambda_{\mathbf T,w},
  \qquad
  \phi_w=\lambda_{\mathbf F,w},
  \qquad
  I_{C_1}=J_w^{\mathbf F}.
  \label{eq:second-palette-factors}
\end{equation}
Let
\begin{equation}
  Y_1=\operatorname{val}(\mathbf P\setminus\mathbf T)\cup\{w\}
  \label{eq:second-remote-anchor-set}
\end{equation}
after repeated values have been deleted, and define
\begin{equation}
\begin{aligned}
  \mathcal K_1(u)=\bigl(&
    \alpha(u),\lambda_{\mathbf T}(u),\mu_w(u),\beta_w(u),\phi_w(u),\\
    &I_C(u),I_{C_1}(u),
    (J_y^{\mathbf T}(u))_{y\in Y_1}\bigr).
\end{aligned}
  \label{eq:second-transition-key}
\end{equation}
On the target, the corresponding parent and first-child coordinates are
\(I'_C=(J_{w'}^{\mathbf S'})'\) and
\(I'_{C_1}=(J_{w'}^{\mathbf F'})'\). In particular, the second key enforces
the parent coordinate through the rerooted representation
\(I_C=J_w^{\mathbf S}\), without using the released marker.
The predicate
\begin{equation}
  I_C\wedge I_{C_1}
  \label{eq:global-definition-of-M}
\end{equation}
defines $M$ on the source and $M'$ on the target by
\eqref{eq:parent-component-rerooting},
\eqref{eq:rerooted-parent-component-bit}, and the target interpretations in
\eqref{eq:second-palette-factors}. Since $\alpha$ is a coordinate
of $\mathcal K_1$, \eqref{eq:cross-alpha-mismatch} remains a
$\mathcal K_1$-mismatch on $M,M'$.

Duplicator now announces a bijection
\[
  \sigma_1\colon V(G)\longrightarrow V(H)
\]
from the position $\mathbf P,w$. If it violates
$\mathcal K_1$, Lemma~\ref{lem:finite-key-enforcement} gives a winning
response. Otherwise key preservation, together with the singleton carrier
and marker colours, gives
\begin{equation}
  \sigma_1(p_i)=p'_i\quad(i\in[r]),
  \qquad
  \sigma_1(w)=w',
  \qquad
  \sigma_1(M)=M'.
  \label{eq:second-key-fixes-data}
\end{equation}

Let $\pi_1$ project a $\mathcal K_1$-colour to its
$\lambda_{\mathbf T}$-coordinate and lift the second-child flip by
\begin{equation}
  \widehat f_{\mathbf T}(c,d)
  =f_{\mathbf T}(\pi_1(c),\pi_1(d)).
  \label{eq:second-key-lifted-flip}
\end{equation}
Proposition~\ref{prop:gn-flip-isomorphism} transfers the
$\mathcal K_1$-mismatch on $M,M'$ to the two flipped induced graphs.
Form the nonempty intersection partitions
\begin{equation}
  \mathcal P_T=
  \bigl\{M\cap D_0:D_0\in
    \mathcal C_{\mathbf T,f_{\mathbf T}}(G),
    \ M\cap D_0\neq\varnothing\bigr\}
  \label{eq:second-source-intersection-partition}
\end{equation}
and
\begin{equation}
  \mathcal P'_T=
  \bigl\{M'\cap D'_0:D'_0\in
    \mathcal C_{\mathbf T',f_{\mathbf T}}(H),
    \ M'\cap D'_0\neq\varnothing\bigr\}.
  \label{eq:second-target-intersection-partition}
\end{equation}
These are partitions of $M$ and $M'$.  Every connected component of either
flipped induced graph is contained in one block of the corresponding
partition.
Apply Proposition~\ref{prop:gn-component-selection} to these partitions and
to $\sigma_1|_M$.  Choose the resulting $z\in M$.  Spoiler places the
available response pair on $z$, whose announced mate is $z'=\sigma_1(z)$.
If the enlarged position is not a partial isomorphism, Spoiler wins
immediately.  We may therefore assume that it is a partial isomorphism and
put
\begin{equation}
  D=[\mathbf T,z]^G_{f_{\mathbf T}},
  \qquad
  D'=[\mathbf T',z']^H_{f_{\mathbf T}}.
  \label{eq:second-selected-components}
\end{equation}
The conclusion of Proposition~\ref{prop:gn-component-selection} says that
the two $\mathcal K_1$-coloured flipped graphs induced by $M\cap D$ and
$M'\cap D'$ are nonisomorphic. The restriction of
$\widehat f_{\mathbf T}$ to these cells is the same common flip on their
$\mathcal K_1$-colours. The contrapositive direction of
Proposition~\ref{prop:gn-flip-isomorphism} transfers this mismatch back to
the original edge relations. We obtain
\begin{equation}
  \bigl(G[M\cap D],\mathcal K_1|_{M\cap D}\bigr)
  \not\cong
  \bigl(H[M'\cap D'],\mathcal K'_1|_{M'\cap D'}\bigr).
  \label{eq:second-selected-mismatch}
\end{equation}

The second key contains the joint pairs
\begin{equation}
  (\mu_w,I_C),
  \quad
  (\phi_w,I_{C_1}),
  \quad
  (\beta_w,J_w^{\mathbf T}),
  \quad
  (\alpha,J_y^{\mathbf T})
  \quad(y\in\operatorname{val}(\mathbf P\setminus\mathbf T)).
  \label{eq:second-joint-component-pairs}
\end{equation}
They give common component lookups for all component coordinates in
\eqref{eq:second-transition-key}.

Since $z\in M\subseteq X_2$, Proposition~\ref{prop:gn-split-pair-flip}
gives $D\subseteq X_2$. Corollary~\ref{cor:carrier-remoteness} gives
\begin{equation}
  \operatorname{val}(\mathbf P\setminus\mathbf T)\cap D
  =\varnothing.
  \label{eq:second-source-remoteness}
\end{equation}
For every $y\in\operatorname{val}(\mathbf P\setminus\mathbf T)$, key
preservation evaluated at $z,z'$ gives
\begin{equation}
  0=J_y^{\mathbf T}(z)
  =(J_{y'}^{\mathbf T'})'(z').
  \label{eq:second-target-remoteness-evaluation}
\end{equation}
Thus $y'\notin D'$ for every displayed target value. The
$J_w^{\mathbf T}$ coordinate gives
\begin{equation}
  w\in D
  \quad\Longleftrightarrow\quad
  w'\in D'.
  \label{eq:second-marker-alignment}
\end{equation}

Let $\mathbf R_P=\mathbf P\setminus\mathbf T$. Let $\mathbf R_S$ and
$\mathbf R_F$ be fixed orderings of
$\operatorname{val}(\mathbf S)\setminus\operatorname{val}(\mathbf T)$ and
$\operatorname{val}(\mathbf F)\setminus\operatorname{val}(\mathbf T)$,
respectively. All their values belong to the carrier, so their target mates
are already fixed. We have
\begin{equation}
  \operatorname{val}(\mathbf R_S),
  \operatorname{val}(\mathbf R_F)
  \subseteq\operatorname{val}(\mathbf R_P).
  \label{eq:second-remote-tuple-containment}
\end{equation}

Suppose first that $w\in D$. Equation~\eqref{eq:second-marker-alignment}
gives $w'\in D'$. The three locality applications have the common base
palette $\beta_w$ and the common lifted flip $f_{\mathbf T}^{\beta_w}$.
Their remote tuples, stabilizations, and target factors are
\begin{align}
  \operatorname{Stab}(\beta_w^{\mathbf R_P})
    &\equiv\lambda_{\mathbf P,w}
    &&\text{with target factor }\alpha,
    \label{eq:second-child-inside-P-stabilization}\\
  \operatorname{Stab}(\beta_w^{\mathbf R_S})
    &\equiv\lambda_{\mathbf T,w,\mathbf R_S}
    &&\text{with target factor }\mu_w,
    \label{eq:second-child-inside-S-stabilization}\\
  \operatorname{Stab}(\beta_w^{\mathbf R_F})
    &\equiv\lambda_{\mathbf T,w,\mathbf R_F}
    &&\text{with target factor }\phi_w.
    \label{eq:second-child-inside-F-stabilization}
\end{align}
Here the last two stabilized palettes refine their stated target factors by
canonical forgetful maps. Equation~\eqref{eq:second-source-remoteness} gives
the source remoteness of $D$ from $\mathbf R_P$, and
\eqref{eq:second-target-remoteness-evaluation} gives the corresponding
target remoteness of $D'$. The smaller tuples inherit both conclusions from
\eqref{eq:second-remote-tuple-containment}. For the three applications,
$\mathcal K_1$ preserves the joint coordinates
\begin{equation}
\begin{aligned}
  &\left(\beta_w,\alpha,
    (J_y^{\mathbf T})_{y\in\operatorname{val}(\mathbf R_P)}\right),\\
  &\left(\beta_w,\mu_w,
    (J_y^{\mathbf T})_{y\in\operatorname{val}(\mathbf R_S)}\right),\\
  &\left(\beta_w,\phi_w,
    (J_y^{\mathbf T})_{y\in\operatorname{val}(\mathbf R_F)}\right)
\end{aligned}
  \label{eq:second-child-inside-common-keys}
\end{equation}
and the corresponding target coordinates. Each displayed bit vector defines
the relevant remote union. Lemma~\ref{lem:remote-union-locality} gives the
three graph-specific factor maps, and
Lemma~\ref{lem:preannounced-common-map} makes each map common. Hence
\begin{equation}
  \alpha,\mu_w,\phi_w\preccurlyeq_{D,D'}\beta_w.
  \label{eq:second-child-inside-reductions}
\end{equation}
$\lambda_{\mathbf T}$ is a canonical reduct of $\beta_w$. The joint pairs
in \eqref{eq:second-joint-component-pairs} handle $I_C$ and $I_{C_1}$,
while $J_w^{\mathbf T}=1$ and every remaining displayed
$\mathbf T$-component bit is zero on $D,D'$. Thus every
$\mathcal K_1$-coordinate is a common function of $\beta_w$ on $D,D'$.
An isomorphism of the full components with the
$\beta_w$ palettes would preserve the global predicate
\eqref{eq:global-definition-of-M}, and its restriction would contradict
\eqref{eq:second-selected-mismatch}.  Hence
\begin{equation}
  \bigl(G[D],\lambda_{\mathbf T,w}|_D\bigr)
  \not\cong
  \bigl(H[D'],\lambda'_{\mathbf T',w'}|_{D'}\bigr).
  \label{eq:second-child-inside-state}
\end{equation}

Suppose next that $w\notin D$. Equation~\eqref{eq:second-marker-alignment}
gives $w'\notin D'$. There are four locality applications.

The first has base palette $\lambda_{\mathbf T}$, lifted flip
$f_{\mathbf T}$, remote tuple $\mathbf R_P$, stabilization
\begin{equation}
  \operatorname{Stab}(\lambda_{\mathbf T}^{\mathbf R_P})
  \equiv\alpha,
  \label{eq:second-child-outside-P-stabilization}
\end{equation}
and target factor $\alpha$. The preserved joint coordinates are
\begin{equation}
  \left(\lambda_{\mathbf T},\alpha,
    (J_y^{\mathbf T})_{y\in\operatorname{val}(\mathbf R_P)}\right).
  \label{eq:second-child-outside-P-key}
\end{equation}
The second has the same base palette and lifted flip, remote tuple $(w)$,
stabilization
\begin{equation}
  \operatorname{Stab}(\lambda_{\mathbf T}^{w})
  \equiv\beta_w,
  \label{eq:second-child-outside-marker-stabilization}
\end{equation}
and target factor $\beta_w$. Its preserved joint coordinates are
\begin{equation}
  (\lambda_{\mathbf T},\beta_w,J_w^{\mathbf T}).
  \label{eq:second-child-outside-marker-key}
\end{equation}
The third has base palette $\beta_w$, lifted flip
$f_{\mathbf T}^{\beta_w}$, remote tuple $\mathbf R_S$, stabilization
\begin{equation}
  \operatorname{Stab}(\beta_w^{\mathbf R_S})
  \equiv\lambda_{\mathbf T,w,\mathbf R_S},
  \label{eq:second-child-outside-S-stabilization}
\end{equation}
and target factor $\mu_w$. Its preserved joint coordinates are
\begin{equation}
  \left(\beta_w,\mu_w,
    (J_y^{\mathbf T})_{y\in\operatorname{val}(\mathbf R_S)}\right).
  \label{eq:second-child-outside-S-key}
\end{equation}
The fourth has the same base palette and lifted flip, remote tuple
$\mathbf R_F$, stabilization
\begin{equation}
  \operatorname{Stab}(\beta_w^{\mathbf R_F})
  \equiv\lambda_{\mathbf T,w,\mathbf R_F},
  \label{eq:second-child-outside-F-stabilization}
\end{equation}
and target factor $\phi_w$. Its preserved joint coordinates are
\begin{equation}
  \left(\beta_w,\phi_w,
    (J_y^{\mathbf T})_{y\in\operatorname{val}(\mathbf R_F)}\right).
  \label{eq:second-child-outside-F-key}
\end{equation}

For the first, third, and fourth applications, source and target remoteness
follow from \eqref{eq:second-source-remoteness},
\eqref{eq:second-target-remoteness-evaluation}, and
\eqref{eq:second-remote-tuple-containment}. For the second application they
are $w\notin D$ and $w'\notin D'$. Thus every displayed component-bit vector
defines the required two remote unions. Lemma~\ref{lem:remote-union-locality}
gives four graph-specific reductions, and four applications of
Lemma~\ref{lem:preannounced-common-map} make them common. We obtain
\begin{equation}
  \alpha\preccurlyeq_{D,D'}\lambda_{\mathbf T},
  \qquad
  \beta_w\preccurlyeq_{D,D'}\lambda_{\mathbf T},
  \qquad
  \mu_w,\phi_w\preccurlyeq_{D,D'}\beta_w.
  \label{eq:second-child-outside-reductions}
\end{equation}
The fresh
palette $\lambda_{\mathbf T,z}$ refines $\lambda_{\mathbf T}$.  Every
second-key coordinate is therefore a common function of this fresh palette,
or a constant, on $D,D'$.  Equation~\eqref{eq:second-selected-mismatch}
gives
\begin{equation}
  \bigl(G[D],\lambda_{\mathbf T,z}|_D\bigr)
  \not\cong
  \bigl(H[D'],\lambda'_{\mathbf T',z'}|_{D'}\bigr).
  \label{eq:second-child-outside-state}
\end{equation}

The first two branches descend to $X_1$.  The last two descend to $X_2$.
The predecessor positions are $\mathbf P,v$ and, only in the second-child
case, $\mathbf P,w$. They use at most $m(\mathcal D)-1$ pairs. Their response
positions add only $w$ or $z$, respectively, and use at most $m(\mathcal D)$ pairs.
If a selected response already carries a pebble pair, a different target
mate violates equality immediately.  With the same mate, the two pairs have
identical endpoints and the duplicate response role may be released.
Once one of the four successor states has been established, all obsolete
roles are discarded.  First remove duplicate pairs while retaining one pair
for each child value and the child marker.  Then remove every pair whose
vertex has no retained role.  The resulting boundary contains one pair for
each distinct value in the chosen child occurrence and its marker.  This
proves strict descent and the claimed transition bound.
\end{proof}

\section{Closing the source-guided induction}
\label{sec:induction}

Let $G$ be a finite nonempty vertex-coloured graph with a fixed rooted rank
decomposition $\mathcal D=(T,\gamma)$, and let $H$ be a graph of the same
order over the same finite colour vocabulary. Put $m=m(\mathcal D)$. All
split pairs, carrier
orders, flip functions, and lookup tables in this section are chosen from
$G$. The same totalized functions are evaluated on $H$. In particular, no
rank decomposition of $H$ is used.

For a node $t$ of the source decomposition, write $X_t$ for its shore. We
choose all ordered split-pair occurrences top down. At the root, take $S_t$
to be empty. Once $S_t$ has been fixed at an internal node $t$, apply the
strengthened niceness clause of Proposition~\ref{prop:gn-nice-split-pairs}
to $S_t$ and choose the two nice child occurrences. Declare these occurrences
to be $S_{t_1}$ and $S_{t_2}$ for the two children. Thus the occurrence
retained by a handoff at $t$ is exactly the occurrence used at the next
recursive boundary. Put $f_t=f_{X_t,S_t}$, with the zero flip at the root.

\begin{definition}[Recursive boundary]
\label{def:recursive-boundary}
A recursive boundary at $t$ consists of corresponding occurrences
$S_t,S_t'$ and corresponding markers $v,v'$ with the following properties.

\begin{enumerate}
  \item The current pebble position sends every occurrence of $S_t$ to the
  corresponding occurrence of $S_t'$ and sends $v$ to $v'$. It is a partial
  isomorphism. Repeated or coincident occurrences satisfy the same equality
  pattern in the two graphs.

  \item With
  \[
    C=[S_t,v]^{G}_{f_t}
    \quad\text{and}\quad
    C'=[S_t',v']^{H}_{f_t},
  \]
  one has $C\subseteq X_t$ and
  \begin{equation}
    \bigl(G[C],\lambda^G_{S_t,v}|C\bigr)
    \not\cong
    \bigl(H[C'],\lambda^H_{S_t',v'}|C'\bigr).
    \label{eq:recursive-boundary-invariant}
  \end{equation}

  \item Exactly one pebble pair represents each distinct value in
  $S_t\cup\{v\}$, and no other pebble pair remains on the board.
\end{enumerate}
\end{definition}

The third condition is a normalization rather than an additional semantic
assumption.  Each handoff first proves the successor mismatch using only the
retained child occurrence and marker, and only then removes the obsolete
pairs.  During a formula challenge, a pair is removed only when its variable
is absent from the free support of the distinguishing formula.  No general
downward-monotonicity claim for losing positions is used.  The normalization
also makes the resource bound explicit. Since $S_t$ has
$2r_{\mathcal D}(t)$ entries, the boundary uses at most
\[
  2r_{\mathcal D}(t)+1
  \leq2w(\mathcal D)+1
  <m
\]
pebble pairs.

\begin{lemma}[Game-first strict-child descent]
\label{lem:game-first-child-descent}
Suppose that a recursive boundary has been reached at an internal node $t$.
Spoiler has a strategy in $\operatorname{BP}_m(G,H)$
that either wins or reaches a recursive boundary at one of the two children
of $t$. Throughout this strategy at most $m$ pebble pairs are used.
\end{lemma}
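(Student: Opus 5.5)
The plan is to reduce the statement to Lemma~\ref{lem:two-palette-transition}, by checking that a recursive boundary in the sense of Definition~\ref{def:recursive-boundary} at $t$ is exactly the set of normalized boundary conditions at $t$ used there. The only adaptation is the choice of child occurrences.

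First I will identify the data. Set $\mathbf S=S_t$, $\mathbf F=S_{t_1}$ and $\mathbf T=S_{t_2}$. After relabelling the children so that $v\in X_1$, these are the nice child occurrences chosen top down from $S_t$ by Proposition~\ref{prop:gn-nice-split-pairs}. This is precisely the choice assumed in Section~\ref{sec:carrier} and Section~\ref{sec:transition}. Hence Lemma~\ref{lem:shared-carrier} and Corollaries~\ref{cor:profile-carrier} and~\ref{cor:carrier-remoteness} apply to the carrier $\mathbf P$, which has $|\mathbf P|\leq\ell_{\mathcal D}(t)$. The marker $v$ lies in $C\subseteq X_t=X_1\mathbin{\dot\cup}X_2$, so the relabelling is possible. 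The flip $f_t$ equals $f_{X,\mathbf S}$. At the root, $S_t$ is empty and the flip is zero, so $C$ is the component of $v$ in $G$. I must check that Proposition~\ref{prop:gn-split-pair-flip} is consistent with this: with $X=V(G)$ the claim $C\subseteq X$ is trivial, and the zero flip is the one supplied for the empty pair, since the cut matrix is empty. Conditions~1 and~3 of the definition give the pebble normalization required before the lemma, and condition~2 gives \eqref{eq:incoming-components} and \eqref{eq:incoming-mismatch} verbatim.

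Next I will invoke Lemma~\ref{lem:two-palette-transition}. It has two outcomes. Either Spoiler wins during carrier acquisition or a key probe, which gives the first alternative of the statement. Or Spoiler reaches one of the four successor states \eqref{eq:first-child-inside-state}, \eqref{eq:first-child-outside-state}, \eqref{eq:second-child-inside-state}, \eqref{eq:second-child-outside-state}, with marker $x\in\{v,w,z\}$ and occurrence $\mathbf F=S_{t_1}$ or $\mathbf T=S_{t_2}$. In each case I will verify the three clauses of the recursive boundary at the child.

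For clause~2 I need two facts. First, the selected component is contained in the child shore. This follows from Proposition~\ref{prop:gn-split-pair-flip} together with the case hypotheses $C_1\subseteq X_1$ and $D\subseteq X_2$. Second, the displayed nonisomorphism holds with $\lambda_{S_{t_i},x}$. The component is written as $[\mathbf F,w]$ or $[\mathbf F,v]$ as appropriate; when $v\in C_1$ the two descriptions coincide, and likewise in the $\mathbf T$-branches.

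For clauses~1 and~3 I will use the final cleanup step of the lemma's proof. The retained pairs send the child occurrence to its target projection \eqref{eq:target-occurrence-projection} and the marker to its mate, and the position was checked to be a partial isomorphism. Removing pebbles preserves the partial-isomorphism property. After removal exactly one pair per distinct value remains.

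The pebble bound comes from the lemma itself. The predecessor positions $\mathbf P,v$ and $\mathbf P,w$ use at most $\ell_{\mathcal D}(t)+1\leq m-1$ pairs, and each response adds one more, giving at most $m$. The key probes stay within $m$ variables by \eqref{eq:master-profile-support-bound} and \eqref{eq:local-profile-support-bound}.

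The main obstacle I expect is the consistency of occurrences across levels. The lemma's $\mathbf F,\mathbf T$ must be literally the occurrences $S_{t_1},S_{t_2}$ that will serve as the parent occurrence at the next step, including their orderings and repetitions. Otherwise the canonical palettes $\lambda_{S_{t_i},x}$ and the source-fixed flips $f_{t_i}$ would not match the next invocation. This is guaranteed by the top-down fixing convention stated before the definition, and I will emphasise that the transition never re-chooses split pairs and never uses a decomposition of $H$. A secondary point is the root case, where the parent cut rank is zero. There the normalized position is just $(v,v')$, the component is an ordinary component, and $\ell_{\mathcal D}(t)=r(t_1)+r(t_2)$, so the same accounting applies unchanged.
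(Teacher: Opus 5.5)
Your proposal is correct and follows essentially the same route as the paper: fix $\mathbf S=S_t$, $\mathbf F=S_{t_1}$, $\mathbf T=S_{t_2}$ from the top-down choice, relabel so that $v\in X_1$, note that the recursive boundary is exactly the normalized boundary condition, and invoke Lemma~\ref{lem:two-palette-transition} for both the strict-child descent and the peak bound $m$. Your clause-by-clause check of the child boundary is a harmless elaboration; the one small imprecision is that the zero flip at the root is a convention the paper fixes, not something forced by Lemma~3.6 of Grohe and Neuen, although any flip satisfies Proposition~\ref{prop:gn-split-pair-flip} when $X=V(G)$.
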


\begin{proof}
Let the two child shores be $X_1$ and $X_2$. Let $S=S_t$, and let
$F=S_{t_1}$ and $T=S_{t_2}$ be the occurrences fixed by the top-down
choice. Relabel the children so that $v\in X_1$. Let $P$ be their shared
carrier. The incoming component and mismatch supplied by the
recursive boundary are exactly the hypotheses in
\eqref{eq:incoming-components} and \eqref{eq:incoming-mismatch}, after this
renaming of the source occurrences.  Lemma~\ref{lem:two-palette-transition}
therefore applies.  It either gives Spoiler a winning continuation or
reaches a normalized recursive boundary at the first or second child.
Both are strict children
because $t$ is internal. The transition lemma also gives the peak bound
$m$, including every nonpreserving-key challenge and every alias case.
\end{proof}

\subsection{The root and the leaves}

\begin{lemma}[Root initialization]
\label{lem:root-initialization}
Let $G$ and $H$ have the same order and suppose that they are not isomorphic
as vertex-coloured graphs. Spoiler can either win immediately or establish
a recursive boundary at the root using one pebble pair.
\end{lemma}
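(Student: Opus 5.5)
At the root $t_0$ we have $X_{t_0}=V(G)$, the occurrence $S_{t_0}$ is empty, and $f_{t_0}$ is the zero flip. A recursive boundary therefore only needs one marker pair $v,v'$ such that the $\lambda^G_{v}$-coloured component of $v$ in $G$ is not isomorphic to the $\lambda^H_{v'}$-coloured component of $v'$ in $H$. The plan is to extract this pair from a single bijection announced by Duplicator, using component selection.

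Spoiler requests the first pebble pair, and Duplicator announces a bijection $\sigma\colon V(G)\to V(H)$. We apply Proposition~\ref{prop:gn-component-selection} with $G_0=G$ and $H_0=H$, taking for $\mathcal P$ and $\mathcal Q$ the partitions into connected components of the unflipped graphs; with the zero flip these are exactly $\mathcal C_{f_{t_0}}$. This gives a vertex $v$ with $G[P_v]\not\cong H[Q_{\sigma(v)}]$ as vertex-coloured graphs. Spoiler pebbles $v$, and the mate is $v'=\sigma(v)$. If $v,v'$ differ in input colour, the position is not a partial isomorphism and Spoiler wins immediately. Otherwise conditions (1) and (3) of Definition~\ref{def:recursive-boundary} hold with a single pair, and $C=P_v\subseteq V(G)=X_{t_0}$.

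For condition (2) we must upgrade the mismatch from input colours to the palettes $\lambda_v$ and $\lambda'_{v'}$. The palette $\lambda_v$ refines the input colouring through the canonical forgetful map of Lemma~\ref{lem:canonical-forgetful-reduct}, and this map is the same on both graphs. Any isomorphism $(G[C],\lambda_v|_C)\cong(H[C'],\lambda'_{v'}|_{C'})$ would preserve the canonical colours, hence would preserve $\chi_G$ after composing with that common map. It would therefore be an input-coloured isomorphism, which contradicts the choice of $v$.

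The one subtle point is that the canonical colours are common universal names, so the isomorphism must send each vertex to a vertex of the same complete type, and the forgetful map to input colours is graph-independent. Both facts follow from the canonical naming in Section~\ref{subsec:source-preliminaries}, so the argument reduces to checking the equal-order hypothesis of Proposition~\ref{prop:gn-component-selection} and recording that exactly one pebble pair is used.
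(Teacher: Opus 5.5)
Your proposal is correct and follows the paper's proof step for step. It applies component selection to the ordinary connected-component partitions (the root flip being zero), takes an atomic win when the input colours of $v$ and $v'=\sigma(v)$ differ, and observes that an isomorphism of the $\lambda_v$-coloured components would remain an input-coloured isomorphism once the canonical colours are forgotten. Invoking Lemma~\ref{lem:canonical-forgetful-reduct} for that last step is valid but more than you need, since the round-zero coordinate of the common-round code already contains $\chi$.
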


\begin{proof}
At the root the split-pair occurrence is empty and the flip is zero. The
full flipped components are therefore the ordinary connected components.
After Duplicator announces the first bijection, apply
Proposition~\ref{prop:gn-component-selection} to the partitions into
vertex-coloured connected components. Spoiler chooses a vertex $v$ in a
source component whose paired target component is not isomorphic, and the
reply is $v'$. If the input colours of $v$ and $v'$ differ, this is an atomic
win. Otherwise the one-pair position is a partial isomorphism. An isomorphism
between the two restrictions coloured by $\lambda_v$ and
$\lambda_{v'}$ would remain an isomorphism after the canonical colours were
forgotten. Hence the marked restrictions are nonisomorphic, which is
\eqref{eq:recursive-boundary-invariant} at the root.
\end{proof}

\begin{lemma}[Leaf closure]
\label{lem:leaf-closure}
Spoiler wins from every recursive boundary at a leaf with at most $m$
pebble pairs.
\end{lemma}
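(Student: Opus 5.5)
At a leaf $t$ the shore $X_t$ is a singleton, say $X_t=\{x\}$. Since $C\subseteq X_t$ by the second boundary condition and $v\in C$, we have $C=\{v\}=\{x\}$. The plan is to show that $C'$ must then contain a vertex other than $v'$, and to let Spoiler exploit this with one extra pebble pair.

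First, observe that the singleton source side makes the mismatch \eqref{eq:recursive-boundary-invariant} concrete. The boundary position maps $v$ to $v'$ and is a partial isomorphism, so $v$ and $v'$ carry equal input colours. The colours $\lambda^G_{S_t,v}(v)$ and $\lambda^H_{S_t',v'}(v')$ may still differ as complete canonical types. Suppose $C'=\{v'\}$ and the two marked types agree at the marker. Then the one-vertex coloured graphs would be isomorphic, contradicting \eqref{eq:recursive-boundary-invariant}. So either (a) $\lambda^G_{S_t,v}(v)\neq\lambda^H_{S_t',v'}(v')$, or (b) $C'$ contains some vertex other than $v'$.

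Case (a) is handled by Lemma~\ref{lem:finite-key-enforcement}, applied to the one-coordinate key $\lambda_{S_t,v}$ realised through Lemma~\ref{lem:finite-type-formulas}. Concretely, the source type formula $\theta^{2n}_{s}(\bar x,x)$ with $s=\lambda^{G,[2n]}_{S_t,v}(v)$ holds at $(S_t,v)$ but fails at $(S_t',v')$ by Lemma~\ref{lem:finite-complete-type-bridge}. This formula uses $2r_{\mathcal D}(t)+2\le m$ names, and since all its free variables are already pebbled, \eqref{eq:pointed-logic-game-equivalence} gives Spoiler a win directly.

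Case (b) is the main obstacle, because we must certify a size discrepancy of components in $H$ using only source-fixed data. Duplicator announces a bijection $\sigma$, and Spoiler enforces the key $u\mapsto\bigl(\lambda_{S_t,v}(u),J^{S_t}_v(u)\bigr)$. By Lemma~\ref{lem:component-predicate}, the component bit uses $2r_{\mathcal D}(t)+3\le 2w(\mathcal D)+3\le m$ names; the pebble count of this step is exactly what should be checked carefully. If the key is violated, Lemma~\ref{lem:finite-key-enforcement} gives a win. If it is preserved, the marker class is a singleton, so $\sigma(v)=v'$, and $\sigma$ maps the set where $J^{S_t}_v=1$, namely $\{v\}$, onto $C'$. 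Hence $|C'|=1$, contradicting case (b). In practice, Spoiler simply pebbles a vertex $u$ with $\sigma(u)\in C'\setminus\{v'\}$, which violates the component coordinate, and Lemma~\ref{lem:finite-key-enforcement} finishes. Throughout, the position holds at most $2r_{\mathcal D}(t)+1<m$ pairs before the response, so the bound of $m$ pebble pairs is respected, with alias cases handled by the equality-stratum substitution \eqref{eq:equality-stratum-substitution}.
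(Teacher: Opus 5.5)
Your proof is correct and follows essentially the paper's argument: the paper splits on $|C'|\neq1$ versus $|C'|=1$, uses the counting formula saying that exactly one vertex lies in the marked component in the first case, and uses a type formula for the marker in the second. Your key-enforcement challenge with $(\lambda_{S_t,v},J^{S_t}_v)$ is just the game-side unfolding of that counting formula. One small correction: the marked occurrence $(S_t,v)$ has $2r_{\mathcal D}(t)+1$ anchor entries, so by Lemma~\ref{lem:finite-type-formulas} the type formula in case (a) uses $2r_{\mathcal D}(t)+3$ names, not $2r_{\mathcal D}(t)+2$; this still fits within $2w(\mathcal D)+3\leq m$. Also, if the pebbled $u$ in case (b) happens to be $v$ itself, the component bit is not what fails; the $\lambda$-coordinate (equivalently the equality pattern) is violated instead, and Spoiler still wins.
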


\begin{proof}
If $t$ is a leaf, then $X_t=\{v\}$ and the source component in the invariant
is $C=\{v\}$. The predicate defining the target component $C'$ is available
by Lemma~\ref{lem:component-predicate}. If $|C'|\neq1$, the counting formula
that asserts that exactly one vertex belongs to the marked component
distinguishes the two pointed structures. If $|C'|=1$, then $C'=\{v'\}$.
The nonisomorphism in \eqref{eq:recursive-boundary-invariant} now implies
that the canonical marked types of $v$ and $v'$ differ. A finite type formula
from Lemma~\ref{lem:finite-type-formulas} distinguishes them.

The component test uses the $2r_{\mathcal D}(t)$ entries of $S_t$, the
marker, and two local mover names. It therefore uses at most
\[
  2r_{\mathcal D}(t)+3
  \leq2w(\mathcal D)+3
  \leq m
\]
variables.
Before the pointed formula strategy begins, any pebble outside its free
support is obsolete and is removed.
\end{proof}

\subsection{Finitary closure and the main theorem}

The preceding strategy uses only finitary predicates. For two graphs of
order $n$, the common canonical types stabilize after a fixed finite number
of refinement rounds. The component predicates use paths of length at most
$n-1$. The source decomposition has finite height. At every announcement
there are only finitely many bijections, and every violated coordinate has a
finite pointed formula. The complete strategy tree therefore has a finite
winning horizon.

For a rooted rank decomposition $\mathcal D$, set
\begin{equation}
  d(\mathcal D)
  =\max\{\ell(\mathcal D)+1,\,2w(\mathcal D)+2\}.
  \label{eq:profile-dimension-bound}
\end{equation}
Thus $m(\mathcal D)=d(\mathcal D)+1$.

\begin{theorem}[Decomposition-profile identification]
\label{thm:main}
Let $G$ be a finite nonempty vertex-coloured graph over a fixed finite colour
vocabulary, and let $\mathcal D$ be a rooted rank decomposition of $G$.
Then $d(\mathcal D)$-dimensional Weisfeiler-Leman identifies $G$.
Equivalently, $G$ has an identifying sentence in
$\mathsf C^{d(\mathcal D)+1}$, and Spoiler wins
$\BP_{d(\mathcal D)+1}(G,H)$ against every same-order nonisomorphic graph
$H$ over the same vocabulary.
\end{theorem}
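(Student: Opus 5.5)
The plan is to assemble the game strategy from the three lemmas of this section and then translate the resulting Spoiler win into the Weisfeiler-Leman and logic formulations through Proposition~\ref{prop:gn-wl-game-logic}. Fix $G$, $\mathcal D$ and $m=m(\mathcal D)=d(\mathcal D)+1\geq2$. Let $H$ be any graph over the same vocabulary with $H\not\cong G$. If $|V(H)|\neq|V(G)|$, a one-variable counting sentence separates them, so I may assume equal order. All split pairs, carriers, flips and lookup tables are fixed from $G$ in advance, by the top-down choice preceding Definition~\ref{def:recursive-boundary}. The strategy is therefore a single source-determined procedure evaluated against an arbitrary $H$.

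First I apply Lemma~\ref{lem:root-initialization}. Either Spoiler wins at once, or he reaches a recursive boundary at the root with one pebble pair. Next I iterate Lemma~\ref{lem:game-first-child-descent}. From a boundary at an internal node $t$, Spoiler either wins or reaches a boundary at a strict child of $t$, using at most $m$ pairs throughout. Since $\mathcal T$ is finite, the depth strictly increases and this process terminates after at most the height of $\mathcal T$ steps. It ends either with a win or with a boundary at a leaf, where Lemma~\ref{lem:leaf-closure} gives a win with at most $m$ pairs. Condition~3 of Definition~\ref{def:recursive-boundary} ensures that each boundary occupies at most $2w(\mathcal D)+1<m$ pairs. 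The iterated strategy therefore never exceeds the budget $m$ when it is composed, because each lemma starts from exactly such a normalized position. The finitary-closure remarks preceding the theorem give a finite winning horizon: there are finitely many bijections at each announcement, finite pointed formulas, and bounded refinement rounds and path lengths. Hence Spoiler genuinely wins $\BP_m(G,H)$, not merely avoids losing.

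Finally, Proposition~\ref{prop:gn-wl-game-logic} converts the Spoiler win in $\BP_{d(\mathcal D)+1}(G,H)$ into $G\not\simeq_{d(\mathcal D)}H$ and into a $\mathsf C^{d(\mathcal D)+1}$ sentence separating $G$ from $H$. Since $H$ was an arbitrary nonisomorphic graph, $d(\mathcal D)$-dimensional Weisfeiler-Leman identifies $G$. The finite-conjunction argument cited after that proposition combines the pairwise separators into one identifying sentence. This is legitimate because, for fixed order, there are only finitely many $\mathsf C^{d+1}$-equivalence classes, and the order is fixed by a one-variable sentence.

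The main obstacle I expect is the bookkeeping at the junctions rather than any new mathematics. The boundary produced by Lemma~\ref{lem:two-palette-transition} at a child must match Definition~\ref{def:recursive-boundary} at that child exactly. In particular, the child occurrence retained by the handoff must be the one chosen top down, so that the next fork's carrier bound (Corollary~\ref{cor:profile-carrier}) applies. I would verify this explicitly using the top-down choice of $S_{t_1},S_{t_2}$. I would also check the degenerate cases: a one-vertex graph, where the root is a leaf and $m\geq3$, and alias cases where the marker coincides with an anchor. The resource accounting there already decreases the pebble count.
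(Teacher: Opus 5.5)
Your proposal is correct and matches the paper's proof step for step: root initialization, iterated strict-child descent via Lemma~\ref{lem:game-first-child-descent} within the budget $m(\mathcal D)=d(\mathcal D)+1$, leaf closure, and then Proposition~\ref{prop:gn-wl-game-logic} together with the finite conjunction over the finitely many order-$n$ isomorphism types and the exact-order sentence. The junction check you flag, that the retained child occurrence is the top-down choice $S_{t_1},S_{t_2}$, is exactly what the top-down selection before Definition~\ref{def:recursive-boundary} guarantees, so no further argument is needed there.
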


\begin{proof}
Fix a nonisomorphic same-order target $H$. Lemma~\ref{lem:root-initialization}
establishes the recursive invariant unless Spoiler has already won.
Lemma~\ref{lem:game-first-child-descent} moves every nonterminal invariant to
a strict child while respecting the $m(\mathcal D)$ pebble bound. Repeated strict
descent reaches a leaf, where Lemma~\ref{lem:leaf-closure} gives a win. This
proves the game assertion.

Proposition~\ref{prop:gn-wl-game-logic} converts the pairwise game win into a
pairwise separator in $\mathsf C^{d(\mathcal D)+1}$ and into distinction by
$d(\mathcal D)$-dimensional Weisfeiler-Leman. To obtain one identifying sentence,
fix the order $n=|V(G)|$. There are only finitely many isomorphism types of
vertex-coloured graphs of order $n$ over the fixed finite vocabulary. For
each nonisomorphic type choose a $\mathsf C^{d(\mathcal D)+1}$ sentence that is true on
$G$ and false on that type. Their finite conjunction still belongs to
$\mathsf C^{d(\mathcal D)+1}$. Conjoining the exact-order sentence
\[
  \exists^{=n}x\,(x=x)
\]
excludes every graph of a different order. This completes the proof.
\end{proof}

\begin{remark}[Uniform initial colours]
\label{rem:uniform-initial-colours}
The game and Weisfeiler-Leman assertions above are uniform over arbitrary
finite initial colour maps. The source and target may use any common finite
set of colour labels, and the dimension bound does not depend on the number
of labels that occur. The fixed-vocabulary hypothesis is needed only when
the pairwise separators are combined into one counting-logic sentence.
This uniform form will be used for canonization.
\end{remark}

\begin{corollary}[Rank-width bound]
\label{cor:rank-width-bound}
Let $k\geq0$. Every finite nonempty vertex-coloured graph of rank width at most $k$
is identified by $(3k+1)$-dimensional Weisfeiler-Leman. Equivalently, it has
an identifying sentence in $\mathsf C^{3k+2}$, and Spoiler wins with
$3k+2$ pebble pairs against every same-order nonisomorphic graph.
\end{corollary}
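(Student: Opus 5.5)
The plan is to apply Theorem~\ref{thm:main} to an optimal rooted rank decomposition and to bound $d(\mathcal D)$ by $3k+1$. The only case needing separate treatment is $k=0$.

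For $k\geq1$, choose a rooted rank decomposition $\mathcal D$ of $G$ of width $\rw(G)\leq k$, so $w(\mathcal D)\leq k$. The paper's rooted presentation makes this possible: it follows \cite[Section~2.2]{GroheNeuen2023}, and $\rw$ is defined as the minimum over exactly these rooted decompositions. Every fork load is a sum of three cut ranks, so $\ell(\mathcal D)\leq3w(\mathcal D)\leq3k$. This gives
\[
  d(\mathcal D)=\max\{\ell(\mathcal D)+1,2w(\mathcal D)+2\}\leq\max\{3k+1,2k+2\}=3k+1,
\]
and the last equality uses $k\geq1$. By Theorem~\ref{thm:main}, $d(\mathcal D)$-dimensional Weisfeiler-Leman identifies $G$. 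Spoiler wins $\BP_{d(\mathcal D)+1}(G,H)$, and an identifying $\mathsf C^{d(\mathcal D)+1}$ sentence exists. Each of these statements is monotone in the dimension. Spoiler may ignore extra pebbles, a $\mathsf C^{m}$ sentence is also a $\mathsf C^{m'}$ sentence for $m'\geq m$, and higher-dimensional WL refines lower-dimensional WL. Hence all three forms hold with $3k+1$ dimensions and $3k+2$ pebbles or variables. Proposition~\ref{prop:gn-wl-game-logic} ties the three formulations together.

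For $k=0$ the general bound reads $d(\mathcal D)\leq\max\{1,2\}=2$, which exceeds the claimed $3k+1=1$. This is the main obstacle, and it needs a direct argument. A graph of rank width zero is edgeless: any edge $uv$ would give a nonzero entry in $M_G(X_t,\overline{X_t})$ at the leaf $t$ with $X_t=\{u\}$. Let $H$ be a same-order graph not isomorphic to $G$. If $H$ has an edge, count vertices with at least one neighbour. The sentence $\neg\exists x\,\exists y\,E(x,y)$ uses two variables, so I will instead use the two-pebble game. Duplicator announces any bijection; Spoiler pebbles a vertex whose image $h'$ has a neighbour. Spoiler then places the second pebble, and by bijectivity some source vertex maps onto a neighbour of $h'$. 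Its source partner is nonadjacent, so Spoiler wins $\BP_2$. This is 1-dimensional WL, consistent with $3k+2=2$ pebbles. If $H$ is edgeless, the two graphs differ only in their colour histograms, and one-variable counting sentences $\exists^{\geq t}x\,\chi(x)$ separate them. The pairwise separators combine into a single $\mathsf C^{2}$ sentence by the finite conjunction argument used in the proof of Theorem~\ref{thm:main}, together with the exact-order sentence.
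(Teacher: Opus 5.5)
Your proof is correct and follows the paper's argument. For $k\geq1$ you apply Theorem~\ref{thm:main} to a width-optimal decomposition using $\ell(\mathcal D)\leq3w(\mathcal D)$, and you treat the edgeless case $k=0$ separately; your explicit $\BP_2$ strategy and histogram sentences spell out the paper's one-line remark that colour refinement identifies edgeless graphs, and incidentally the two-variable sentence $\neg\exists x\,\exists y\,E(x,y)$ already lies in $\mathsf C^{2}=\mathsf C^{3k+2}$, so switching to the game there was unnecessary.
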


\begin{proof}
If $k=0$, the graph is edgeless. One-dimensional colour refinement records
the multiplicity of every initial colour and therefore identifies it. Assume
$k\geq1$.
Choose a rank decomposition $\mathcal D$ of width at most $k$. Then
$w(\mathcal D)\leq k$ and $\ell(\mathcal D)\leq3k$. Hence
\[
  d(\mathcal D)
  \leq\max\{3k+1,2k+2\}
  =3k+1.
\]
Theorem~\ref{thm:main} gives the result.
\end{proof}

\begin{corollary}[Linear-rank-width bound]
\label{cor:linear-rank-width-bound}
Let $k\geq0$. Every finite nonempty vertex-coloured graph of linear rank width at
most $k$ is identified by $(2k+2)$-dimensional Weisfeiler-Leman.
Equivalently, it has an identifying sentence in $\mathsf C^{2k+3}$, and
Spoiler wins with $2k+3$ pebble pairs against every same-order
nonisomorphic graph.
\end{corollary}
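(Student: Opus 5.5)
The plan is to reduce Corollary~\ref{cor:linear-rank-width-bound} to Theorem~\ref{thm:main} by converting an optimal linear layout into a rooted rank decomposition with small fork load. The case $k=0$ is handled exactly as in Corollary~\ref{cor:rank-width-bound}: linear rank width zero means every prefix cut has rank zero. A vertex $v_i$ with a neighbour $v_j$, $i<j$, would give the cut $\{v_1,\ldots,v_i\}$ rank at least one, so the graph is edgeless, and colour refinement identifies it. So assume $k\geq1$.

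Fix a linear layout $(v_1,\ldots,v_n)$ of width at most $k$. We build the rooted comb decomposition $\mathcal D$ whose internal nodes are $t_n,\ldots,t_2$ with shores $X_{t_i}=\{v_1,\ldots,v_i\}$. Each internal node $t_i$ has two children: the node $t_{i-1}$ with shore $\{v_1,\ldots,v_{i-1}\}$ (a leaf when $i=2$), and the leaf with shore $\{v_i\}$. The root is $t_n$ with shore $V(G)$.

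Next we bound the cut ranks. The root has rank $0$, and every prefix node has rank at most $k$. A singleton leaf $\{v\}$ has rank at most $1$, since its matrix has one row. Hence $w(\mathcal D)\leq k$, because $k\geq1$ covers the singletons. The fork load at $t_i$ is
\[
  \rho_G(\{v_1,\ldots,v_i\})+\rho_G(\{v_1,\ldots,v_{i-1}\})+\rho_G(\{v_i\})\leq k+k+1=2k+1.
\]
The root fork is even smaller, since its parent term is zero. If $n=1$ there is no internal node, so $\ell(\mathcal D)=0$. Therefore
\[
  d(\mathcal D)=\max\{\ell(\mathcal D)+1,2w(\mathcal D)+2\}\leq\max\{2k+2,2k+2\}=2k+2.
\]

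Theorem~\ref{thm:main} then gives identification by $(2k+2)$-dimensional Weisfeiler-Leman. Proposition~\ref{prop:gn-wl-game-logic} and the finite-conjunction argument from the proof of Theorem~\ref{thm:main} give the equivalent forms: an identifying $\mathsf C^{2k+3}$ sentence and a Spoiler win with $2k+3$ pebble pairs. Identification also persists in higher dimensions, because a smaller $d(\mathcal D)$ only strengthens the conclusion.

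The only point needing care is that the singleton cut rank is counted as $1$ rather than $k$. This is what improves the generic bound $3k$ to $2k+1$. It also means the term $2w+2$, not the fork load, determines the final bound, so the case $k\geq1$ has to be used explicitly.
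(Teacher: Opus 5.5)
Your proof is correct and follows the paper's argument: both build the comb decomposition from a width-$k$ layout, bound the two non-singleton cut ranks at each fork by $k$ and the singleton leaf by $1$, and conclude $w(\mathcal D)\leq k$, $\ell(\mathcal D)\leq 2k+1$, hence $d(\mathcal D)\leq 2k+2$ via Theorem~\ref{thm:main}, with the edgeless case $k=0$ treated separately. The only cosmetic difference is that the paper uses suffix shores $\{v_i,\ldots,v_n\}$ and invokes cut-rank symmetry under complementation, whereas your prefix shores are directly displayed cuts of the layout, so you do not need that symmetry step.
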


\begin{proof}
If $k=0$, every edge would cross a prefix of every layout, so the graph is
edgeless. In fact, one-dimensional colour refinement identifies it. Assume
$k\geq1$.
Let $(v_1,\ldots,v_n)$ be a linear layout of width at most $k$.
If $n\leq1$, the conclusion is immediate. Assume that $n\geq2$ and form the
rooted comb whose internal shores are
\[
  \{v_i,\ldots,v_n\}
  \qquad(1\leq i<n),
\]
and whose two children at such a shore are
$\{v_i\}$ and $\{v_{i+1},\ldots,v_n\}$, with all singletons as leaves.
A cut and its complement have the same rank because their cut matrices are
transposes. Hence every internal fork has two cut ranks at most $k$ and one
singleton cut rank at most one. Thus
\[
  w(\mathcal D)\leq k,
  \qquad
  \ell(\mathcal D)\leq2k+1.
\]
Theorem~\ref{thm:main} gives
\[
  d(\mathcal D)\leq\max\{2k+2,2k+2\}=2k+2.
\]
\end{proof}

For every finite nonempty uncoloured graph of positive rank width, the two
corollaries give
\begin{equation}
  \dim_{\mathrm{WL}}(G)
  \leq
  \min\{3\rw(G)+1,\,2\lrw(G)+2\}.
  \label{eq:rank-linear-rank-combined-bound}
\end{equation}
Graphs of rank width zero are edgeless and are identified by
one-dimensional colour refinement.

\section{Optimized profiles, exact separation, and atomic decomposition}
\label{sec:optimized-profiles}

For a finite nonempty graph $G$, define
\[
  \ell^\star(G)=\min_{\mathcal D}\ell(\mathcal D),
  \qquad
  d^\star(G)=\min_{\mathcal D}d(\mathcal D),
\]
where both minima range over all rooted rank decompositions of $G$.
We call $(\ell^\star(G),d^\star(G))$ the optimized profile pair of $G$.
The second coordinate $d^\star(G)$ is its optimized dimension bound.

\begin{proposition}[Fork-load sandwich]
\label{prop:fork-load-sandwich}
Every finite nonempty graph $G$ satisfies
\[
  2\rw(G)\leq\ell^\star(G)\leq3\rw(G)
\]
and
\[
  \max\{\ell^\star(G)+1,2\rw(G)+2\}\leq d^\star(G)
  \leq\ell^\star(G)+2.
\]
Moreover, $\dim_{\mathrm{WL}}(G)\leq d^\star(G)$.
\end{proposition}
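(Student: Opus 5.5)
The plan is to derive everything from one inequality between the fork load and the maximum cut rank of a single decomposition, namely
\[
  \ell(\mathcal D)\geq 2w(\mathcal D)\qquad\text{for every rooted rank decomposition }\mathcal D\text{ of }G.
\]
This inequality is the only step that needs an argument.

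\textbf{The key inequality.} I would prove it from subadditivity of the cut rank. For disjoint $A,B$ one has $\rho_G(A\cup B)\leq\rho_G(A)+\rho_G(B)$. To see this, split the rows of $M_G(A\cup B,\overline{A\cup B})$ into the $A$-rows and the $B$-rows. Each block is a column restriction of $M_G(A,\overline A)$ or of $M_G(B,\overline B)$, so its rank is at most $\rho_G(A)$ or $\rho_G(B)$ respectively.

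Now let $t$ be an internal node with children $t_1,t_2$, so $X_t=X_{t_1}\mathbin{\dot\cup}X_{t_2}$. A cut and its complement have equal rank, as noted in the proof of Corollary~\ref{cor:linear-rank-width-bound}. Hence
\[
  r_{\mathcal D}(t_1)=\rho_G(\overline{X_t}\cup X_{t_2})\leq r_{\mathcal D}(t)+r_{\mathcal D}(t_2),
\]
and therefore $\ell_{\mathcal D}(t)\geq 2r_{\mathcal D}(t_1)$. By symmetry also $\ell_{\mathcal D}(t)\geq 2r_{\mathcal D}(t_2)$.

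If $w(\mathcal D)=0$ the inequality is trivial; this case includes a tree with no internal node. Otherwise the root has cut rank zero, so $w(\mathcal D)$ is attained at a non-root node $s$. Its parent $t$ is internal, and the previous display gives $\ell(\mathcal D)\geq\ell_{\mathcal D}(t)\geq 2r_{\mathcal D}(s)=2w(\mathcal D)$.

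\textbf{The fork-load sandwich.} Since $w(\mathcal D)\geq\rw(G)$ for every $\mathcal D$, the key inequality gives $\ell(\mathcal D)\geq2\rw(G)$ for all $\mathcal D$. Minimizing over $\mathcal D$ yields $\ell^\star(G)\geq2\rw(G)$. For the upper bound, take a decomposition of width $\rw(G)$ and use the bound $\ell(\mathcal D)\leq3w(\mathcal D)$ recorded after \eqref{eq:decomposition-fork-load}.

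\textbf{The bounds on $d^\star(G)$.} For every $\mathcal D$ we have $d(\mathcal D)\geq\ell(\mathcal D)+1\geq\ell^\star(G)+1$ and $d(\mathcal D)\geq2w(\mathcal D)+2\geq2\rw(G)+2$. Minimizing over $\mathcal D$ gives the lower bound. For the upper bound, choose $\mathcal D$ with $\ell(\mathcal D)=\ell^\star(G)$. The key inequality gives $2w(\mathcal D)+2\leq\ell(\mathcal D)+2$, so $d(\mathcal D)\leq\ell^\star(G)+2$.

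\textbf{The Weisfeiler--Leman bound.} Apply Theorem~\ref{thm:main} to a decomposition attaining $d^\star(G)$. Since $d(\mathcal D)\geq1$ always holds, it is an admissible dimension in the definition of $\dim_{\mathrm{WL}}$, and so $\dim_{\mathrm{WL}}(G)\leq d^\star(G)$.

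The only real obstacle is the inequality $\ell(\mathcal D)\geq2w(\mathcal D)$, and there the one point needing care is that the maximum cut rank is not attained at the root. That is guaranteed because the root cut is $(V(G),\varnothing)$, which has rank zero.
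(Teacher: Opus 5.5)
Your proof is correct and follows essentially the paper's route. Both arguments derive $2w(\mathcal D)\leq\ell(\mathcal D)$ from a cut-rank triangle inequality applied at the parent fork of a non-root node of maximum cut rank, and then read off the sandwich, the $d^\star$ bounds, and the Weisfeiler--Leman bound exactly as you do. The only difference is cosmetic. You phrase subadditivity via row splitting plus complement symmetry, which lets you treat the root fork uniformly with $\overline{X_t}=\varnothing$. The paper instead states the inequality for a tripartition of $V(G)$ and handles the root fork separately, using the equality of its two child cut ranks.
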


\begin{proof}
If $A,B,C$ partition $V(G)$, then
\begin{equation}
  \rho_G(A)\leq\rho_G(B)+\rho_G(C).
  \label{eq:cut-rank-triangle}
\end{equation}
Indeed, split the columns of the cut matrix for $A$ into $B$ and $C$.
The two resulting ranks are at most $\rho_G(B)$ and $\rho_G(C)$,
respectively.

Fix a decomposition $\mathcal D$. The upper bound
$\ell(\mathcal D)\leq3w(\mathcal D)$ is immediate. If
$w(\mathcal D)>0$, choose a nonroot node whose cut has rank
$w(\mathcal D)$ and consider its parent fork. If the parent is the root,
the two child cuts have equal rank, so its load is at least
$2w(\mathcal D)$. Otherwise, apply \eqref{eq:cut-rank-triangle} to the
three shores at that fork. The other two incident ranks have sum at least
$w(\mathcal D)$, so again the load is at least $2w(\mathcal D)$.
Thus
\[
  2w(\mathcal D)\leq\ell(\mathcal D)\leq3w(\mathcal D).
\]
Minimizing the left inequality over all decompositions and applying the
right inequality to a width-optimal decomposition proves the first display.

For every $\mathcal D$, the definition of $d(\mathcal D)$ gives
\[
  d(\mathcal D)\geq\ell(\mathcal D)+1,
  \qquad
  d(\mathcal D)\geq2w(\mathcal D)+2.
\]
This proves the lower bound on $d^\star(G)$. If $\mathcal D$ minimizes
fork load, then $2w(\mathcal D)\leq\ell(\mathcal D)$ and hence
$d(\mathcal D)\leq\ell^\star(G)+2$. The identification assertion follows by applying
Theorem~\ref{thm:main} to a decomposition minimizing $d(\mathcal D)$.
\end{proof}

\begin{corollary}[Endpoint of the optimized bound]
\label{cor:optimized-bound-endpoint}
Every finite nonempty graph $G$ satisfies
\[
  d^\star(G)\in\{\ell^\star(G)+1,\ell^\star(G)+2\}.
\]
If $\ell^\star(G)$ is odd, then $d^\star(G)=\ell^\star(G)+1$. If
$\ell^\star(G)=2\rw(G)$, then
$d^\star(G)=\ell^\star(G)+2$.
\end{corollary}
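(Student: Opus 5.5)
The plan is to derive all three assertions directly from Proposition~\ref{prop:fork-load-sandwich}, with one parity observation supplying the middle claim. The first claim is immediate. The proposition gives
\[
  \ell^\star(G)+1\leq d^\star(G)\leq\ell^\star(G)+2,
\]
and $d^\star(G)$ is an integer, so it takes one of the two stated values.

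For the odd case, the aim is to exhibit a decomposition $\mathcal D$ with $d(\mathcal D)=\ell^\star(G)+1$. Take $\mathcal D$ minimizing fork load, so $\ell(\mathcal D)=\ell^\star(G)$. The proof of Proposition~\ref{prop:fork-load-sandwich} shows that every decomposition satisfies $2w(\mathcal D)\leq\ell(\mathcal D)$. If $\ell(\mathcal D)$ is odd, this inequality is strict, so $2w(\mathcal D)\leq\ell(\mathcal D)-1$. Hence
\[
  2w(\mathcal D)+2\leq\ell(\mathcal D)+1,
\]
which gives $d(\mathcal D)=\ell(\mathcal D)+1$. Combining this with the lower bound $d^\star(G)\geq\ell^\star(G)+1$ yields equality.

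For the case $\ell^\star(G)=2\rw(G)$, the proposition's lower bound gives $d^\star(G)\geq2\rw(G)+2=\ell^\star(G)+2$. The upper bound $d^\star(G)\leq\ell^\star(G)+2$ then forces equality.

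The only step needing care is the parity argument. It relies on the per-decomposition inequality $2w(\mathcal D)\leq\ell(\mathcal D)$, not merely on the optimized version $2\rw(G)\leq\ell^\star(G)$. It also needs the degenerate case where $\mathcal D$ has no internal node. There $\ell=0$, which is even, so the odd case cannot arise, and $w=0$, so the inequality holds trivially. Both facts are established in the proof of Proposition~\ref{prop:fork-load-sandwich}, so no new argument is required.
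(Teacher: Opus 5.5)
Your proof is correct and follows essentially the same route as the paper: the two-value claim comes from the sandwich in Proposition~\ref{prop:fork-load-sandwich}. The odd case uses a fork-load-minimizing decomposition, where parity sharpens $2w(\mathcal D)\leq\ell(\mathcal D)$ to $2w(\mathcal D)\leq\ell(\mathcal D)-1$, and the case $\ell^\star(G)=2\rw(G)$ comes from matching the lower bound $2\rw(G)+2$ against the upper bound $\ell^\star(G)+2$. Your explicit remarks on the per-decomposition inequality and the no-internal-node case just spell out details the paper leaves implicit.
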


\begin{proof}
The two consecutive possibilities follow from
Proposition~\ref{prop:fork-load-sandwich}. If $\mathcal D$ minimizes fork
load and $\ell^\star(G)$ is odd, then
$2w(\mathcal D)\leq\ell^\star(G)-1$. Hence
$d(\mathcal D)\leq\ell^\star(G)+1$, which forces the first value. If
$\ell^\star(G)=2\rw(G)$, the lower and upper bounds in the proposition both
give $d^\star(G)=\ell^\star(G)+2$.
\end{proof}

We next record the stability needed to separate the optimized profile pair
from linear rank width.

\begin{lemma}[Restriction and pendant stability]
\label{lem:profile-pendant-stability}
If $H$ is a nonempty induced subgraph of $G$, then
\[
  \ell^\star(H)\leq\ell^\star(G),
  \qquad
  d^\star(H)\leq d^\star(G).
\]
Suppose that $G$ is connected and has at least three vertices. If $G^+$ is
obtained from $G$ by adding one pendant vertex, then
\[
  \rw(G^+)=\rw(G),
  \qquad
  \ell^\star(G^+)=\ell^\star(G),
  \qquad
  d^\star(G^+)=d^\star(G).
\]
\end{lemma}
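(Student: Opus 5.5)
The plan has two parts. The restriction inequalities come from pulling a decomposition of $G$ back to $H$, with ranks controlled by submatrices. For the pendant statement, the reverse inequalities come from the restriction part and the forward ones from splitting one leaf of an optimal decomposition of $G$. Throughout I will use only the definitions of $\rho_G$, $w(\mathcal D)$, $\ell(\mathcal D)$ and $d(\mathcal D)$. I will also use the fact, already used in the proof of Corollary~\ref{cor:linear-rank-width-bound}, that a cut and its complement have equal rank.

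\emph{Restriction.} Fix a rooted rank decomposition $\mathcal D=(T,\gamma)$ of $G$. Delete every leaf whose vertex is outside $V(H)$, repeatedly delete internal nodes whose shore no longer meets $V(H)$, and suppress every node with a single remaining child by identifying it with that child. This gives a rooted decomposition $\mathcal D_H$ of $H$ with shores $X\cap V(H)$.

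The key observation is as follows. Every node $s$ of $\mathcal D_H$ corresponds to a node $t$ of $T$ with $X_t\cap V(H)=X_s^H$. The matrix $M_H(X_t\cap V(H),V(H)\setminus X_t)$ is a submatrix of $M_G(X_t,\overline{X_t})$, so $\rho_H(X^H_s)\le\rho_G(X_t)$. Taking $t$ to be the topmost such node suffices. In particular, a fork of $\mathcal D_H$ comes from a fork $t$ of $T$ both of whose children meet $V(H)$. Its two child shores in $\mathcal D_H$ equal $X_{t_i}\cap V(H)$, so each of the three incident ranks is bounded by the corresponding rank at $t$.

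Hence $w(\mathcal D_H)\le w(\mathcal D)$ and $\ell(\mathcal D_H)\le\ell(\mathcal D)$, so $d(\mathcal D_H)\le d(\mathcal D)$. If the new root is a former internal node, its rank in $H$ is zero, which only helps. The degenerate case $|V(H)|=1$ gives $\ell=0$ and $d=2$, and $d(\mathcal D)\ge2$ always holds. Minimizing over $\mathcal D$ yields both inequalities, and the same argument reproves $\rw(H)\le\rw(G)$.

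\emph{Pendant vertex.} Let $u$ be the new vertex, adjacent only to $x$. Since $G$ is an induced subgraph of $G^+$, the first part gives $\rw(G)\le\rw(G^+)$, $\ell^\star(G)\le\ell^\star(G^+)$ and $d^\star(G)\le d^\star(G^+)$.

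For the reverse inequalities, take a decomposition $\mathcal D$ of $G$ that is optimal for the relevant parameter; use separate optimal decompositions for $\rw$, $\ell^\star$ and $d^\star$. Replace the leaf $\{x\}$ by an internal node with shore $\{x,u\}$ and leaf children $\{x\}$ and $\{u\}$, and add $u$ to every shore containing $x$.

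Every old cut keeps its rank. If $x\in X$, then the row of $u$ in $M_{G^+}(X\cup\{u\},\cdot)$ is zero, because its only neighbour $x$ is inside. If $x\notin X$, then the new column indexed by $u$ is zero. Every other entry is as in $G$.

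The new fork has ranks $\rho_{G^+}(\{x,u\})=\rho_G(\{x\})$ and $\rho_{G^+}(\{x\})=\rho_{G^+}(\{u\})=1$. Connectivity of $G$ with at least two vertices gives $\rho_G(\{x\})=1$, so the new load is $3$ and the new ranks are $1\le w(\mathcal D)$.

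The main obstacle is showing that this load of $3$ does not exceed $\ell(\mathcal D)$. I would argue it as follows. Every decomposition of $G$ has an internal node whose two children are leaves, and since $|V(G)|\ge3$ this node is not the root. Its shore $Y$ is a nonempty proper subset of a connected graph, so $\rho_G(Y)\ge1$. The two singleton cuts are also at least $1$, because $G$ has no isolated vertex. Hence $\ell(\mathcal D)\ge3$ for every $\mathcal D$.

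It follows that $w$, $\ell$ and $d$ are unchanged when passing to the new decomposition of $G^+$. This gives $\rw(G^+)\le\rw(G)$, $\ell^\star(G^+)\le\ell^\star(G)$ and $d^\star(G^+)\le d^\star(G)$, and combined with the first part, the stated equalities.
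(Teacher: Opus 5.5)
Your proof is correct and follows essentially the same route as the paper. You restrict a decomposition of $G$ to $V(H)$ so that every cut incident with a surviving fork is bounded by a submatrix of the corresponding old cut. For the pendant vertex you replace the neighbour's leaf by a cherry, and you use connectivity and $|V(G)|\geq3$ to get $w(\mathcal D)\geq1$ and $\ell(\mathcal D)\geq3$. The one difference is cosmetic: the paper restricts on the unrooted cubic tree with a dummy leaf standing for the root, while you prune and suppress directly in the rooted tree and note that a promoted root has rank zero.
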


\begin{proof}
To restrict a decomposition, attach a dummy leaf at its root and regard the
result as an unrooted cubic tree, where the dummy leaf represents the empty
shore of rank zero. Take the minimal subtree spanning the leaves of $H$ and
the dummy leaf, then suppress every degree-two vertex. Every branch point of
the restricted tree was a branch point of the original tree. On each of its
three branches, the new cut matrix is a submatrix of the corresponding old
cut matrix. Thus neither its width nor its fork load increases. Removing the
dummy leaf and restoring the root proves the two monotonicity assertions.

Now let $x$ be the new pendant vertex, adjacent only to $v$. Given a
decomposition $\mathcal D$ of $G$, replace the leaf for $v$ by a cherry
whose leaves are $v$ and $x$. Every old cut rank is unchanged. The three
cut ranks at the new fork are at most one. The extended decomposition
$\mathcal D^+$ therefore satisfies
\begin{equation}
  w(\mathcal D^+)\leq\max\{w(\mathcal D),1\},
  \qquad
  \ell(\mathcal D^+)\leq\max\{\ell(\mathcal D),3\}.
  \label{eq:pendant-profile-extension}
\end{equation}
Every nonempty proper cut of a connected graph has positive cut rank.
Since $G$ has at least three vertices, every decomposition has a nonroot
fork with three nonempty shores. Hence $w(\mathcal D)\geq1$ and
$\ell(\mathcal D)\geq3$. Equation~\eqref{eq:pendant-profile-extension}
shows that the three optimized quantities do not increase. Their reverse
inequalities follow by restricting back to the induced copy of $G$.
\end{proof}

\begin{lemma}[Endpoint one-sum formula]
\label{lem:endpoint-one-sum}
Let $A$ and $B$ have at least two vertices. Choose optimal linear layouts
of $A$ and $B$, with last vertex $u$ and first vertex $v$, respectively.
If $H$ is obtained from their disjoint union by identifying $u$ and $v$,
then
\[
  \lrw(H)=\max\{\lrw(A),\lrw(B)\}.
\]
\end{lemma}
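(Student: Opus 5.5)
The plan is to prove the two inequalities separately.

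\emph{Upper bound.} Write $k=\max\{\lrw(A),\lrw(B)\}$ and concatenate the layouts. Let $(a_1,\ldots,a_p)$ be the chosen optimal layout of $A$ with $a_p=u$, and $(b_1,\ldots,b_q)$ the one of $B$ with $b_1=v$. In $H$ the identified vertex is $x=u=v$. Take the layout
\[
  (a_1,\ldots,a_{p-1},x,b_2,\ldots,b_q).
\]
The two sides of $H$ interact only through $x$: no edge joins $V(A)\setminus\{u\}$ to $V(B)\setminus\{v\}$.

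For a prefix $L=\{a_1,\ldots,a_i\}$ with $i<p$, the complement contains $x$ and all of $B-v$. Vertices of $L$ have no neighbours in $B-v$, so those columns of the cut matrix are zero. What remains is exactly the cut matrix of $\{a_1,\ldots,a_i\}$ in $A$, so the rank is at most $\lrw(A)$.

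For a prefix containing $x$, namely $L=V(A)\cup\{b_2,\ldots,b_j\}$, the complement is $\{b_{j+1},\ldots,b_q\}$. The rows indexed by $V(A)\setminus\{x\}$ are zero. The remaining rows form the cut matrix in $B$ of $\{b_1,\ldots,b_j\}$, because $x$ plays the role of $v=b_1$. So the rank is at most $\lrw(B)$. Hence $\lrw(H)\leq k$.

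\emph{Lower bound.} $A$ and $B$ are induced subgraphs of $H$. Linear rank width is monotone under induced subgraphs: restrict a layout of $H$ to $V(A)$. Each prefix cut matrix of $A$ is then a submatrix of a prefix cut matrix of $H$, namely the prefix ending at the last $A$-vertex of that prefix, restricted to rows and columns in $V(A)$. The degenerate case of an empty prefix has rank zero. This gives $\lrw(H)\geq\max\{\lrw(A),\lrw(B)\}$.

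\emph{Main obstacle.} The only real care is in the upper bound: checking that the cut where $x$ switches sides contributes no extra rank. The cut just before $x$ is a prefix of $A$ minus $u$, whose complement contains $u$ together with $B-v$. The cut just after $x$ is $V(A)$ against $B-v$, which is the $B$-cut of $\{v\}$. Both reduce to matrices of the original layouts, with the zero blocks coming from the absence of edges between $A-u$ and $B-v$.

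The hypothesis that $A$ and $B$ have at least two vertices only ensures that these layouts are nontrivial. It does not affect the argument.
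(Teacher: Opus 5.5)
Your proof is correct and follows the paper's argument. You concatenate the two optimal layouts at the identified vertex, note that the columns indexed by $B\setminus\{v\}$ vanish before $x$ and the rows indexed by $A\setminus\{u\}$ vanish from $x$ on, and obtain the reverse inequality from induced-subgraph monotonicity of linear rank width; your choice of the $H$-prefix ending at the last $A$-vertex just spells out the paper's one-line submatrix remark.
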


\begin{proof}
Write the two layouts as
$(a_1,\ldots,a_{s-1},u)$ and $(v,b_2,\ldots,b_t)$, and call the
identified vertex $x$. Use
\[
  (a_1,\ldots,a_{s-1},x,b_2,\ldots,b_t).
\]
Before $x$, every column indexed by $B\setminus\{v\}$ is zero, so the cut
rank is the corresponding cut rank in $A$. At and after $x$, every row
indexed by $A\setminus\{u\}$ is zero, so the cut rank is the corresponding
cut rank in $B$. This proves the upper bound. Both $A$ and $B$ occur as
induced subgraphs of $H$. Deleting vertices from a layout while retaining
their relative order replaces each displayed cut matrix by a submatrix of
an old cut matrix. Thus linear rank width is induced-subgraph monotone, which
gives the reverse inequality.
\end{proof}

For $q\geq1$, define a tree $R_q$ recursively. Put $R_1=K_2$. For
$q\geq2$, take three disjoint copies of $R_{q-1}$, add one new vertex,
and join it to one chosen leaf in each copy.

Recall that a path decomposition is a sequence of vertex bags whose union is
the vertex set. Every edge has both endpoints in one bag, and the bags
containing any fixed vertex form an interval. Its width is the largest bag
size minus one, and
$\operatorname{pw}(G)$ is the minimum such width. We use the recursive
characterization of path width on trees and its equality with linear rank
width on forests from Adler and Kant\'{e} \cite{AdlerKante2015}.

\begin{lemma}[Minimum trees of prescribed linear rank width]
\label{lem:minimum-linear-rank-width-tree}
For every $q\geq1$,
\[
  \operatorname{pw}(R_q)=\lrw(R_q)=q,
  \qquad
  |V(R_q)|=\frac{5\cdot3^{q-1}-1}{2}.
\]
Moreover, every tree of linear rank width at least $q$ has at least
$|V(R_q)|$ vertices.
\end{lemma}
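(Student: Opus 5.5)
The plan is to derive everything from the recursive characterization of path width on trees, together with the equality $\operatorname{pw}(F)=\lrw(F)$ for forests (Adler and Kant\'{e} \cite{AdlerKante2015}). The characterization I will use is the following. For a tree $T$ and $k\geq1$, $\operatorname{pw}(T)\geq k+1$ holds exactly when some vertex $x$ has at least three branches of $T-x$ with path width at least $k$. Moreover, $\operatorname{pw}(T)\geq1$ holds exactly when $T$ has an edge.

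The vertex count is a direct recursion. We have $|V(R_1)|=2$ and $|V(R_q)|=3|V(R_{q-1})|+1$. Solving gives $N_q=(5\cdot3^{q-1}-1)/2$; for the check, $N_1=2$ and $3N_{q-1}+1=(5\cdot3^{q-1}-3)/2+1$.

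Next I show $\operatorname{pw}(R_q)=q$ by induction on $q$. The base case $R_1=K_2$ has path width one. For the lower bound, let $x$ be the new vertex of $R_q$. The three branches at $x$ are the copies of $R_{q-1}$, each of path width $q-1$, so the characterization gives $\operatorname{pw}(R_q)\geq q$.

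For the upper bound, I use the converse direction of the characterization: if no vertex has three branches of path width at least $q$, then $\operatorname{pw}(R_q)\leq q$. At $x$, every branch has width $q-1$, so $x$ does not witness width $q+1$. Now take a vertex $y$ inside one copy. All branches of $R_q-y$ except one are subtrees of that copy, so they have width at most $q-1$. Hence $y$ has at most one branch of width at least $q$.

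Alternatively, the upper bound can be shown by an explicit construction. Take a path decomposition of width $q-1$ for each of the three copies, concatenate them, and add $x$ to every bag. Then $x$ covers the edges to the three chosen leaves, and the width is $q$. This construction is cleaner and I will use it. Applying \cite{AdlerKante2015} then gives $\lrw(R_q)=q$.

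For minimality, I prove by induction on $q$ that every tree $T$ with $\operatorname{pw}(T)\geq q$ has at least $N_q$ vertices. For $q=1$, such a tree has an edge, so it has at least $2$ vertices. For $q\geq2$, the characterization gives a vertex $x$ with three branches $T_1,T_2,T_3$ of $T-x$, each of path width at least $q-1$. By induction each branch has at least $N_{q-1}$ vertices. These branches are disjoint and avoid $x$, so $|V(T)|\geq3N_{q-1}+1=N_q$. Since linear rank width equals path width on trees, the same bound holds for trees of linear rank width at least $q$.

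The main obstacle is stating the recursive characterization in the exact form needed. In particular, I need the direction ``$\operatorname{pw}(T)\geq k+1$ implies some vertex has three branches of width at least $k$'', which is the classical Ellis--Sudborough--Turner / Scheffler lemma. I will cite it from \cite{AdlerKante2015} rather than reprove it. If only the linear rank width version is available there, the same argument runs verbatim with $\lrw$ in place of $\operatorname{pw}$.
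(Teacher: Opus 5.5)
Your proposal is correct and is essentially the paper's proof. The same three ingredients appear: concatenated path decompositions with the new vertex added to every bag give $\operatorname{pw}(R_q)\leq q$; the three-branch characterization of tree path width, applied with $k=q-1\geq1$, gives both the reverse inequality and the minimality bound; and the equality of path width and linear rank width on forests transfers everything to $\lrw$. The only cosmetic difference is that you prove minimality by direct induction on $q$, whereas the paper phrases that step through a minimum-order tree whose three heavy branches must have path width exactly $q-1$; that extra device is not needed.
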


\begin{proof}
The assertion is immediate for $q=1$. For $q\geq2$, add the new central
vertex to every bag in width-$(q-1)$ path decompositions of the three
copies, then concatenate the decompositions. This gives
$\operatorname{pw}(R_q)\leq q$. Removing the central vertex leaves three
components of path width $q-1$. The recursive characterization of tree
path width, applied with $k=q-1$, gives the reverse inequality
\cite[Lemma~14]{AdlerKante2015}. Adler and Kant\'{e} prove that path width
and linear rank width agree on forests
\cite[Theorem~2]{AdlerKante2015}.

Writing $m_q=|V(R_q)|$, the recurrence $m_1=2$ and
$m_q=3m_{q-1}+1$ gives the displayed formula. For minimality, take a
minimum-order tree $T$ of path width at least $q$. If $q\geq2$, minimality
ensures that every component of $T-x$ has path width at most $q-1$ for
every vertex $x$. Applying the same recursive characterization with
$k=q-1$ gives a vertex for which at least three components have path width
at least $q-1$. Those components therefore have path width exactly $q-1$.
Induction yields
$|V(T)|\geq1+3m_{q-1}=m_q$.
\end{proof}

\begin{theorem}[Exact fork-load separation]
\label{thm:fork-load-separation}
Let $G$ be a connected graph on at least three vertices. For every integer
$q\geq1$, there is a connected graph $G_q$ containing $G$ as an induced
subgraph with the same rank width and optimized profile pair, namely
\[
  \rw(G_q)=\rw(G),
  \qquad
  \ell^\star(G_q)=\ell^\star(G),
  \qquad
  d^\star(G_q)=d^\star(G),
\]
and
\[
  \lrw(G_q)=\max\{\lrw(G),q\}.
\]
\end{theorem}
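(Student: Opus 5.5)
The plan is to build $G_q$ by attaching a tree of large linear rank width to $G$ through a chain of pendant additions, so that Lemma~\ref{lem:profile-pendant-stability} preserves the rank width and the optimized profile pair at every step. Lemma~\ref{lem:endpoint-one-sum} then computes the linear rank width exactly. Every graph obtained by repeatedly adding pendant vertices to $G$ is again connected with at least three vertices. Hence each single pendant addition keeps $\rw$, $\ell^\star$ and $d^\star$ unchanged, and by induction so does any tree grown vertex by vertex from a vertex of $G$.

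Fix an optimal linear layout of $G$ and let $u$ be its last vertex. Take the tree $R_q$ of Lemma~\ref{lem:minimum-linear-rank-width-tree}, with $\lrw(R_q)=q$, and choose an optimal linear layout of $R_q$ with first vertex $v$. Form $G_q$ from $G\mathbin{\dot\cup}R_q$ by identifying $u$ with $v$. Both pieces have at least two vertices, since $|V(R_q)|\geq2$ and $|V(G)|\geq3$. Lemma~\ref{lem:endpoint-one-sum} therefore gives $\lrw(G_q)=\max\{\lrw(G),q\}$. Since $R_q$ is a tree, $G_q$ can be grown from $G$ by adding the vertices of $R_q-v$ one at a time in breadth-first order from $v$. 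Each added vertex has exactly one neighbour in the current graph, so each step is a pendant addition. The first paragraph then yields the required equalities for $\rw$, $\ell^\star$ and $d^\star$. Moreover $G$ is an induced subgraph of $G_q$, and $G_q$ is connected.

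The main point to check is that the one-sum formula really applies with the chosen endpoints: we need an optimal layout of $R_q$ that \emph{starts} at the vertex to be glued. This is not a problem, because Lemma~\ref{lem:endpoint-one-sum} lets us choose the layouts first and glue at whatever endpoints they have. Reversing a layout preserves its width, since each cut's complement has the same cut rank. So the gluing vertex of $R_q$ is simply the first vertex of any optimal layout of $R_q$, and the gluing vertex of $G$ is the last vertex of any optimal layout of $G$. The identification is thus an arbitrary but legitimate construction. The only remaining care is the pendant lemma's hypothesis that the graph be connected with at least three vertices, and this holds throughout because we start from $G$ and only add pendants.
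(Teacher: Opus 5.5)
Your proposal is correct and follows essentially the same route as the paper's proof. You glue $R_q$, at the first vertex of an optimal layout, to the last vertex of an optimal layout of $G$, and read off $\lrw(G_q)=\max\{\lrw(G),q\}$ from the endpoint one-sum lemma together with $\lrw(R_q)=q$; you then grow $R_q$ from the shared vertex in parent-before-child order, so that repeated pendant stability preserves $\rw$, $\ell^\star$ and $d^\star$.
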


\begin{proof}
Choose an optimal layout of $G$ and let $u$ be its last vertex. Choose an
optimal layout of $R_q$ and let $z$ be its first vertex. Form $G_q$ by
identifying $u$ and $z$. Lemmas~\ref{lem:endpoint-one-sum} and
\ref{lem:minimum-linear-rank-width-tree} give the stated linear rank
width. Root $R_q$ at the shared vertex and add the remaining vertices in
any parent-before-child order. Every new vertex is pendant when it is
added, so repeated application of
Lemma~\ref{lem:profile-pendant-stability} gives the three equalities.
\end{proof}

\begin{remark}[Order of the separating extension]
The construction in Theorem~\ref{thm:fork-load-separation} gives
\[
  |V(G_q)|=|V(G)|+\frac{5\cdot3^{q-1}-3}{2}.
\]
Its attached tree has minimum order among all trees of linear rank width at
least $q$, by Lemma~\ref{lem:minimum-linear-rank-width-tree}.
\end{remark}

\begin{corollary}[Exact separation at every rank]
\label{cor:fork-load-every-rank}
For all integers $r\geq1$ and $q\geq r+1$, there is a connected graph
$G_{r,q}$ such that
\[
  \rw(G_{r,q})=r,
  \qquad
  \lrw(G_{r,q})=q,
  \qquad
  \ell^\star(G_{r,q})\leq\min\{3r,2r+3\},
\]
and
\[
  2r+2\leq d^\star(G_{r,q})
  \leq\min\{3r+1,2r+4\}.
\]
\end{corollary}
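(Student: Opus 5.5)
The plan is to apply Theorem~\ref{thm:fork-load-separation} to a square grid as base graph. Let $\Gamma_n$ denote the $n\times n$ grid. The facts I need are: rank width of grids is known to be $n-1$ (Jel\'{\i}nkov\'{a}), so for $r\geq1$ the grid $\Gamma_{r+1}$ has rank width exactly $r$. For $r=1$, $\Gamma_2$ is a 4-cycle, which has rank width $1$; there I would use a path $P_3$ or a small tree instead. Then $\Gamma_{r+1}$ is connected, has at least three vertices, and $\lrw(\Gamma_{r+1})\leq r+1$. If I prefer not to cite the exact grid rank width, I would take any connected graph $B_r$ of rank width exactly $r$ and linear rank width at most $r+1$ (for instance a suitable grid or a tree-like construction) and phrase the corollary's base graph accordingly.

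With such a base $B_r$, apply Theorem~\ref{thm:fork-load-separation} with the given $q\geq r+1$. This yields a connected $G_{r,q}$ with $\rw=r$, with the same optimized profile pair as $B_r$, and with $\lrw(G_{r,q})=\max\{\lrw(B_r),q\}=q$. The equality $\lrw=q$ uses $\lrw(B_r)\leq r+1\leq q$.

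For the profile bounds, Proposition~\ref{prop:fork-load-sandwich} gives $\ell^\star\leq3r$ and $d^\star\geq2\rw+2=2r+2$. The bound $d^\star\leq\ell^\star+2$ combined with $\ell^\star\leq3r$ gives $d^\star\leq3r+2$. This needs to be sharpened to $3r+1$: for a width-$r$ decomposition $\mathcal D$ we have $d(\mathcal D)=\max\{\ell+1,2w+2\}\leq\max\{3r+1,2r+2\}=3r+1$ when $r\geq1$. So $d^\star\leq3r+1$.

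The remaining bound, $\ell^\star\leq2r+3$ and hence $d^\star\leq2r+4$, is the main obstacle. Using a linear layout of $B_r$ of width at most $r+1$ and the comb from Corollary~\ref{cor:linear-rank-width-bound} only gives $\ell\leq2(r+1)+1=2r+3$ and $w\leq r+1$. That yields $d\leq\max\{2r+4,2r+4\}=2r+4$, so the comb already suffices. The ranks through the profile are preserved by Theorem~\ref{thm:fork-load-separation} and are therefore computed on the base graph alone. So the crux is confirming that the chosen base has $\rw=r$ and $\lrw\leq r+1$. For grids I would cite $\rw(\Gamma_n)=n-1$ and exhibit the row-by-row (snake) layout of $\Gamma_{r+1}$, whose prefix cuts involve at most $r+1$ boundary columns, giving rank at most $r+1$.
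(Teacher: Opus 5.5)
Your proposal is correct and follows the paper's proof essentially step for step. You apply Theorem~\ref{thm:fork-load-separation} to the grid $Q_{r+1}$, using Jel\'{i}nek's $\rw(Q_n)=n-1$ and a row-by-row layout of width at most $n$. You bound $\ell^\star$ and $d^\star$ once by a width-$r$ decomposition and once by the comb of that layout, and take $d^\star\geq2r+2$ from Proposition~\ref{prop:fork-load-sandwich}. Your $r=1$ detour is unnecessary, since $Q_2=C_4$ has rank width $1=r$ and the grid therefore works uniformly for every $r\geq1$ (though $P_3$ would also do).
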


\begin{proof}
Let $Q_n$ be the $n$ by $n$ square grid. Jel\'{i}nek proved
$\rw(Q_n)=n-1$ \cite{Jelinek2010}. Order its vertices one row at a time.
After $s$ vertices of the current row, the only possible nonzero source rows
come from the last $n-s$ vertices of the preceding row and the first $s$
vertices of the current row. There are at most $n$ such rows, so every cut
has rank at most $n$. The corresponding comb decomposition has width at
most $n$ and fork load at most $2n+1$.

Take $n=r+1$ and apply Theorem~\ref{thm:fork-load-separation} to $Q_n$.
The row layout above gives $\lrw(Q_n)\leq n=r+1$, so the linear rank width
of the resulting graph is exactly $q$. The optimized quantities are
preserved. The displayed comb gives
$\ell^\star(G_{r,q})\leq2r+3$ and
$d^\star(G_{r,q})\leq2r+4$. A width-optimal decomposition instead gives
$\ell^\star(G_{r,q})\leq3r$ and
$d^\star(G_{r,q})\leq3r+1$. The lower bound on $d^\star$ follows from
Proposition~\ref{prop:fork-load-sandwich}.
\end{proof}

\begin{remark}
Both constants in the fork-load sandwich are best possible. If $G$ is
connected, has at least three vertices, and has rank width one, then a
width-one decomposition has load three, while every decomposition has a
nonroot fork whose three cut ranks are positive. Hence
$\ell^\star(G)=3$. At the other end, the grid calculation above and
Proposition~\ref{prop:fork-load-sandwich} give
\[
  2n-2\leq\ell^\star(Q_n)\leq2n+1.
\]
Thus $\ell^\star(Q_n)/\rw(Q_n)$ tends to two.
\end{remark}

\begin{lemma}[Profiles under disjoint union]
\label{lem:profile-disjoint-union}
For nonempty vertex-disjoint graphs $G$ and $H$,
\[
  \rw(G\mathbin{\dot\cup}H)=\max\{\rw(G),\rw(H)\},
\]
\[
  \ell^\star(G\mathbin{\dot\cup}H)
  =\max\{\ell^\star(G),\ell^\star(H)\},
  \qquad
  d^\star(G\mathbin{\dot\cup}H)
  =\max\{d^\star(G),d^\star(H)\}.
\]
\end{lemma}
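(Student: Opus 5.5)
The proof shows both inequalities in each identity. The lower bounds come from restriction monotonicity. The upper bounds come from gluing decompositions of the two parts under a new root.

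\emph{Lower bounds.} $G$ and $H$ are nonempty induced subgraphs of $G\mathbin{\dot\cup}H$. The first part of Lemma~\ref{lem:profile-pendant-stability} therefore gives $\ell^\star(G),\ell^\star(H)\leq\ell^\star(G\mathbin{\dot\cup}H)$, and the same for $d^\star$. Rank width is induced-subgraph monotone by the same restriction argument: submatrices of cut matrices have no larger rank. So the three maxima are lower bounds.

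\emph{Upper bounds.} Fix rooted decompositions $\mathcal D_G$ and $\mathcal D_H$, optimal for the quantity under consideration. Form $\mathcal D$ with a new root whose two children are the roots of $\mathcal D_G$ and $\mathcal D_H$. There are no edges between $G$ and $H$, so for $Y\subseteq V(G)$ the matrix $M_{G\dot\cup H}(Y,\overline Y)$ is $M_G(Y,V(G)\setminus Y)$ padded with zero columns indexed by $V(H)$. Hence every old cut rank is unchanged, and the same holds for shores inside $H$. The new root has rank $0$ and its two children have ranks $\rho(V(G))=\rho(V(H))=0$, so its fork load is $0$. Every other fork keeps its three incident ranks. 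In particular, the old root of $\mathcal D_G$ keeps parent rank $0$ (its first summand was already zero) and its child ranks are unchanged. Therefore
\[
w(\mathcal D)=\max\{w(\mathcal D_G),w(\mathcal D_H)\},\qquad \ell(\mathcal D)=\max\{\ell(\mathcal D_G),\ell(\mathcal D_H)\}.
\]
If one side is a single vertex, its tree has no internal node and contributes $0$, consistent with the convention $\ell=0$.

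Choosing width-optimal parts gives the rank-width identity, and choosing fork-load-optimal parts gives the $\ell^\star$ identity. For $d^\star$, choose $\mathcal D_G$ and $\mathcal D_H$ minimizing $d$. Since $d(\mathcal D)=\max\{\ell(\mathcal D)+1,2w(\mathcal D)+2\}$ is a maximum of monotone functions of $(\ell,w)$, the displayed equalities give $d(\mathcal D)=\max\{d(\mathcal D_G),d(\mathcal D_H)\}$.

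The only point needing care is the cut-rank invariance under disjoint union, in particular that the old roots remain zero-rank cuts. This follows from the zero-padding observation, so I expect no real obstacle.
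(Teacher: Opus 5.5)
Your proposal is correct and follows the paper's proof essentially verbatim. The lower bounds come from induced-subgraph monotonicity and the restriction part of Lemma~\ref{lem:profile-pendant-stability}. The upper bounds come from placing the two factor decompositions under a new root whose three incident cuts have rank zero. Zero-padding leaves every old cut rank unchanged, so $w$, $\ell$, and hence $d$ combine by maxima.
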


\begin{proof}
The lower bound for rank width follows from induced-subgraph monotonicity.
The fork-load and optimized-dimension lower bounds follow from the
restriction assertion in Lemma~\ref{lem:profile-pendant-stability}.

For the reverse bounds, make the roots of decompositions $\mathcal D_G$
and $\mathcal D_H$ the two children of a new root. Every cut below an old
root has the same matrix after zero rows or columns are added. The two
component shores and the shore at the new root have cut rank zero. Hence
the combined decomposition $\mathcal D$ satisfies
\[
  w(\mathcal D)=\max\{w(\mathcal D_G),w(\mathcal D_H)\},
  \qquad
  \ell(\mathcal D)
  =\max\{\ell(\mathcal D_G),\ell(\mathcal D_H)\}.
\]
This proves the rank-width and fork-load upper bounds by choosing optimal
factor decompositions. The identity
\[
  d(\mathcal D)
  =\max\{d(\mathcal D_G),d(\mathcal D_H)\}
\]
gives the optimized-dimension upper bound when both factor decompositions are
$d$-optimal.
\end{proof}

Call a connected graph $G$ split-prime when
$\rho_G(X)\geq2$ for every $X$ with
$2\leq|X|\leq|V(G)|-2$.

Let $(A,B)$ be a nontrivial split of a connected graph $G$. Thus
$A\mathbin{\dot\cup}B=V(G)$, both shores have at least two vertices, and
$\rho_G(A)=1$. There are nonempty sets $A_0\subseteq A$ and
$B_0\subseteq B$ such that the edges between the shores are exactly
$A_0\times B_0$. Define the marker factors $G_A$ and $G_B$ by
\[
  G_A=G[A]+b,\quad N_{G_A}(b)=A_0,
  \qquad
  G_B=G[B]+a,\quad N_{G_B}(a)=B_0.
\]

The rank-width equality in the next lemma is the established maximum law
under split decomposition \cite{HlinenyEtAl2008}. We retain it in the
statement to distinguish that classical equality from the new exact maximum
identities for optimized fork load and optimized dimension bound.

\begin{lemma}[Optimized quantities under split composition]
\label{lem:split-profile-composition}
For the marker factors of every nontrivial split,
\[
  \rw(G)=\max\{\rw(G_A),\rw(G_B)\},
\]
\[
  \ell^\star(G)=\max\{\ell^\star(G_A),\ell^\star(G_B)\},
  \qquad
  d^\star(G)=\max\{d^\star(G_A),d^\star(G_B)\}.
\]
\end{lemma}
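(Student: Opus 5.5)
The plan is to prove the three identities by matching a lower bound from induced-subgraph monotonicity with an upper bound from gluing decompositions along the marker leaves. The rank-width identity is the classical one \cite{HlinenyEtAl2008}, and the gluing argument below reproves it at the same time.

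\emph{Lower bounds.} Pick any $b_0\in B_0$. Then $G[A\cup\{b_0\}]$ is isomorphic to $G_A$, with $b_0$ playing the marker $b$, because $N_G(b_0)\cap A=A_0$. Symmetrically, $G[B\cup\{a_0\}]\cong G_B$ for any $a_0\in A_0$. Both marker factors are therefore nonempty induced subgraphs of $G$. The restriction assertion of Lemma~\ref{lem:profile-pendant-stability}, together with induced-subgraph monotonicity of rank width, gives the three inequalities $\geq$.

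\emph{Upper bounds: construction.} Fix rooted decompositions $\mathcal D_A$ of $G_A$ and $\mathcal D_B$ of $G_B$. For the $d^\star$ identity take them $d$-optimal; for $\ell^\star$ take them fork-load-optimal; for $\rw$ take them width-optimal. As in the restriction proof, regard each decomposition as an unrooted cubic tree carrying a dummy leaf, of rank zero, for the root. Delete the marker leaf $b$ from $\mathcal D_A$ and the marker leaf $a$ from $\mathcal D_B$. Join their former neighbours by a new edge, and suppress the dummy leaf of $\mathcal D_B$. Finally keep the root of $\mathcal D_A$ in its original position. The result is a rooted rank decomposition $\mathcal D$ of $G$.

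\emph{Upper bounds: ranks are preserved.} The key computation is that every edge of $\mathcal D$ has the same cut rank as its ancestor edge.
\begin{itemize}
\item Consider an edge on the $\mathcal D_A$-side separating $Y\subseteq A$ from $(A\setminus Y)\cup B$. In the cut matrix $M_G(Y,\overline Y)$, each column indexed by $B_0$ equals the column of $b$ in $M_{G_A}(Y,(A\setminus Y)\cup\{b\})$. Each column indexed by $B\setminus B_0$ is zero. Hence $\rho_G(Y)=\rho_{G_A}(Y)$.
\item The same holds for the complementary shore $\overline{Y}$, since a cut and its complement have equal rank.
\item The $\mathcal D_B$-side is symmetric.
\item The new edge realises the cut $(A,B)$, of rank one. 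The deleted leaf edges $\{b\}$ and $\{a\}$ also had rank one, because $A_0$ and $B_0$ are nonempty.
\end{itemize}

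\emph{Upper bounds: conclusion.} Every internal node of $\mathcal D$ is an internal cubic vertex of $\mathcal D_A$ or $\mathcal D_B$, with the same three incident ranks. The root fork is inherited from $\mathcal D_A$ with the same rank on its two child edges. The root fork of $\mathcal D_B$ is suppressed, and dropping it can only lower the fork-load maximum. Therefore
\[
w(\mathcal D)\le\max\{w(\mathcal D_A),w(\mathcal D_B)\},\qquad
\ell(\mathcal D)\le\max\{\ell(\mathcal D_A),\ell(\mathcal D_B)\}.
\]
Since $d(\mathcal D)=\max\{\ell(\mathcal D)+1,\,2w(\mathcal D)+2\}$ is monotone in both parameters, $d(\mathcal D)\le\max\{d(\mathcal D_A),d(\mathcal D_B)\}$. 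Choosing the factor decompositions optimally for each quantity separately yields the three inequalities $\leq$.

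\emph{Expected obstacle.} The main care point is the bookkeeping of the rooted presentation. The rooted fork load counts the root fork as $0+\rho+\rho$, so it must be checked that this fork is inherited unchanged from $\mathcal D_A$. This needs care also when the root of $\mathcal D_A$ sits on the leaf edge of $b$; that edge then becomes the new $(A,B)$ edge, still of rank one. A second check is that the column-duplication argument covers the shores containing the glued region on both sides, so that no cut rank silently increases.
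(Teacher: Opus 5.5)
Your proof is correct and follows the paper's route. It uses the same induced copies $G[A\cup\{b_0\}]\cong G_A$ for the lower bounds, the same column-duplication identity $\rho_G(Y)=\rho_{G_A}(Y)$ for $Y\subseteq A$, and the same gluing of the two cubic trees along the marker leaves. The only difference is the root: the paper suppresses both old roots and places a fresh root on the new $A\mid B$ edge, so it must also check that this load-two root fork is dominated by the factors' fork loads (at least three, by connectivity and order). Keeping the root of $\mathcal D_A$ and suppressing only that of $\mathcal D_B$, as you do, makes that extra check unnecessary.
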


\begin{proof}
Both factors are connected. Indeed, every component of $G[A]$ meets
$A_0$, since otherwise it would be a component of $G$. Adding $b$
therefore connects those components. The argument for $G_B$ is the same.

For $X\subseteq A$, let $c_X\in\mathbb F_2^X$ be the column vector defined
by $c_X(x)=1$ precisely when $x\in A_0$. The two cut
matrices have the forms
\[
  M_G(X,\overline X)
  =\left[M_{G[A]}(X,A\setminus X)\mid
  c_X\mathbf 1_{B_0}^{\mathsf T}\right]
\]
and
\[
  M_{G_A}(X,V(G_A)\setminus X)
  =\left[M_{G[A]}(X,A\setminus X)\mid c_X\right].
\]
Since $B_0$ is nonempty, these matrices have the same column space. Hence
\begin{equation}
  \rho_G(X)=\rho_{G_A}(X) \qquad (X\subseteq A).
  \label{eq:split-factor-cut-rank}
\end{equation}
The symmetric equality holds for subsets of $B$.

For any $b_0\in B_0$, the graph $G[A\cup\{b_0\}]$ is isomorphic to
$G_A$. Both marker factors are therefore induced subgraphs of $G$.
Induced-subgraph monotonicity gives the rank-width lower bound, and the
restriction assertion in Lemma~\ref{lem:profile-pendant-stability} gives
the other two lower bounds.

For the reverse bounds, begin with rooted decompositions $\mathcal D_A$ and
$\mathcal D_B$. Suppress each degree-two root, which produces an unrooted
cubic leaf-labelled tree. Let $p_A$ be the neighbour of the marker leaf $b$
in the first tree, and define $p_B$ symmetrically. Delete $a$ and $b$ and
their incident edges. Introduce a new root adjacent to $p_A$ and $p_B$.
After orienting edges away from this root, the result is a full rooted binary
tree whose leaves are exactly $V(G)$. Call the resulting decomposition
$\mathcal D$.

An inherited edge from the first factor displays either a set $Y\subseteq A$
or its complement in $V(G_A)$. In the first case its new cut rank is
$\rho_G(Y)=\rho_{G_A}(Y)$ by
\eqref{eq:split-factor-cut-rank}. In the second case the same conclusion
follows by cut-rank symmetry. The argument on the second factor is identical.
The two edges incident with the new root display the original split
$A\mid B$ and have rank one. At $p_A$, this split cut replaces the old
singleton-marker cut $\{b\}\mid V(G_A)\setminus\{b\}$, also of rank one.
Thus the three incident ranks at $p_A$ are unchanged. The same holds at
$p_B$, and every other inherited fork keeps all three of its incident
ranks. The new root has load two. Since the connected factors have at least
three vertices, their maximum fork loads are at least three. The combined
decomposition consequently satisfies
\[
  w(\mathcal D)\leq\max\{w(\mathcal D_A),w(\mathcal D_B)\},
  \qquad
  \ell(\mathcal D)\leq
  \max\{\ell(\mathcal D_A),\ell(\mathcal D_B)\}.
\]
Using width-optimal and load-optimal factor decompositions gives the first
two upper bounds. With $d$-optimal factor decompositions, the same
construction gives
\[
  d(\mathcal D)
  \leq\max\{d(\mathcal D_A),d(\mathcal D_B)\}
  =\max\{d^\star(G_A),d^\star(G_B)\}.
\]
This proves the last upper bound. Thus the first identity recovers the
established rank-width split law, while the other two identities are the
profile refinement needed here.
\end{proof}

Let $G$ be a nonempty graph. Start with the connected components of $G$.
Whenever a current factor has a nontrivial split, replace it by its two
marker factors. Continue until no factor has a nontrivial split, and let
$\mathcal A(G)$ be the resulting multiset of terminal factors. The process
terminates because a split replacement of a factor of order $n$ produces
two marker factors of order at most $n-1$.

\begin{theorem}[Atomic maximum identity]
\label{thm:atomic-profile-formula}
For every nonempty graph $G$,
\[
  \rw(G)=\max_{A\in\mathcal A(G)}\rw(A),
\]
\[
  \ell^\star(G)=\max_{A\in\mathcal A(G)}\ell^\star(A),
  \qquad
  d^\star(G)=\max_{A\in\mathcal A(G)}d^\star(A).
\]
These identities hold for every sequence of split replacements.
\end{theorem}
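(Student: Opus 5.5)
The plan is an induction along the replacement process, using Lemma~\ref{lem:profile-disjoint-union} for the first step and Lemma~\ref{lem:split-profile-composition} for every later step. All three identities have the same shape, so I treat a generic parameter $\Phi\in\{\rw,\ell^\star,d^\star\}$. The goal is to show that, for every finite sequence of replacements, the maximum of $\Phi$ over the current multiset of factors never changes.

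First, I will record the invariant. Let $\mathcal F_0,\mathcal F_1,\ldots,\mathcal F_N=\mathcal A(G)$ be the successive multisets of factors produced by an arbitrary run. Here $\mathcal F_0$ is the multiset of connected components of $G$, and each $\mathcal F_{i+1}$ arises from $\mathcal F_i$ by replacing one factor $K$ that has a nontrivial split $(A,B)$ by its marker factors $K_A$ and $K_B$. The claim is that $\max_{K\in\mathcal F_i}\Phi(K)$ is independent of $i$ and equals $\Phi(G)$.

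For $i=0$, I iterate Lemma~\ref{lem:profile-disjoint-union} over the finitely many components, which are nonempty. This gives $\Phi(G)=\max_{K\in\mathcal F_0}\Phi(K)$. The classical rank-width law is included in that lemma.

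For the inductive step, the replaced factor $K$ is connected. Components are connected, and Lemma~\ref{lem:split-profile-composition} shows that marker factors of a connected graph are connected. Since the replaced factor has a nontrivial split, the same lemma applies and gives $\Phi(K)=\max\{\Phi(K_A),\Phi(K_B)\}$. The other members of the multiset are unchanged, so the maximum over $\mathcal F_{i+1}$ equals the maximum over $\mathcal F_i$. Termination, as noted before the theorem, makes the run finite, so $\mathcal F_N=\mathcal A(G)$ is reached. This proves all three identities at once and for every admissible sequence of replacements, which is the final clause. The theorem does not assert that $\mathcal A(G)$ itself is sequence-independent, only that the maxima agree, so no uniqueness of split decompositions is needed.

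The step I expect to need the most care is checking that Lemma~\ref{lem:split-profile-composition} applies at every stage. It requires a connected factor with both shores of size at least two and cut rank one, and this is exactly the definition of a nontrivial split used in the process. Connectivity is preserved by the first paragraph of that lemma's proof. Beyond this, I will check the degenerate cases. A factor on one or two vertices has no nontrivial split and is simply terminal. The optimized quantities are defined for all nonempty graphs, including single vertices, whose decomposition has no internal node; Lemma~\ref{lem:profile-disjoint-union} covers such components without any size hypothesis.
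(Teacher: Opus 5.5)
Your proposal is correct and follows the paper's argument: Lemma~\ref{lem:profile-disjoint-union} handles the initial connected components, Lemma~\ref{lem:split-profile-composition} is applied at every replacement, and so the maximum stays invariant along any run. Your additional checks, namely that marker factors stay connected and that one- or two-vertex factors are simply terminal, are the details the paper's one-line proof leaves implicit.
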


\begin{proof}
Apply Lemma~\ref{lem:profile-disjoint-union} to the initial connected
components and Lemma~\ref{lem:split-profile-composition} at every
replacement. Iterating the exact maximum identities proves all three
formulas, independently of the choices made during the process.
\end{proof}

\begin{proposition}[Prime exact-rank-two profile pairs]
\label{prop:prime-rank-two-profile}
Let $G$ be a split-prime graph on at least five vertices with
$\rw(G)=2$. Then
\[
  (\ell^\star(G),d^\star(G))=
  \begin{cases}
    (5,6),&\lrw(G)=2,\\
    (6,7),&\lrw(G)\geq3.
  \end{cases}
\]
\end{proposition}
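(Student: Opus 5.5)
The plan is to prove the lower bound $\ell^\star(G)\geq5$, read off both values of $d^\star$ from the comb and width-optimal decompositions, and then show that fork load five forces linear rank width two. Two facts about $G$ are used throughout. Since $G$ is connected, every nonempty proper shore has cut rank at least one. Since $G$ is split-prime, every shore $X$ with $2\leq|X|\leq n-2$ has cut rank at least two, where $n=|V(G)|\geq5$.

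\emph{Step 1: $\ell^\star(G)\geq5$.} Fix any rooted rank decomposition $\mathcal D$. Let $t$ be a nonroot internal node with $3\leq|X_t|\leq n-2$.
\begin{itemize}
\item[(i)] If $t$ has a child of order at least two, that child has order at most $n-3$, so its cut rank is at least two. The parent rank is also at least two and the other child rank is at least one, so $\ell_{\mathcal D}(t)\geq5$.
\item[(ii)] If both children of $t$ have order at least two, we again get at least $2+1+2$.
\end{itemize}
Such a node exists, or something equally good happens. Descend from the root, always following a child of largest order. If some node on this descent has order in $[3,n-2]$ we are done. Otherwise the descent passes through a node of order $n-1$ followed by a node of order at most $n-2$.
\begin{itemize}
\item[(a)] If that node of order $n-1$ has two children of order at least two, its load is at least $1+2+2$.
\item[(b)] Otherwise it has a singleton child, and its other child has order $n-2\geq3$. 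That child is a node of the required kind.
\end{itemize}
Hence $\ell(\mathcal D)\geq5$ for every $\mathcal D$.

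\emph{Step 2: the case $\lrw(G)=2$.} The comb decomposition in the proof of Corollary~\ref{cor:linear-rank-width-bound} has $w\leq2$ and $\ell\leq5$. So $\ell^\star(G)=5$ and $d^\star(G)\leq\max\{6,6\}=6$. Proposition~\ref{prop:fork-load-sandwich} gives $d^\star(G)\geq\max\{6,2\rw(G)+2\}=6$.

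\emph{Step 3: the case $\lrw(G)\geq3$, value of $d^\star$.} A width-two decomposition has $\ell\leq6$ and $d\leq\max\{7,6\}=7$. Once we know $\ell^\star(G)=6$, Proposition~\ref{prop:fork-load-sandwich} gives $d^\star(G)\geq7$, so $d^\star(G)=7$. It therefore remains to show that $\ell^\star(G)\geq6$. I will prove the contrapositive: a decomposition $\mathcal D$ with $\ell(\mathcal D)=5$ yields a linear layout of width at most two.

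\emph{Step 4: structure of a load-five decomposition.} This is the main obstacle.
\begin{itemize}
\item[(a)] All cut ranks are at most two. A nonroot shore of rank at least three lies in a fork whose other two shores are nonempty, so that fork has load at least $3+1+1$. For the relevant range, split-primeness raises this to at least six.
\item[(b)] Every nonroot internal node $t$ with $|X_t|\leq n-2$ and rank at least two has a singleton child. Otherwise its load is at least $2+2+2=6$.
\end{itemize}
The only places where two nonsingleton children can meet are therefore the following:
\begin{itemize}
\item[(c)] the root;
\item[(d)] a root child $Z$ of order $n-1$, whose sibling is a singleton $s$.
\end{itemize}
So the tree is a top fork with children $Y_1,Y_2$, possibly together with the extra singleton $s$, and below each $Y_i$ the tree is a caterpillar. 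Its spine is $Y_i=Z^i_0\supset Z^i_1\supset\cdots$, with one singleton removed at each step.

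\emph{Step 5: the layout.} List the vertices in three blocks:
\begin{itemize}
\item[(a)] the vertices of $Y_1$ from the bottom of its spine upward, so every prefix inside $Y_1$ is a spine shore $Z^1_j$;
\item[(b)] then $s$, if present;
\item[(c)] then $Y_2$ from the top of its spine downward, so every later prefix is the complement of a spine shore $Z^2_j$.
\end{itemize}
Each prefix is therefore a shore of $\mathcal D$ or the complement of one. The prefix $Y_1\cup\{s\}$ is the complement of $Y_2$. Cut-rank symmetry and Step 4(a) then bound every prefix cut by two. Hence $\lrw(G)\leq2$, which is the required contradiction.

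I expect the careful case check in Step 4 to be the delicate part. It must verify that no other configuration of nonsingleton children survives load five, including forks near the root where the parent shore has order $n-1$ and rank one.
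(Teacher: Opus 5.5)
Your proof is correct and is essentially the paper's argument: load five rules out any nonroot fork whose three incident cuts all have at least two vertices on each side, which forces a caterpillar and hence a width-two layout. The paper instead suppresses the degree-two root and works in the unrooted cubic tree, which absorbs your separate root and order-$(n-1)$ root-child cases and gives $\ell^\star(G)\geq5$ by counting internal vertices rather than by your descent. One small slip: in Step~4(a), a rank-three shore whose parent is the root does not get the bound $3+1+1$, because the root shore has rank zero, though that fork still has load $2\rho\geq6$. Citing $2w(\mathcal D)\leq\ell(\mathcal D)$ from Proposition~\ref{prop:fork-load-sandwich} settles 4(a) directly.
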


\begin{proof}
In every width-two decomposition of $G$, a leaf cut has rank one and every
cut with two nonempty vertices on each side has rank two. Suppress the
degree-two root to obtain the usual unrooted cubic decomposition tree. If
a decomposition has fork load at most five, then
Proposition~\ref{prop:fork-load-sandwich} forces its width to be two. No
internal vertex of the unrooted tree is incident with three nonleaf edges.
Every internal vertex therefore touches a graph leaf. The internal
vertices form a path, so the decomposition tree is a caterpillar. Hence
$\ell^\star(G)\leq5$ implies
$\lrw(G)\leq2$. The converse follows from a width-two caterpillar
decomposition, whose nonroot fork loads are at most $2+2+1=5$.

The load cannot be at most four. Indeed,
Proposition~\ref{prop:fork-load-sandwich} would force the decomposition to
have width two. Every internal vertex would then touch at least two graph
leaves. The internal skeleton is connected and has maximum degree one, so
it has at most two vertices and the tree has at most four graph leaves.
This contradicts $|V(G)|\geq5$. Thus
$\ell^\star(G)=5$ when $\lrw(G)=2$.
If $\lrw(G)\geq3$, the first paragraph gives
$\ell^\star(G)\geq6$, while the general upper bound gives equality.

In the first case, a width-two caterpillar with load five has optimized
dimension bound six. In the second case, every width-two decomposition has
load six and optimized dimension bound seven. The lower bound
$d^\star(G)\geq\ell^\star(G)+1$ completes both equalities for $d^\star(G)$.
\end{proof}

\begin{theorem}[Complete optimized-profile classification at rank width two]
\label{thm:rank-two-profile-classification}
Let $G$ be any graph with $\rw(G)=2$. Its optimized profile pair is
determined as follows. In any atomic factorization from
Theorem~\ref{thm:atomic-profile-formula}, let $\mathcal P_2(G)$ be the
terminal factors of rank width exactly two. Then $\mathcal P_2(G)$ is
nonempty and
\[
  \ell^\star(G)=
  \begin{cases}
    5,&\lrw(P)=2\text{ for every }P\in\mathcal P_2(G),\\
    6,&\lrw(P)\geq3\text{ for some }P\in\mathcal P_2(G).
  \end{cases}
\]
Moreover,
\[
  d^\star(G)=\ell^\star(G)+1\in\{6,7\}.
\]
\end{theorem}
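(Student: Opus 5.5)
The plan is to reduce everything to the atomic maximum identity (Theorem~\ref{thm:atomic-profile-formula}) and the prime classification (Proposition~\ref{prop:prime-rank-two-profile}), so the work lies in controlling the terminal factors that are not split-prime of rank width two.

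First, by Theorem~\ref{thm:atomic-profile-formula}, $\rw(G)=\max_{A\in\mathcal A(G)}\rw(A)=2$, so some terminal factor has rank width exactly two; thus $\mathcal P_2(G)\neq\varnothing$. Every terminal factor $A$ is connected and has no nontrivial split. If $|V(A)|\le 4$, or more generally if $A$ has no cut of rank one with both shores of size at least two, we are in one of two situations. Either $A$ is split-prime: every $X$ with $2\le|X|\le|V(A)|-2$ has $\rho_A(X)\ge2$ (rank one is excluded by the absence of splits, rank zero by connectivity). Or $|V(A)|\le3$, where no such $X$ exists.

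Next I classify the remaining factors.
\begin{enumerate}
\item Factors of rank width at most one are connected, so they are $K_1$, $K_2$, or have at least three vertices with $\ell^\star=3$ (the remark after Corollary~\ref{cor:fork-load-every-rank}). Trivial factors have $\ell^\star\le 2$ by a direct check. In every case $\ell^\star\le3<5$ and $d^\star\le\ell^\star+2\le5<6$.
\item A factor in $\mathcal P_2(G)$ is split-prime. It must have at least five vertices, since a graph on at most four vertices has every cut with both shores of size two of rank at most two, and a quick check shows rank width at most one for connected graphs of order at most four...
\end{enumerate}
The second item is the delicate point: $C_4$-type and four-vertex graphs have rank width at most one, because any cut $2\mid2$ together with the leaf cuts gives width $\le1$ only if the $2\mid2$ cut has rank one. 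A split-prime four-vertex graph would need a $2\mid2$ cut of rank two, which forces a perfect matching across. Checking $K_4$, $C_4$, $P_4$, and the paw shows each has some $2\mid2$ partition of rank one. So every split-prime connected graph of rank width two has at least five vertices, and Proposition~\ref{prop:prime-rank-two-profile} applies: each $P\in\mathcal P_2(G)$ has profile $(5,6)$ or $(6,7)$ according as $\lrw(P)=2$ or $\lrw(P)\ge3$.

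Finally I combine. By Theorem~\ref{thm:atomic-profile-formula}, $\ell^\star(G)$ is the maximum over factors. Factors outside $\mathcal P_2(G)$ contribute at most $3$, so $\ell^\star(G)=\max_{P\in\mathcal P_2(G)}\ell^\star(P)$, which is $5$ if all of these factors have $\lrw=2$ and $6$ otherwise. The same identity gives $d^\star(G)$ as the maximum of $6$ or $7$ over $\mathcal P_2(G)$ against at most $5$ elsewhere, hence $d^\star(G)=\ell^\star(G)+1$. The expected main obstacle is the small-case verification that factors of rank width two are split-prime with at least five vertices, together with bounding $d^\star$ for the small rank-width-$\le1$ factors. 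I would handle the latter by exhibiting explicit decompositions, for example a single cherry for $K_2$ with load $0+1+1=2$ and $d=\max\{3,4\}=4$.
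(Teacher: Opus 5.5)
Your proof is correct and follows the paper's route. You get nonemptiness of $\mathcal P_2(G)$ from the atomic maximum identity, show its members are split-prime, and apply Proposition~\ref{prop:prime-rank-two-profile} once each member has at least five vertices. You then note that the rank-width-at-most-one factors stay below both exact-rank-two pairs. You bound them by $\ell^\star\leq3$ and $d^\star\leq\ell^\star+2\leq5$, whereas the paper lists the exact pairs $(0,2)$, $(2,4)$, $(3,4)$; either suffices.

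The one defect is the four-vertex check. The connected graphs on four vertices are $P_4$, $K_{1,3}$, $C_4$, the paw, the diamond $K_4-e$, and $K_4$, so your list omits $K_{1,3}$ and $K_4-e$. Both also have a $2\mid2$ cut of rank one, so your conclusion survives:
\begin{itemize}
\item for $K_{1,3}$, take the centre with one leaf against the other two leaves;
\item for $K_4-e$, take the two degree-three vertices against the two degree-two vertices.
\end{itemize}
The paper avoids enumeration altogether. Write $a_{ij}$ for the adjacency indicators. Over $\mathbb F_2$ the determinants of the three $2\times2$ cut matrices sum to zero, because each product $a_{ij}a_{kl}$ coming from a perfect matching $\{ij,kl\}$ of $K_4$ occurs in exactly two of them. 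Hence some $2\mid2$ cut matrix is singular, and a decomposition displaying that cut has width at most one. Your remark that a rank-two $2\mid2$ cut forces a crossing perfect matching points toward this parity argument but does not finish it.
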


\begin{proof}
Every nonempty graph on at most four vertices has rank width at most one.
For graphs of order at most three, every displayed cut has a shore of size
at most one and hence cut rank at most one. For four vertices, the
determinants over $\mathbb F_2$ of the cut matrices
for the three partitions into two pairs sum to zero. At least one is
singular, and a decomposition displaying that partition has width at most
one. Theorem~\ref{thm:atomic-profile-formula} now shows that
$\mathcal P_2(G)$ is nonempty. Every member has at least five vertices.
It is connected and has no nontrivial split, so it is split-prime.

Proposition~\ref{prop:prime-rank-two-profile} gives optimized profile pair
$(5,6)$ or $(6,7)$ on each member of $\mathcal P_2(G)$ according as its
linear rank width is two or at least three. Every remaining terminal factor
is connected and has rank width at most one. A singleton has optimized
profile pair $(0,2)$, $K_2$ has optimized profile pair $(2,4)$, and every
connected rank-width-one graph on at least three vertices has optimized
profile pair $(3,4)$. For the last assertion, a
width-one decomposition has load at most three, while connectedness forces
all three cuts at a deepest nonroot fork to have positive rank. The profile
then follows from Proposition~\ref{prop:fork-load-sandwich}. These profile
pairs are smaller than both exact-rank-two possibilities. Taking maxima in
Theorem~\ref{thm:atomic-profile-formula} proves the theorem.
\end{proof}

\begin{corollary}[Triangular-prism obstruction]
\label{cor:triangular-prism-profile}
Let $P$ be the triangular prism. Then
\[
  \rw(P)=2,
  \qquad
  \lrw(P)=3,
  \qquad
  \ell^\star(P)=6,
  \qquad
  d^\star(P)=7.
\]
For every $q\geq3$, there is a connected induced supergraph $P_q$ with
the same rank width and optimized profile pair, and with $\lrw(P_q)=q$.
\end{corollary}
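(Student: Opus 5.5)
The plan is to reduce everything to Proposition~\ref{prop:prime-rank-two-profile} and Theorem~\ref{thm:fork-load-separation}. I first establish the three width facts for the triangular prism $P$, whose vertices are $x_1,x_2,x_3,y_1,y_2,y_3$, with triangles on the $x_i$ and on the $y_i$ and matching edges $x_iy_i$. After that the profile pair and the supergraph family follow from results already proved.

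For $\rw(P)=2$ I would exhibit an explicit decomposition. Take cherries $\{x_1,y_1\}$, $\{x_2,y_2\}$, $\{x_3,y_3\}$, join the first two, and attach the third at the root. The cut of $\{x_i,y_i\}$ has two independent rows, $x_i\mapsto\{x_j,x_k\}$ and $y_i\mapsto\{y_j,y_k\}$, so its rank is two. Singletons have rank one, and the complement of a pair cut has the same rank. This gives $\rw(P)\le 2$. The reverse inequality $\rw(P)\geq2$ follows from split-primality. $P$ is connected, and I would check that every $X$ with $|X|\in\{2,3\}$ has $\rho_P(X)\ge2$ (sizes four and five reduce to these by symmetry). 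For pairs, the two rows are always distinct and nonzero: two rows could only coincide for twins, and $P$ has no twins. For triples, up to symmetry the cases are a triangle $\{x_1,x_2,x_3\}$, which gives the identity matrix of rank three, and the mixed triples of shapes $\{x_1,x_2,y_1\}$ and $\{x_1,x_2,y_3\}$. For each of these I would write the $3\times3$ matrix and confirm rank at least two. Hence $P$ is split-prime with no cut of rank at most one separating two pairs, so $\rw(P)\ge2$. This also supplies the hypotheses of Proposition~\ref{prop:prime-rank-two-profile}, since $|V(P)|=6\ge5$.

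The main obstacle is $\lrw(P)=3$. For the upper bound, any layout works, since a cut with three vertices on each side has rank at most three. For the lower bound I would argue directly on layouts. In any layout the prefix of length three is a triple $X$ with complement a triple, so it suffices to show $\rho_P(X)=3$ for every triple $X$. The triangle case gives rank three. For $\{x_1,x_2,y_1\}$ against $\{x_3,y_2,y_3\}$, the rows are $x_1\mapsto\{x_3\}$, $x_2\mapsto\{x_3,y_2\}$ and $y_1\mapsto\{y_2,y_3\}$, which are independent. For $\{x_1,x_2,y_3\}$ against $\{x_3,y_1,y_2\}$, the rows are $x_1\mapsto\{x_3,y_1\}$, $x_2\mapsto\{x_3,y_2\}$ and $y_3\mapsto\{x_3,y_1,y_2\}$, which are also independent. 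Using the automorphism group, which swaps the triangles and permutes indices, every triple is equivalent to one of these three shapes. Every layout therefore has width at least three, so $\lrw(P)=3$. Proposition~\ref{prop:prime-rank-two-profile} then yields $(\ell^\star(P),d^\star(P))=(6,7)$.

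Finally, for $q\ge3$ I would apply Theorem~\ref{thm:fork-load-separation} to $P$, which is connected with at least three vertices. It gives a connected induced supergraph $P_q$ with the same rank width and profile pair, and with $\lrw(P_q)=\max\{3,q\}=q$.
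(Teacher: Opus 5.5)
Your proposal is correct and follows essentially the same route as the paper. You establish $\rho_P(X)=\min\{|X|,6-|X|\}$ by checking pairs and the three triple orbits, obtain $\rw(P)=2$ from the three-cherry decomposition and split-primality, obtain $\lrw(P)=3$ from the rank-three middle prefix of every layout, and then invoke Proposition~\ref{prop:prime-rank-two-profile} and Theorem~\ref{thm:fork-load-separation} exactly as the paper does; the one step left implicit is that every decomposition displays a two-leaf shore, which the paper states explicitly.
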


\begin{proof}
Label the two triangles by
$A=\{a_0,a_1,a_2\}$ and $B=\{b_0,b_1,b_2\}$, with matching edges
$a_ib_i$. Direct calculation gives
\begin{equation}
  \rho_P(X)=\min\{|X|,6-|X|\}
  \qquad (X\subseteq V(P)).
  \label{eq:prism-cut-rank}
\end{equation}
To verify the calculation, cut-rank symmetry reduces to $|X|\leq3$.
Singleton cuts have rank one. For the three pair orbits, the two rows of
the cut matrix are nonzero and unequal, so every pair cut has rank two.
For triples, row and column permutations reduce the cut matrix to $I_3$
or to one of
\[
  \begin{pmatrix}
    1&1&1\\
    1&0&1\\
    0&1&1
  \end{pmatrix},
  \qquad
  \begin{pmatrix}
    1&1&0\\
    0&0&1\\
    0&1&1
  \end{pmatrix}.
\]
Both displayed matrices are nonsingular over $\mathbb F_2$.
The bundled deterministic calculation verifier reproduces these finite
cut-rank calculations as a supplementary certificate. The argument above is
independent of that computation.

Equation~\eqref{eq:prism-cut-rank} shows that $P$ is split-prime. Three
two-leaf cherries joined at a central fork give a width-two decomposition,
while every binary decomposition displays a two-leaf shore of rank two.
Thus $\rw(P)=2$. Every layout has a three-vertex prefix of cut rank three,
while no cut of a six-vertex graph has rank greater than three. Hence
$\lrw(P)=3$. Proposition~\ref{prop:prime-rank-two-profile} gives the
optimized profile pair. The final assertion follows from
Theorem~\ref{thm:fork-load-separation}.
\end{proof}

\begin{corollary}[Sharp minimum orders of the optimized profile pairs]
\label{cor:minimum-rank-two-profile-orders}
Among graphs of rank width exactly two, the minimum order of a graph with
optimized profile pair $(\ell^\star,d^\star)=(5,6)$ is five, and the
minimum order of a graph with optimized profile pair $(6,7)$ is six.
\end{corollary}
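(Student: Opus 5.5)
The proof has two halves for each profile pair: a lower bound on the order of any graph of rank width exactly two with that pair, and an explicit graph attaining it.

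\emph{Lower bounds.} By the proof of Theorem~\ref{thm:rank-two-profile-classification}, a graph $G$ with $\rw(G)=2$ has an atomic factor $P\in\mathcal P_2(G)$. This factor is split-prime and has at least five vertices, and every factor in a split decomposition is an induced subgraph of $G$. Hence $|V(G)|\geq5$, which settles the pair $(5,6)$.

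For the pair $(6,7)$, the classification theorem requires some $P\in\mathcal P_2(G)$ with $\lrw(P)\geq3$. I will show that every graph on at most five vertices has linear rank width at most two. In a linear layout of five vertices, each prefix cut has one shore of size at most two, so its cut rank is at most two. Therefore a graph of order five cannot have linear rank width three. Consequently $|V(P)|\geq6$, and so $|V(G)|\geq|V(P)|\geq6$.

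\emph{Attainment for $(6,7)$.} The triangular prism of Corollary~\ref{cor:triangular-prism-profile} has order six and optimized profile pair $(6,7)$.

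\emph{Attainment for $(5,6)$.} I need a split-prime graph on five vertices with rank width two; linear rank width two then follows from the observation above. I propose the cycle $C_5$. First, $\rw(C_5)\geq2$: every decomposition displays a shore of size two (or its complement). In $C_5$, a pair of vertices has two distinct nonzero neighbourhood rows outside the pair, whether the pair is adjacent or not, so the cut rank is two. Second, this same computation shows $C_5$ is split-prime. Third, $\rw(C_5)\leq2$, since a five-vertex graph has all cut ranks at most two.

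Proposition~\ref{prop:prime-rank-two-profile} then gives the pair $(5,6)$ for $C_5$.

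\emph{Main obstacle.} The delicate step is verifying the pair cuts of $C_5$. For an adjacent pair $\{1,2\}$, the rows are the neighbourhoods $\{5\}$ and $\{3\}$. For a nonadjacent pair $\{1,3\}$, the rows are $\{2,5\}$ and $\{2,4\}$. Both pairs of rows are nonzero and distinct, hence linearly independent over $\mathbb F_2$, so both cuts have rank two.
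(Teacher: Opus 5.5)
Your proposal is correct and follows essentially the same route as the paper: the order-five lower bound at rank width two, the observation that graphs on at most five vertices have linear rank width at most two combined with Theorem~\ref{thm:rank-two-profile-classification} for the $(6,7)$ lower bound, and $C_5$ and the triangular prism as witnesses. The only difference is that you obtain $|V(G)|\geq5$ through an atomic factor that is an induced subgraph, whereas the paper uses directly that every graph on at most four vertices has rank width at most one; both arguments are valid.
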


\begin{proof}
Every graph of rank width exactly two has at least five vertices. Every two-vertex
shore of the cycle $C_5$ has cut rank two, so $C_5$ is split-prime. A
cyclic layout has width two, while split-primality excludes rank width one.
Hence $C_5$ has rank width and linear rank width two, and
Proposition~\ref{prop:prime-rank-two-profile} gives optimized profile pair
$(5,6)$.

For every nonempty graph on at most five vertices and every vertex layout,
each cut
has a shore of size at most two. Its cut rank is therefore at most two, so
the graph has linear rank width at most two. A graph with optimized profile
pair $(6,7)$
has, by Theorem~\ref{thm:rank-two-profile-classification}, a terminal factor
of rank width exactly two and linear rank width at least three. Such a factor, and
hence the original graph, has at least six vertices. The triangular prism
attains order six by Corollary~\ref{cor:triangular-prism-profile}.
\end{proof}

\section{Quantitative consequences}
\label{sec:quantitative-consequences}

Restricting to finite nonempty uncoloured graphs, for $r\geq1$ put
\[
  \begin{aligned}
    D_{\mathrm{rw}}(r)
    &=\sup\bigl\{\dim_{\mathrm{WL}}(G):\rw(G)\leq r\bigr\},\\
    D_{\mathrm{lrw}}(r)
    &=\sup\bigl\{\dim_{\mathrm{WL}}(G):\lrw(G)\leq r\bigr\}.
  \end{aligned}
\]
and put
\[
  D_2=D_{\mathrm{rw}}(2).
\]

\begin{corollary}[Rank-width-at-most-two interval]
\label{cor:rank-width-two-interval}
The Weisfeiler-Leman dimension of the class of graphs with rank width at
most two satisfies
\[
  2\leq D_2\leq7.
\]
\end{corollary}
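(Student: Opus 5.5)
The upper bound $D_2\leq7$ will come directly from Corollary~\ref{cor:rank-width-bound} applied with $k=2$. Every finite nonempty graph of rank width at most two is identified by $(3\cdot2+1)$-dimensional Weisfeiler-Leman. For rank width zero or one this also follows from the same corollary, since $3k+1\leq7$ there. The supremum defining $D_2$ is therefore at most $7$. Alternatively, Theorem~\ref{thm:rank-two-profile-classification} gives $d^\star(G)\leq7$ for exact rank width two, and Proposition~\ref{prop:fork-load-sandwich} gives $\dim_{\mathrm{WL}}(G)\leq d^\star(G)$. I would record this as a consistency check.

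For the lower bound $D_2\geq2$, I will exhibit one graph of rank width at most two whose Weisfeiler-Leman dimension exceeds one. That is, I need a nonisomorphic same-order graph that colour refinement does not distinguish from it. The natural choice is $G=C_6$ against $H=C_3\mathbin{\dot\cup}C_3$. Both are $2$-regular on six vertices, so uncoloured colour refinement stabilizes immediately with one class on each side and equal histograms. Hence $G\simeq_1 H$, while the two graphs are nonisomorphic because one is connected and the other is not. Thus $\dim_{\mathrm{WL}}(C_6)\geq2$.

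It remains to check that $\rw(C_6)\leq2$. I would use the layout $(v_1,\ldots,v_6)$ along the cycle. Each prefix $\{v_1,\ldots,v_i\}$ has at most two vertices with neighbours outside, namely $v_1$ and $v_i$, so its cut matrix has at most two nonzero rows and rank at most two. Therefore $\rw(C_6)\leq\lrw(C_6)\leq2$.

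Neither step is a real obstacle. The only care needed is in the definitions: $D_{\mathrm{rw}}(2)$ ranges over uncoloured graphs with $\rw\leq2$, and identification requires a nonisomorphic target of the same order, which $H$ provides.
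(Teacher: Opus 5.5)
Your proposal is correct and follows essentially the same route as the paper. The upper bound comes from Corollary~\ref{cor:rank-width-bound} with $k=2$. The lower bound compares the $2$-regular graphs $C_6$ and $2K_3$, which colour refinement cannot tell apart. The cycle layout certifies $\rw(C_6)\leq\lrw(C_6)\leq2$. The paper writes out this layout as an explicit comb decomposition rather than invoking $\rw\leq\lrw$, which is only a cosmetic difference.
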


\begin{proof}
The upper bound is Corollary~\ref{cor:rank-width-bound} with $k=2$.

For the lower bound, compare the cycle $C_6$ with the disjoint union
$2K_3$. Both graphs have six vertices and are $2$-regular. Starting from the
uniform colouring, one-dimensional colour refinement leaves all six
vertices in one colour class in both graphs at every round. Thus
$1$-dimensional Weisfeiler-Leman does not distinguish the two nonisomorphic
graphs, and $\dim_{\mathrm{WL}}(C_6)\geq2$.

For completeness, label the vertices of $C_6$ cyclically as
$v_1,\ldots,v_6$. Consider the rooted binary comb whose internal node shores
are
\[
  X_i=\{v_1,\ldots,v_i\}
  \qquad (2\leq i\leq6),
\]
where $X_i$ has children $X_{i-1}$ and $\{v_i\}$, with
$X_1=\{v_1\}$. For $1\leq i\leq5$, at most the two cycle edges
$v_iv_{i+1}$ and $v_1v_6$ cross the cut
$X_i\mid\overline{X_i}$. Hence its cut matrix has rank at most two over
$\mathbb F_2$. Every singleton shore has cut rank at most one, and the root
shore has cut rank zero. This is a linear rank decomposition of width at
most two, so $\lrw(C_6)\leq2$ and $\rw(C_6)\leq2$. Therefore $C_6$ is an
admissible source in the definition of $D_2$, which proves the lower bound.
\end{proof}

Theorem~\ref{thm:rank-two-profile-classification} shows that every graph of
rank width exactly two has optimized dimension bound six or seven. The two
values first occur at orders five and six by
Corollary~\ref{cor:minimum-rank-two-profile-orders}, and both persist at
arbitrarily large linear rank width. The cycle $C_5$ has parameter tuple
$(\rw,\lrw,\ell^\star,d^\star)=(2,2,5,6)$, while the triangular prism has
parameter tuple $(2,3,6,7)$. Applying
Theorem~\ref{thm:fork-load-separation} to
both graphs gives, for every $q\geq3$, two connected graphs with the same
pair $(\rw,\lrw)=(2,q)$ and respective optimized profile pairs $(5,6)$ and
$(6,7)$. Thus global linear rank width does not determine the optimized
profile pair. The optimized-dimension endpoint seven is exact for this
method, but this does not
assert that any such graph has Weisfeiler-Leman dimension seven. Improving
the upper bound on $D_2$ requires information beyond the two optimized
quantities used in Theorem~\ref{thm:main}.

For a vertex layout $\pi$ of a graph $B$, let
$\operatorname{cutw}_{\pi}(B)$ be the maximum number of edges crossing a cut
of $\pi$. We use the following parity-fibre form of the uncoloured
Cai-F\"urer-Immerman construction
\cite{CaiFurerImmerman1992}, in the formulation of Neuen
\cite[Section~3]{Neuen2024Homomorphism}. For $U\subseteq V(B)$ and
$v\in V(B)$, let $E_B(v)$ be the set of base edges incident with $v$, and put
\[
  M_U(v)=
  \bigl\{(v,S):S\subseteq E_B(v),\
  |S|\equiv\mathbf 1_{v\in U}\pmod 2\bigr\}.
\]
The vertex set of $\operatorname{CFI}(B,U)$ is the union of these fibres.
Vertices $(v,S)$ and $(u,T)$ are adjacent precisely when $uv\in E(B)$ and
$uv\notin S\mathbin{\triangle}T$. In particular, every fibre is independent.

For $A\subseteq V(B)$, let $\eta_B(A)$ be the number of base vertices
incident with an edge between $A$ and $V(B)\setminus A$. A vertex branch
decomposition of $B$ is an unrooted subcubic tree whose leaves are in
bijection with $V(B)$. Every tree edge displays a bipartition
$A\mid V(B)\setminus A$. Define the endpoint-boundary width by
\[
  \ebw(B)
  =\min_T\max_{e\in E(T)}\eta_B(A_e),
\]
where the maximum includes the edges incident with leaves.

\begin{lemma}[CFI endpoint-boundary bound]
\label{lem:cfi-endpoint-boundary-bound}
Let $B$ be a connected subcubic graph on at least two vertices and let
$U\subseteq V(B)$. Then
\[
  \rw\bigl(\operatorname{CFI}(B,U)\bigr)\leq\ebw(B).
\]
\end{lemma}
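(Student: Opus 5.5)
Take an optimal vertex branch decomposition $T$ of $B$, with every tree edge satisfying $\eta_B(A_e)\leq\ebw(B)$, and lift it to a rank decomposition of $\operatorname{CFI}(B,U)$. Replace each leaf of $T$ belonging to $v$ by a subtree whose leaves are the vertices of the fibre $M_U(v)$. Since $B$ is subcubic, $|E_B(v)|\leq3$, so each fibre has at most four vertices. Then root the resulting tree at any edge and suppress degree-two nodes to obtain a full rooted binary tree. There are two kinds of cut to control: cuts displayed by edges of $T$, which are unions of whole fibres, and cuts internal to a fibre subtree, including the edge joining that subtree to the rest.

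For a union of fibres $X=\bigcup_{v\in A}M_U(v)$, I will bound the rank of $M(X,\overline X)$ by $\eta_B(A)$. A row indexed by $(v,S)$ with $v\in A$ is nonzero only in columns $(u,T)$ with $u\notin A$ and $uv\in E(B)$. Its entry is $1$ exactly when $uv\notin S\triangle T$, that is, $[uv\in S]+[uv\in T]+1=0$ over $\mathbb F_2$. So for each crossing base edge $e=uv$, the block of the matrix on $M_U(v)\times M_U(u)$ is $\mathbf 1+s\,\mathbf 1^{\mathsf T}+\mathbf 1\,t^{\mathsf T}$ with $s(S)=[e\in S]$ and $t(T)=[e\in T]$, and zero outside.

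This is where the factoring through the endpoints of crossing edges comes in. Grouping columns by the outside endpoint $u\in\partial\overline A$, the row of $(v,S)$ restricted to the columns of $M_U(u)$ is a vector determined by the pair $(v,S\cap E(v,u))$ for the incident crossing edge, if there is one. I will show that the row space lies in a space of dimension at most the number of endpoints. The naive count gives about $2\cdot|\text{crossing edges}|$, using $\mathbf 1$ and $t$ per edge. The improvement has to use the parity constraint inside each fibre: for a vertex $u$ outside $A$, the vectors $t_e$ over the crossing edges $e$ at $u$, together with $\mathbf 1$, span a space whose dimension is bounded by the number of crossing edges at $u$. Symmetrically on the row side, the same argument applies at each inside endpoint $v$. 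By the triangle-type estimate $\rho(X)\le\min$ of row-side and column-side contributions, or by a mixed factorization, the total should come to the number of endpoints, $\eta_B(A)$. \textbf{This rank count is the main obstacle.} I would first try to write the cut matrix as $R\,C$ with inner dimension $\eta_B(A)$, indexing the inner coordinates by endpoints. For each endpoint take one coordinate carrying the parity of $S$ restricted to crossing edges, using $\mathbf 1$ absorbed via the parity of $|S|$ modulo the known $\mathbf 1_{v\in U}$. I would then check the cases of one, two and three crossing edges at an endpoint by hand.

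For the fibre-internal cuts, it suffices to choose the fibre subtree so that every displayed shore inside a fibre has small rank. A subset $Y$ of a single fibre has cut rank at most $\min\{|Y|,\deg_B(v)\}$, because its rows are supported on columns of neighbouring fibres and, by the block form, depend only on $S\cap E_B(v)$ through at most $\deg_B(v)$ affine coordinates. The edge joining the whole fibre to the rest has rank at most $\eta_B(\{v\})\le\deg_B(v)+1$; this is the leaf-edge term already included in $\ebw(B)$. The nontrivial cases are pairs inside a fibre of size four, whose rows are two vectors and so have rank at most two. It therefore remains to check that $\ebw(B)\geq2$ whenever $B$ is connected with at least two vertices, since any leaf edge then gives $\eta\geq2$, and that $\ebw(B)\ge \deg_B(v)+1$ dominates the fibre cuts. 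Combining the two cut types gives width at most $\ebw(B)$.
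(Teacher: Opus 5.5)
\textbf{There is a genuine gap.} Your lifting of the branch decomposition matches the paper's, and your treatment of fibre-internal cuts is fine. A proper subset $Y$ of a fibre has $|Y|\le\deg_B(v)$, while the leaf edge at $v$ forces $\ebw(B)\ge\eta_B(\{v\})=\deg_B(v)+1$.

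The gap is the estimate $\rho(X_A)\le\eta_B(A)$ for unions of fibres, which is the entire content of the lemma. You flag it as the main obstacle but do not prove it, and the routes you sketch cannot reach it.

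\begin{enumerate}
\item Your claim that, for $u\notin A$, the vectors $t_e$ over crossing edges at $u$ together with $\mathbf 1$ span a space of dimension at most the number of crossing edges at $u$ is false as soon as $u$ has a non-crossing incident edge. If $\deg_B(u)=2$ and only one edge crosses, $t_e$ is nonconstant on the two-element fibre, so the span is two-dimensional.
\item Where the claim does hold, summing it over $u$, or taking the minimum of the row-side and column-side counts, bounds the rank by the number of crossing \emph{edges}, not endpoints. For $B=K_{3,3}$ with $A$ one colour class, this gives $9$ instead of $\eta_B(A)=6$.
\item Using the parity of $S$ on crossing edges to absorb $\mathbf 1$ also fails. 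That parity is constant only when every edge at $v$ crosses, and then it equals $\mathbf 1_{v\in U}$, which may be zero.
\end{enumerate}

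The missing idea needs no parity at all. Let $J$ be the all-ones block and write each crossing block as $J+s\mathbf 1^{\mathsf T}+\mathbf 1t^{\mathsf T}=\mathbf 1(\mathbf 1+t)^{\mathsf T}+s\mathbf 1^{\mathsf T}$. The factor $\mathbf 1_{M_U(v)}$ is shared by every block in the block-row of an inside endpoint $v$. Likewise $\mathbf 1_{M_U(u)}$ is shared by every block in the block-column of an outside endpoint $u$.

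Let $D$ be the set of endpoints of crossing edges. Collecting terms writes the cut matrix as a sum of $|D|$ rank-one matrices:
\begin{enumerate}
\item for each $v\in D\cap A$, the outer product of the indicator of $M_U(v)$ with the column-side vector equal to $1+\mathbf 1_{uv\in T}$ at each $(u,T)$ with $uv$ crossing, and $0$ elsewhere;
\item for each $u\in D\setminus A$, the outer product of the row-side vector equal to $\mathbf 1_{uv\in S}$ at each $(v,S)$ with $uv$ crossing, and $0$ elsewhere, with the indicator of $M_U(u)$.
\end{enumerate}
In a crossing block $(v,u)$ exactly these two terms contribute, giving $1+t+s$. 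In every other block both vanish.

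This is exactly the paper's factorization $\alpha_x\cdot\beta_y$ through $\mathbb F_2^D$. With it, the rest of your outline goes through.
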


\begin{proof}
Put $H=\operatorname{CFI}(B,U)$. For $A\subseteq V(B)$, let
\[
  X_A=\bigcup_{v\in A}M_U(v)
\]
and let $D$ be the set of endpoints of the base edges crossing
$A\mid V(B)\setminus A$. We first show
\begin{equation}
  \rho_H(X_A)\leq |D|=\eta_B(A).
  \label{eq:cfi-endpoint-cut-bound}
\end{equation}
For $x=(v,S)\in X_A$, define $\alpha_x\in\mathbb F_2^D$ coordinatewise by
\[
  \alpha_x(d)=
  \begin{cases}
    1,&d=v\in D,\\
    \mathbf 1_{vd\in S},&vd\text{ crosses }A\mid V(B)\setminus A,\\
    0,&\text{otherwise}.
  \end{cases}
\]
For $y=(u,T)\notin X_A$, define $\beta_y\in\mathbb F_2^D$ by
\[
  \beta_y(d)=
  \begin{cases}
    1,&d=u\in D,\\
    1+\mathbf 1_{ud\in T},&ud\text{ crosses }A\mid V(B)\setminus A,\\
    0,&\text{otherwise}.
  \end{cases}
\]
In particular, $\alpha_x=0$ when $v\notin D$, and $\beta_y=0$ when
$u\notin D$, because such a base vertex has no neighbour across the cut.

If $uv$ is a crossing base edge, the only coordinates at which both
$\alpha_x$ and $\beta_y$ can be nonzero are $u$ and $v$, and
\[
  \alpha_x\mathbin{\cdot}\beta_y
  =1+\mathbf 1_{uv\in S}+\mathbf 1_{uv\in T}.
\]
This is one precisely when the two membership bits agree, so it is the
adjacency entry of $x$ and $y$ in $H$. If $u$ and $v$ are nonadjacent, the
two supports are disjoint. The cut matrix therefore factors through the
space $\mathbb F_2^D$, which proves
\eqref{eq:cfi-endpoint-cut-bound}.

Take a vertex branch decomposition of $B$ attaining $\ebw(B)$. Replace
each leaf $v$ by a rooted binary tree on the vertices of $M_U(v)$, using
two pairs when the fibre has four vertices. Every inherited tree edge now
displays a union of complete fibres, so its cut rank is at most
$\ebw(B)$ by \eqref{eq:cfi-endpoint-cut-bound}. This includes the edge
attaching a complete fibre to the rest of the tree. Every other new edge
displays at most two fibre vertices on one shore and therefore has cut rank
at most two. Since $B$ is connected and has at least two vertices, the cut
at every base leaf has at least two crossing endpoints, so $\ebw(B)\geq2$.
The lifted tree consequently has width at most $\ebw(B)$. Suppress every
degree-two vertex in the lifted tree, which only removes duplicate displayed
cuts and does not change its width. Then subdivide any edge and use the
subdivision vertex as a root to obtain a rooted rank decomposition in the
convention of this paper.
\end{proof}

\begin{lemma}[CFI block-layout bound]
\label{lem:cfi-cutwidth-bound}
Let $B$ be a connected subcubic graph on at least two vertices, let $\pi$
be a layout of $B$, and let $U\subseteq V(B)$. Then
\[
  \lrw\bigl(\operatorname{CFI}(B,U)\bigr)
  \leq2\operatorname{cutw}_{\pi}(B).
\]
\end{lemma}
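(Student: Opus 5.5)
Order the vertices of $\operatorname{CFI}(B,U)$ fibre by fibre, following $\pi=(u_1,\ldots,u_n)$: first all of $M_U(u_1)$ in any order, then all of $M_U(u_2)$, and so on. We must bound every prefix cut of this layout by $2\operatorname{cutw}_\pi(B)$. A prefix has the form $X_{A_i}\cup W$, where $A_i=\{u_1,\ldots,u_i\}$, $X_{A_i}$ is the union of the complete fibres over $A_i$ as in the proof of Lemma~\ref{lem:cfi-endpoint-boundary-bound}, and $W$ is a proper subset of the next fibre $M_U(u_{i+1})$. We may assume $W\neq\varnothing$; prefixes ending exactly at a fibre boundary are the case $W=\varnothing$ below.

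The main tool is the factorization argument behind \eqref{eq:cfi-endpoint-cut-bound}, refined so that we count crossing edges rather than endpoints. For a vertex set $Y$ that is a union of complete fibres over a set $A$ of base vertices, each crossing base edge $e=uv$ contributes a rank-at-most-two piece. Concretely, the entry between $(v,S)\in Y$ and $(u,T)\notin Y$ is $1+\mathbf 1_{e\in S}+\mathbf 1_{e\in T}$ when $uv\in E(B)$ and $0$ otherwise. The cut matrix is therefore a sum over crossing edges $e$ of matrices of the form $\bigl(\mathbf 1_{v=\text{end}_A(e)}\,(1+s_e+t_e)\,\mathbf 1_{u=\text{end}_{\bar A}(e)}\bigr)$. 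Each such summand is a rank-at-most-two matrix, spanned by the row indicator of the $A$-endpoint fibre and that indicator multiplied by $s_e$. Hence $\rho(Y)\leq2\cdot\#\{\text{crossing edges}\}$. Taking $Y=X_{A_i}$ gives rank at most $2\operatorname{cutw}_\pi(B)$ at fibre boundaries.

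For an intermediate prefix $X_{A_i}\cup W$, the additional crossing pairs are of three kinds:
\begin{enumerate}
\item Pairs between $W$ and its own fibre remainder. These are zero, since every fibre is independent.
\item Pairs between $W$ and later fibres, which arise only through edges from $u_{i+1}$ to $\{u_{i+2},\ldots,u_n\}$.
\item Pairs between $X_{A_i}$ and $M_U(u_{i+1})\setminus W$, which arise only through edges from $A_i$ to $u_{i+1}$.
\end{enumerate}
Applying the same edge-wise decomposition, the cut matrix is a sum over the base edges that cross either $A_i\mid\overline{A_i}$ or $A_{i+1}\mid\overline{A_{i+1}}$. An edge from $A_i$ to $u_{i+1}$, from $u_{i+1}$ to a later vertex, or from $A_i$ to a later vertex contributes rank at most two in each case. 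The number of such edges is $|\delta(A_i)\cup\delta(A_{i+1})|$, which may exceed $\operatorname{cutw}_\pi(B)$. This is the expected main obstacle.

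To resolve it, sharpen the per-edge count. An edge $e$ from $A_i$ to $u_{i+1}$ contributes a matrix whose columns are supported on $M_U(u_{i+1})\setminus W$. Its row space is contained in the span of the indicator of the $A_i$-endpoint fibre and that indicator multiplied by $s_e$. Its column space is spanned by the all-ones vector on $M_U(u_{i+1})\setminus W$ and by $t_e$ restricted there. For an edge from $u_{i+1}$ to a later vertex, the rows are supported on $W$, and the contribution lies in the span of $\mathbf 1_W$ and $s_e|_W$. All edges at $u_{i+1}$ therefore share the common vector $\mathbf 1_W$, or $\mathbf 1_{M\setminus W}$ on the column side. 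Since $B$ is subcubic, the fibre $M_U(u_{i+1})$ has at most four vertices. This suggests the cruder route: bound the rank of the $W$-rows and of the $(M\setminus W)$-columns by $\min\{|W|,|M\setminus W|\}\leq2$, and add these to the contribution of the edges crossing $A_i\mid\overline{A_{i+1}}$. That gives a bound of $2(\operatorname{cutw}_\pi-\deg_{\text{crossing}}(u_{i+1}))+2+\ldots$, which needs balancing.

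The intended clean bound follows from noting that $|\delta(A_i)\setminus\delta(A_{i+1})|+|\delta(A_{i+1})\setminus\delta(A_i)|=\deg u_{i+1}\leq3$. The edges of $\delta(A_i)\cap\delta(A_{i+1})$ are counted twice each, for at most $2|\delta(A_i)\cap\delta(A_{i+1})|$. The edges at $u_{i+1}$ contribute at most $\min(|W|,|M\setminus W|)$ from rows or columns together with the rank-one pieces. Using $|\delta(A_i)|,|\delta(A_{i+1})|\leq\operatorname{cutw}_\pi(B)$, checking the cases $\deg u_{i+1}\in\{1,2,3\}$ should give a total of at most $2\operatorname{cutw}_\pi(B)$. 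Connectivity of $B$ supplies $\operatorname{cutw}_\pi\geq1$, which absorbs the small fibre cases. I expect this case balance at fibre-interior cuts to be the delicate step.
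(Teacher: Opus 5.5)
Your setup agrees with the paper's proof. The fibres are placed consecutively along $\pi$, fibre-boundary cuts are handled by the per-edge rank-two factorization, and an interior cut in $M=M_U(u_{i+1})$ splits into the blocks $X_{A_i}\times(M\setminus W)$, $X_{A_i}\times(\text{later fibres})$ and $W\times(\text{later fibres})$, with $W\times(M\setminus W)=0$.

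\textbf{The gap.} The interior cut is the only nontrivial case, and you leave it unproved (``should give'', ``the delicate step''). Moreover, the bound you sketch for the edges at $u_{i+1}$ is false. The blocks $X_{A_i}\times(M\setminus W)$ and $W\times(\text{later fibres})$ occupy disjoint rows and columns, so their ranks can add. Together they are not bounded by $\min\{|W|,|M\setminus W|\}$.

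For a concrete example, take $U=\varnothing$, the edges $x_1x$, $xv$, $vy$, $yy_1$, $vz$, $zz_1$, and the layout $x_1,x,v,y,y_1,z,z_1$. List $M$ as $000,110,101,011$ in the coordinates $(xv,vy,vz)$, and cut after two words of $M$. Then:
\begin{enumerate}
\item no edge avoiding $v$ crosses this cut;
\item the block from $M_U(x)$ to $\{101,011\}$ is a $2\times2$ permutation matrix;
\item the block from $\{000,110\}$ to $M_U(y)$ is $I_2$.
\end{enumerate}
So the cut rank is $4=2\operatorname{cutw}_\pi(B)$, not at most $2$. Likewise, the global fact $\operatorname{cutw}_\pi(B)\geq1$ does not provide the slack needed when $\deg u_{i+1}=3$, since the edges at $u_{i+1}$ can then contribute rank four.

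\textbf{The missing idea.} You need a local comparison with the two adjacent fibre-boundary cuts. Let $\delta(A)$ be the set of base edges crossing $A\mid V(B)\setminus A$, and put $v=u_{i+1}$. Let $a$ and $b$ be the numbers of neighbours of $v$ before and after it, let $c_0=|\delta(A_i)\cap\delta(A_{i+1})|$, and let $s=|W|$ and $t=|M\setminus W|$. Subadditivity over your blocks gives
\[
  \rho\leq2c_0+\min\{2a,t\}+\min\{s,2b\}\leq2c_0+s+t.
\]
Since $|M|=2^{\deg v-1}$, interior cuts occur only when $\deg v\in\{2,3\}$. In those cases $s+t=2^{\deg v-1}\leq2\max\{a,b\}$, because $a+b=\deg v$. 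Therefore
\[
  \rho\leq2\bigl(c_0+\max\{a,b\}\bigr)
  =2\max\{|\delta(A_i)|,|\delta(A_{i+1})|\}
  \leq2\operatorname{cutw}_\pi(B).
\]
This is how the paper finishes the proof.
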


\begin{proof}
Order the fibres consecutively according to $\pi$, with an arbitrary order
inside each fibre. Across a cut between two fibres, every crossing base edge
contributes one adjacency block. Such a block has rank at most two because
adjacency depends only on equality of the two membership bits for that edge.
The cut rank is therefore at most
$2\operatorname{cutw}_{\pi}(B)$.

Consider a cut inside $M_U(v)$. Let $a$ and $b$ be the numbers of neighbours
of $v$ before and after $v$ in $\pi$. Let $c_0$ be the number of other base
edges crossing the cut immediately before $v$. Finally, let $s$ and $t$ be
the numbers of vertices of $M_U(v)$ on the two sides of the current cut.
The crossing matrix has three types of nonzero blocks. The other crossing
base edges contribute rank at most $2c_0$. The earlier-neighbour fibres to
the unplaced part of $M_U(v)$ contribute rank at most
$\min\{2a,t\}$. The placed part of $M_U(v)$ to the later-neighbour fibres
contributes rank at most $\min\{s,2b\}$. There is no block between the two
parts of $M_U(v)$ because this fibre is independent. Hence
\begin{equation}
  \rho\leq
  2c_0+\min\{2a,t\}+\min\{s,2b\}.
  \label{eq:cfi-block-layout-rank}
\end{equation}

If $v$ has degree one, its fibre has one vertex and there is no cut inside
it. If $v$ has degree two, then $s=t=1$ at the only internal cut and the
last two terms of \eqref{eq:cfi-block-layout-rank} sum to at most
$2\max\{a,b\}$. If $v$ has degree three, then $s+t=4$, so those terms sum
to at most $4\leq2\max\{a,b\}$. The base cuts immediately before and after
$v$ have sizes $c_0+a$ and $c_0+b$. Consequently,
\[
  \rho
  \leq2\bigl(c_0+\max\{a,b\}\bigr)
  =2\max\{c_0+a,c_0+b\}
  \leq2\operatorname{cutw}_{\pi}(B).
\]
This proves the claimed layout bound.
\end{proof}

The coefficient two cannot be improved for the class of layouts that keep
every fibre consecutive. If the crossing base edges at a cut form a
matching and all their endpoints have degree at least two, the CFI cut
matrix is a direct sum of equality blocks. Each relevant membership bit
assumes both values in its parity fibre, so every block has rank two. The
split-grid layout constructed in the proof of
Corollary~\ref{cor:linear-rank-width-asymptotic-range} has a cut after a
complete row with $n$ vertical matching edges and therefore cut rank $2n$.
No smaller leading coefficient can follow from a uniform per-edge estimate
for this block layout. A stronger bound would need additional global
structure, an interleaved layout, or a different construction.

\begin{proposition}[Rook and Shrikhande calibration]
\label{prop:rook-shrikhande-calibration}
For every $r\geq4$ and $s\geq5$,
\[
  D_{\mathrm{rw}}(r)\geq3,
  \qquad
  D_{\mathrm{lrw}}(s)\geq3.
\]
\end{proposition}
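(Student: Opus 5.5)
We use the classical pair formed by the $4\times4$ rook's graph $R=K_4\,\square\,K_4$ and the Shrikhande graph $S$. Both are strongly regular with parameters $(16,6,2,2)$ and are not isomorphic: the neighbourhood of a vertex of $R$ is $2K_3$, while in $S$ it is a $6$-cycle. Two-dimensional Weisfeiler-Leman stabilizes on both graphs at the coherent configuration of a rank-three association scheme with identical intersection numbers. Hence $R\simeq_2 S$ and $\dim_{\mathrm{WL}}(R)\geq3$. This step is standard and needs only the observation that the strongly regular colouring of ordered pairs (equal, adjacent, nonadjacent) is stable under the two-dimensional refinement with parameter-determined counts. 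It therefore suffices to bound the widths of one of the two graphs. The natural choice is $R$, which has a convenient algebraic structure: its adjacency is $I\otimes J'+J'\otimes I$ with $J'=J-I$ on $\mathbb F_2^4$.

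For the linear bound we take a row-by-row layout of $R$, with rows the copies of $K_4$. A prefix consisting of $i$ complete rows together with $s$ vertices of the next row has a cut matrix whose rows lie in the span of a few explicit vectors. A vertex $(a,b)$ has neighbourhood vector equal to the indicator of row $a$ plus the indicator of column $b$, minus the vertex itself. Restricted to the complement of the prefix, this spans at most the row indicators and column indicators that actually meet the complement. A direct count of these indicators gives rank at most $5$ at every cut. The worst cut occurs in the middle, after two rows: there the row indicators of the two remaining rows and the four column indicators restricted to them satisfy one linear relation, leaving rank $5$. This yields $\lrw(R)\leq5$, hence $D_{\mathrm{lrw}}(s)\geq3$ for all $s\geq5$ by monotonicity of the supremum in $s$.

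For rank width we use a balanced decomposition. The root splits into rows $\{1,2\}$ and $\{3,4\}$, each half splits into single rows, and each row of four vertices splits into pairs. The cut ranks to compute are as follows:
\begin{itemize}
\item Half of the graph (two rows) has rank $\leq 5$ by the computation above, which is too large. We therefore switch to a better scheme: split the vertex set into the blocks $\{1,2\}\times\{1,2\}$, $\{1,2\}\times\{3,4\}$, and so on.
\item A $2\times2$ block $Q$ has vectors that restrict on $\overline Q$ to the span of the two row indicators and two column indicators. This gives rank $\leq4$.
\item A pair of blocks sharing rows, such as rows $\{1,2\}$, has cut rank the same as a two-row half. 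That is $\leq 4$ after using the relation carefully: the rows of the complement contribute only column indicators, $4$ in total, with row indicators vanishing off the set.
\item Pairs inside a block and singletons have rank $\leq2$ trivially.
\end{itemize}
This gives $\rw(R)\leq4$ and hence $D_{\mathrm{rw}}(r)\geq3$ for $r\geq4$.

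The main obstacle is getting these $\mathbb F_2$ rank counts exactly right, in particular confirming $\leq4$ for the block-pair cut and $\leq5$ for every layout prefix. We plan to verify each cut by listing the restricted indicator vectors and exhibiting the linear dependences explicitly, using the supplementary deterministic verifier as a cross-check. If $R$ failed the linear bound, we would instead try the analogous layout for $S$, viewed as a Cayley graph on $\mathbb Z_4^2$.
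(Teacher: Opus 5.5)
Your proof is correct, and it reaches the width bounds by a different and more elementary route than the paper. The paper realizes the pair as $R=\operatorname{CFI}(K_4,\varnothing)$ and $S=\operatorname{CFI}(K_4,\{v\})$. It obtains $\rw\leq4$ for both graphs from its CFI endpoint-boundary lemma with $\ebw(K_4)=4$, and $\lrw\leq5$ for both from an explicit fibre-by-fibre layout whose sixteen prefix ranks are computed by Gaussian elimination on parity words. You bound only $R$. That suffices, because $D_{\mathrm{rw}}$ and $D_{\mathrm{lrw}}$ are suprema and $\dim_{\mathrm{WL}}(R)\geq3$ already follows from $R\simeq_2 S$. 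You also use the identity $N(a,b)=r_a+c_b$ over $\mathbb F_2$, which turns every cut rank into a count of the row and column indicators meeting both sides, so everything can be checked by hand. The paper's route, in exchange, reuses the CFI lemma it needs anyway for the asymptotic lower bounds, and it places both graphs of the pair in the two classes.

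Two intermediate claims should be corrected, though neither affects the conclusion. First, if $X$ is a union of two full rows, every $r_a$ with $a$ a row of $X$ vanishes on $\overline X$. The cut matrix therefore has the four rows $c_b|_{\overline X}$ and rank exactly $4$, not $5$. The rank-$5$ cuts of your row layout are the prefixes ending strictly inside row $2$ or row $3$, where one partial row indicator survives next to four column indicators. Second, it follows that your first balanced decomposition (two-row halves, then single rows, then pairs) already has width $4$, since a single row also has cut rank $4$. The switch to $2\times2$ blocks is therefore unnecessary, although it also works, each block having cut rank $3$. Your first bullet, which calls the two-row cut too large, contradicts your own third bullet and should be removed.
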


\begin{proof}
Let
\[
  R=\operatorname{CFI}(K_4,\varnothing),
  \qquad
  S=\operatorname{CFI}(K_4,\{v\}).
\]
Direct enumeration of the four parity words in each fibre identifies $R$
with $L(K_{4,4})$ and $S$ with
\[
  \operatorname{Cay}\bigl(\mathbb Z_4^2,
  \{\mathord{\pm}(1,0),\mathord{\pm}(0,1),
  \mathord{\pm}(1,1)\}\bigr).
\]
These are the $4$ by $4$ rook graph and the Shrikhande graph. Both are
strongly regular with parameters $(16,6,2,2)$. The diagonal, adjacent,
and nonadjacent ordered pairs therefore form the same stable
two-dimensional Weisfeiler-Leman colouring in both graphs. Yet the graphs
are nonisomorphic. Indeed, the neighbourhood of a vertex in $R$ is
$2K_3$, while the neighbourhood of a vertex in $S$ is $C_6$. Hence each
graph has Weisfeiler-Leman dimension at least three.

Every nonempty proper vertex cut of $K_4$ has all four base vertices as
endpoints of crossing edges. Hence $\ebw(K_4)=4$, and
Lemma~\ref{lem:cfi-endpoint-boundary-bound} gives
\[
  \rw(R),\rw(S)\leq4.
\]

Label the base vertices $v_1,v_2,v_3,v_4$, taking $v_1=v$ for $S$, and
order the fibres by this list. In the fibre at $v_i$, order the three
incident-edge bits by increasing other endpoint. With this coordinate
order, list an even fibre as
\[
  000,110,101,011
\]
and an odd fibre as
\[
  100,010,001,111.
\]
Put the four fibres consecutively, with the unique odd fibre first for $S$.
For a base edge $v_iv_j$, the block entry between parity words $s$ and $t$
is
\[
  1+s_{ij}+t_{ij}
\]
over $\mathbb F_2$. Thus each adjacency block is explicitly
$J+\sigma_{ij}\mathbf 1^{\mathsf T}
+\mathbf 1\tau_{ij}^{\mathsf T}$, restricted to the relevant parity
words. Gaussian elimination on these displayed parity-word lists gives the
following prefix ranks. Column $h$ means that $h$ vertices of the indicated
fibre have been placed, so the fourth column is the complete-fibre boundary.
\[
  \begin{array}{c|cccc}
    &h=1&h=2&h=3&h=4\\
    \hline
    v_1&1&2&3&3\\
    v_2&4&5&5&4\\
    v_3&5&4&4&3\\
    v_4&3&2&1&0
  \end{array}
\]
The same table holds for $R$ and $S$. It is an explicit width-five layout,
so $\lrw(R),\lrw(S)\leq5$. Monotonicity of the two extremal functions
proves the assertion. The bundled deterministic script
\texttt{verify\_calculations.py} reproduces the finite identifications, row
reductions, and prefix-rank table as a supplementary certificate. The proof
itself relies only on the explicit descriptions and certificates above.
\end{proof}

\begin{corollary}[Linear-rank-width asymptotic range]
\label{cor:linear-rank-width-asymptotic-range}
For every integer $r\geq2$,
\[
  \begin{cases}
    2,&2\leq r\leq4,\\
    \max\left\{3,\left\lfloor\dfrac r2\right\rfloor-1\right\},&r\geq5
  \end{cases}
  \leq D_{\mathrm{lrw}}(r)\leq2r+2.
\]
Consequently, $D_{\mathrm{lrw}}(r)=\Theta(r)$.
\end{corollary}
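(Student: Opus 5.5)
The upper bound $D_{\mathrm{lrw}}(r)\leq 2r+2$ is exactly Corollary~\ref{cor:linear-rank-width-bound}. The equivalence of the $2r+2$-dimensional bound with a $2r+3$-pebble win is not needed here. For the lower bounds I would use three ingredients. For $2\leq r\leq4$, the pair $C_6$ versus $2K_3$ from Corollary~\ref{cor:rank-width-two-interval} already gives $\dim_{\mathrm{WL}}(C_6)\geq2$, and that proof exhibits $\lrw(C_6)\leq2\leq r$. For $r\geq5$, the constant $3$ is Proposition~\ref{prop:rook-shrikhande-calibration} with $s=r$, combined with the monotonicity of $D_{\mathrm{lrw}}$. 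What remains is the linear term $\lfloor r/2\rfloor-1$. Once it is established, $\Theta(r)$ follows immediately from the two bounds.

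For the linear term I would take the split grid of Kiefer and Neuen \cite[Section~6.2]{KieferNeuen2022}. Let $n=\lfloor r/2\rfloor$, and let $B_n$ be the subcubic graph obtained from the $n\times n$ grid by splitting each degree-four vertex into two adjacent degree-three vertices, one carrying the horizontal edges and one carrying the vertical edges. Its CFI pair $\operatorname{CFI}(B_n,\varnothing)$ and $\operatorname{CFI}(B_n,\{v\})$ is nonisomorphic because $B_n$ is connected. By the standard CFI/cops-and-robber argument, Duplicator wins the bijective game with about $\operatorname{tw}(B_n)+1$ pebbles. Since $B_n$ contracts to the $n\times n$ grid, $\operatorname{tw}(B_n)\geq n$ by \cite[Section~13.2]{Bodlaender1998}. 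I would cite Neuen's uncoloured form \cite{Neuen2024Homomorphism} to obtain $\dim_{\mathrm{WL}}\geq n-1$, or sharper, for the uncoloured CFI graphs, being careful about the off-by-one between tree width, pebble count, and dimension.

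For the width bound, I would apply Lemma~\ref{lem:cfi-cutwidth-bound} with the row-by-row layout $\pi$ of $B_n$, listing the two split copies of each grid vertex consecutively. A cut after a complete row is crossed by $n$ vertical edges. A cut in the middle of a row is crossed by at most $n$ vertical edges plus one horizontal or split edge, which can be avoided by placing the vertical-carrying copy first. The goal is $\operatorname{cutw}_\pi(B_n)\leq n$, so that $\lrw\leq 2n\leq r$. If the count comes out as $n+1$, I would shrink the grid to $(n-1)\times(n-1)$, which is why the statement carries the $-1$.

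The main obstacle is the bookkeeping in this middle step. The cutwidth of the split-grid layout must not exceed $n$, and at the same time the CFI lower bound has to give dimension at least $n-1$ rather than something like $n-2$. I would check small $n$ by hand and choose the grid size so that both inequalities hold simultaneously.
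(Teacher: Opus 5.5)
\textbf{Gap in the linear term.} Your overall architecture matches the paper's: the upper bound from Corollary~\ref{cor:linear-rank-width-bound}, $C_6$ for the constant $2$, the rook/Shrikhande pair for $3$, and a CFI pair over a split grid bounded through Lemma~\ref{lem:cfi-cutwidth-bound}. However, the quantitative step that should produce $\lfloor r/2\rfloor-1$ fails as you set it up.

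First, the target $\operatorname{cutw}_\pi(B_n)\leq n$ cannot be reached by any row-by-row layout that keeps split copies consecutive. At the cut between two complete grid positions in an interior row, $n$ column edges and one row edge cross, so the count is $n+1$ however the vertices are split.

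Second, your particular split makes the pair-internal cuts worse, not better. Let $v_h$ carry left/right and $v_v$ carry up/down. If you place $v_v$ first, the upper edge closes, but the lower edge and the split edge open, and the row edge into $v_h$ still crosses. This gives $n+2$ crossing edges, and placing $v_h$ first is symmetric. With this split, your fallback to the $(n-1)\times(n-1)$ grid, with $n=\lfloor r/2\rfloor$, gives through Lemma~\ref{lem:cfi-cutwidth-bound} only
\[
  \lrw\leq 2(n+1)=2\lfloor r/2\rfloor+2>r .
\]
So those CFI graphs are not admissible sources for $D_{\mathrm{lrw}}(r)$. Repairing it by shrinking to $(n-2)\times(n-2)$ yields only $\lfloor r/2\rfloor-2$.

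\textbf{The missing idea.} What you need is the orientation of the Kiefer--Neuen split used in the paper. The copy $v^-$ receives the edges going up and left, and $v^+$ receives the edges going down and right; place them consecutively as $v^-,v^+$. Placing $v^-$ then closes both incoming grid edges and opens only the split edge, so the pair-internal cut drops to $n$. The row-major layout therefore has cutwidth at most $n+1$, and Lemma~\ref{lem:cfi-cutwidth-bound} gives $\lrw(\operatorname{CFI}(B_n,U))\leq 2n+2$. One then takes $n=\lfloor r/2\rfloor-1$. This requires $r\geq6$ so that $n\geq2$; at $r=5$ the linear term equals $1$ and is dominated by $3$.

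This choice reaches $\lfloor r/2\rfloor-1$ only because the CFI side loses nothing. The bound $\operatorname{tw}(B_n)\geq n$ means that $(n-1)$-dimensional Weisfeiler-Leman fails on the pair, so $\dim_{\mathrm{WL}}\geq n$, not merely $n-1$. Your hedged estimate $\dim_{\mathrm{WL}}\geq n-1$ would cost another unit even with the correct split. Both counts must therefore be exact, and checking small grids by hand does not substitute for choosing the right split orientation.
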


\begin{proof}
The upper bound is Corollary~\ref{cor:linear-rank-width-bound}. The cycle
$C_6$ used above has linear rank width at most two and Weisfeiler-Leman
dimension at least two. It supplies the constant lower term for every
$r\geq2$. Proposition~\ref{prop:rook-shrikhande-calibration} supplies the
lower term three for $r\geq5$.

Recall that a tree decomposition consists of vertex bags indexed by a tree.
Their union is the vertex set, every edge has both endpoints in one bag, and
the bags containing a fixed vertex induce a connected subtree. Its width is
the largest bag size minus one, and $\operatorname{tw}(G)$ is the minimum
such width.

For $n\geq2$, start with the $n$ by $n$ grid $Q_n$. Replace every interior
vertex $v$ by adjacent vertices $v^-$ and $v^+$. Attach the edges leading
up and left to $v^-$, and attach the edges leading down and right to $v^+$.
Leave the boundary vertices unchanged. Denote the resulting split grid by
$B_n$. This is the subcubic split grid used by Kiefer and Neuen
\cite[Section~6.2]{KieferNeuen2022}. It is connected, and contracting all
edges $v^-v^+$ recovers $Q_n$. The standard grid equality
$\operatorname{tw}(Q_n)=n$ \cite[Section~13.2]{Bodlaender1998} and minor
monotonicity give $\operatorname{tw}(B_n)\geq n$.

Order the rows of $Q_n$ from top to bottom and each row from left to right.
After $s$ vertices of a row, there are $n-s$ crossing vertical edges from
the preceding row, $s$ crossing vertical edges to the next row, and at most
one horizontal crossing edge. Thus this order has cutwidth at most $n+1$.
Expand each split interior vertex as the consecutive pair $v^-,v^+$. At
cuts before and after the pair, the crossing-edge count is unchanged. At
the cut inside the pair, the two incoming grid edges have closed and only
the edge $v^-v^+$ has opened, so the count decreases by one. Therefore this
layout of $B_n$ has cutwidth at most $n+1$.

The same order has at most $2n$ endpoints of crossing edges at every cut.
In an interior row, the $n$ vertical crossing edges have $2n$ endpoints,
and the endpoints of a horizontal crossing edge are already among them.
In the first or last row there are fewer vertical crossing edges, and a
horizontal edge adds at most one new endpoint. At the cut inside a split
pair, the two closed incoming edges are replaced by the internal edge, so
the number of crossing endpoints does not increase. A caterpillar on this
order also displays singleton cuts, each with at most four crossing endpoints.
Consequently,
\[
  \ebw(B_n)\leq2n.
\]
Lemma~\ref{lem:cfi-endpoint-boundary-bound} gives
\begin{equation}
  \rw\bigl(\operatorname{CFI}(B_n,U)\bigr)\leq2n
  \label{eq:split-grid-cfi-rank-width}
\end{equation}
for every $U\subseteq V(B_n)$.

Lemma~\ref{lem:cfi-cutwidth-bound} gives
\begin{equation}
  \lrw\bigl(\operatorname{CFI}(B_n,U)\bigr)\leq2n+2
  \label{eq:split-grid-cfi-linear-rank-width}
\end{equation}
for every $U\subseteq V(B_n)$.

Fix $x\in V(B_n)$. Neuen's nonisomorphism lemma shows that
$\operatorname{CFI}(B_n,\varnothing)$ and
$\operatorname{CFI}(B_n,\{x\})$ are nonisomorphic
\cite[Lemma~5]{Neuen2024Homomorphism}. Since
$\operatorname{tw}(B_n)\geq n$, his pebble-game lemma and the standard
homomorphism characterization of Weisfeiler-Leman show that the pair is not
distinguished by $(n-1)$-dimensional Weisfeiler-Leman
\cite[Lemma~12 and Theorems~10 and~14]{Neuen2024Homomorphism}, where
Theorem~10 is applied with parameter $n-1$. Thus each graph in the pair has
Weisfeiler-Leman dimension at least $n$.

For $r\geq6$, take $n=\lfloor r/2\rfloor-1$. Then $n\geq2$ and
$2n+2\leq r$. Equation~\eqref{eq:split-grid-cfi-linear-rank-width} gives an
admissible graph of dimension at least
$\lfloor r/2\rfloor-1$. Together with the $C_6$ baseline, this proves the
lower bound for every $r\geq2$.
\end{proof}

\begin{corollary}[Rank-width asymptotic range]
\label{cor:rank-width-asymptotic-range}
For every integer $r\geq2$,
\[
  \begin{cases}
    2,&2\leq r\leq3,\\
    \max\left\{3,\left\lfloor\dfrac r2\right\rfloor\right\},&r\geq4
  \end{cases}
  \leq D_{\mathrm{rw}}(r)\leq3r+1.
\]
Consequently, $D_{\mathrm{rw}}(r)=\Theta(r)$.
\end{corollary}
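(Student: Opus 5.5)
The upper bound $D_{\mathrm{rw}}(r)\leq3r+1$ follows from Corollary~\ref{cor:rank-width-bound} with $k=r\geq1$. For the constant lower terms, the cycle $C_6$ has $\rw(C_6)\leq2$ and Weisfeiler-Leman dimension at least two, as shown in Corollary~\ref{cor:rank-width-two-interval}. Monotonicity of $D_{\mathrm{rw}}$ in $r$ therefore gives the value $2$ for all $r\geq2$. For $r\geq4$, Proposition~\ref{prop:rook-shrikhande-calibration} supplies the term $3$.

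For the linear term I would reuse the split-grid CFI pair from the proof of Corollary~\ref{cor:linear-rank-width-asymptotic-range}, but with the rank-width estimate \eqref{eq:split-grid-cfi-rank-width} in place of the linear one. For $n\geq2$, that estimate gives $\rw(\operatorname{CFI}(B_n,U))\leq2n$ for every $U$. Neuen's lemmas, as cited there, show that $\operatorname{CFI}(B_n,\varnothing)$ and $\operatorname{CFI}(B_n,\{x\})$ are nonisomorphic and are not distinguished by $(n-1)$-dimensional Weisfeiler-Leman. Hence each has dimension at least $n$. Take $n=\lfloor r/2\rfloor$. Then $2n\leq r$, so the graph is admissible, and $n\geq2$ holds exactly when $r\geq4$. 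This gives $D_{\mathrm{rw}}(r)\geq\lfloor r/2\rfloor$ for $r\geq4$. Combined with the term $3$, it yields the displayed maximum.

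The step needing care is confirming that $\ebw(B_n)\leq2n$ holds already for $n=2$, and that the bound includes the singleton leaf cuts. A singleton leaf cut has at most four crossing endpoints, and $4\leq2n$ for $n\geq2$, so these cuts are fine. Both facts were checked inside the linear-rank-width corollary, so I would cite that verification rather than repeat it.

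Finally, the $\Theta(r)$ statement follows because both bounds are linear in $r$.
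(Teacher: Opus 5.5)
Your proposal is correct and follows essentially the same route as the paper. It takes the upper bound from Corollary~\ref{cor:rank-width-bound}, the $C_6$ baseline for the term two, the rook and Shrikhande pair for the term three when $r\geq4$, and the split-grid CFI pair with $n=\lfloor r/2\rfloor$, using \eqref{eq:split-grid-cfi-rank-width} for admissibility. Your added checks (that $n\geq2$ exactly when $r\geq4$, and that singleton leaf cuts have at most $4\leq2n$ crossing endpoints) are sound, and the paper leaves them implicit.
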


\begin{proof}
The upper bound is Corollary~\ref{cor:rank-width-bound}. The cycle $C_6$
gives the lower term two. For $r\geq4$, put $n=\lfloor r/2\rfloor$.
The CFI pair used in the proof of
Corollary~\ref{cor:linear-rank-width-asymptotic-range} has
Weisfeiler-Leman dimension at least $n$, while
\eqref{eq:split-grid-cfi-rank-width} gives rank width at most
$2n\leq r$. Proposition~\ref{prop:rook-shrikhande-calibration} supplies
the lower term three.
\end{proof}

\begin{corollary}[Algorithmic consequences]
\label{cor:algorithmic-consequences}
For every fixed integer $k\geq1$, isomorphism of $n$-vertex graphs of rank
width at most $k$ can be decided in time
\[
  O\bigl(n^{3k+2}\log n\bigr),
\]
and the class admits canonization in time
\[
  O\bigl(n^{3k+4}\log n\bigr).
\]
For graphs of linear rank width at most $k$, the corresponding bounds are
\[
  O(n^{2k+3}\log n)
  \qquad\text{and}\qquad
  O(n^{2k+5}\log n).
\]
\end{corollary}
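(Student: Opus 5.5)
The isomorphism bounds come from running $d$-dimensional Weisfeiler-Leman with $d=3k+1$ (respectively $d=2k+2$), using the standard implementation of Immerman and Lander. One refinement round on $d$-tuples costs $O(n^{d+1}\log n)$ time, counted over all the tuples and their $n$ possible extensions. The total over all rounds until stabilization is also $O(n^{d+1}\log n)$, since the partition of $V^d$ can be refined at most $n^d$ times and a Hopcroft-style refinement amortizes the work. Substituting $d=3k+1$ gives $O(n^{3k+2}\log n)$, and $d=2k+2$ gives $O(n^{2k+3}\log n)$.

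Correctness comes from Corollaries~\ref{cor:rank-width-bound} and~\ref{cor:linear-rank-width-bound}. Given two $n$-vertex inputs $G,H$ with $G$ in the class, run the algorithm on the disjoint union, or compare the stable histograms. The histograms differ exactly when $G\not\simeq_d H$. Identification then gives the equivalence $G\cong H$ iff $G\simeq_d H$. We never need to compute a decomposition: the bound concerns the existence of $\mathcal D$ for the source only, and Theorem~\ref{thm:main} places no requirement on the target. Whether the class membership of the input is promised or must be tested separately needs a remark. The natural reading is the promise version, since the statement concerns graphs of rank width at most $k$. If membership had to be tested, the known FPT rank-width recognition algorithms run in time $O(n^3)$ for fixed $k$, which is dominated.

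For canonization I would use the standard individualization-refinement reduction, relying on Remark~\ref{rem:uniform-initial-colours}. Because the bound holds uniformly for arbitrary initial colourings, every graph obtained from a class member by individualizing vertices with fresh colours is still identified by $d$-WL. The procedure is as follows. Take the $d$-WL stable colouring, pick the smallest nonsingleton vertex colour class, and individualize each of its vertices in turn. Compare the resulting stable colourings canonically and keep the lexicographically minimal branch. Any two choices inside one class are then related by an automorphism, because identification under colours makes $d$-WL-equivalent individualizations isomorphic. Hence exploring a single representative suffices, and the search is a descent of at most $n$ steps, each needing $O(n)$ WL computations. Naively this costs a factor of $n^2$ over the isomorphism test, giving $O(n^{d+3}\log n)$, which yields $O(n^{3k+4}\log n)$ and $O(n^{2k+5}\log n)$.

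\emph{Main obstacle.} The hardest step is getting exactly this $n^2$ overhead rather than more. I would cite the known reduction from identification to canonization, for example Gurevich's canonization argument or the formulation in Grohe and Neuen's canonization corollary, taking care that it needs only the same dimension $d$ and uses individualization with colours. That is precisely why the uniform-colour remark is required: the colour classes of individualized vertices must remain within the class to which the theorem applies, and rank width is taken on the uncoloured graph, so individualization does not change it. I would also check that the representative choice is canonical, meaning the branch is selected by the minimal colour code and not by vertex names.
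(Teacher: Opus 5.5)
Your proposal is correct and follows the paper's proof. It runs $d$-dimensional Weisfeiler--Leman in the Immerman--Lander $O(n^{d+1}\log n)$ implementation, with $d=3k+1$ or $d=2k+2$ taken from Corollaries~\ref{cor:rank-width-bound} and~\ref{cor:linear-rank-width-bound}, and it obtains canonization from the generic identification-to-canonization construction of Grohe and Neuen in $O(n^{d+3}\log n)$ time, which Remark~\ref{rem:uniform-initial-colours} makes applicable. One point of phrasing: the automorphism argument applies to vertices whose individualized stable colourings coincide, which your lexicographic selection guarantees, and not merely to vertices sharing a $d$-WL colour class.
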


\begin{proof}
The standard implementation of $d$-dimensional Weisfeiler-Leman runs in
$O(n^{d+1}\log n)$ time \cite{ImmermanLander1990}. Apply
Corollaries~\ref{cor:rank-width-bound} and
\ref{cor:linear-rank-width-bound} with $d=3k+1$ and $d=2k+2$,
respectively. The proof applies to arbitrary finite initial vertex
colourings by Remark~\ref{rem:uniform-initial-colours}, and
the dimension bound is independent of the number of initial colour classes.
The generic canonization construction
\cite[Theorem~2.7]{GroheNeuen2023} runs in time
$O(n^{d+3}\log n)$, which gives the other two bounds.
\end{proof}

\section{Conclusion}
\label{sec:conclusion}

Theorem~\ref{thm:main} gives dimensions $3k+1$ for rank width at most $k$
and $2k+2$ for linear rank width at most $k$. It improves the unrestricted
$3k+3$ estimate of Levet, Rombach, and Sieger by two dimensions
\cite[Corollary~3.12]{LevetRombachSieger2024Full}, and the published $3k+4$
bound of Grohe and Neuen by three dimensions \cite{GroheNeuen2023}. The proof
retains the three incident cut ranks, synchronizes the child transition with
two remote keys, and requires no decomposition of the target graph.

The optimized profile records information not captured by rank width or
linear rank width alone. Its coordinates obey exact maximum identities and
remain fixed while linear rank width grows arbitrarily. At rank width two the
only pairs are $(5,6)$ and $(6,7)$. The parity-fibre CFI construction proves
$D_{\mathrm{lrw}}(r)=\Theta(r)$ and gives a linear lower bound for
$D_{\mathrm{rw}}(r)$, with the rook and Shrikhande graphs providing the
small-parameter calibration. This links the width-bounded setting to broader
CFI and homomorphism methods
\cite{Roberson2026,GroheLichterNeuenSchweitzer2025}.

The result concerns dimension under unrestricted stabilization. It does not
bound quantifier depth or produce fixed-point canonization. The bounded-round
result of Levet, Rombach, and Sieger is complementary
\cite{LevetRombachSieger2024}. The exact value of $D_2$ remains open. Within
the present method, the obstruction to coefficient two is a fork with three
large incident cut ranks. Further progress may require encoding one interface
in a synchronized palette before releasing its physical anchors, narrowing
the constant-factor gaps in the CFI bounds, or combining low dimension with
bounded refinement depth.

\section*{Reproducibility materials}

\ifdefined\WLAncillaryMaterials
The ancillary files contain \texttt{verify\_calculations.py} and instructions
for running it. The script verifies the finite enumerations and calibration
examples used in this paper, but does not machine-check the general proofs.
\else
The supplementary archive contains source files, build instructions, and
scripts that verify the finite enumerations and calibration examples used in
this paper. The scripts do not machine-check the general proofs.
\fi

\bibliographystyle{plainnat}
\bibliography{mybib}

\end{document}